\documentclass[12pt]{amsart}
\usepackage{amssymb, latexsym, mathrsfs, color, tikz, multirow,mathtools,xcolor,enumerate,caption, subcaption, graphicx,xcolor,comment,multicol}
\usepackage{graphics}
\graphicspath{ {./Diagrams/} }

\input{xy}
\xyoption{all}

\usepackage[colorlinks=true, pdfstartview=FitV,
 linkcolor=blue,citecolor=blue,urlcolor=blue]{hyperref}
\usetikzlibrary{shapes,snakes}

\setlength{\textwidth}{460pt} \setlength{\hoffset}{-45pt}
\usepackage{ytableau} 

\numberwithin{equation}{section}
\theoremstyle{plain}
\newtheorem{theorem}[equation]{Theorem}
\newtheorem{proposition}[equation]{Proposition}
\newtheorem{corollary}[equation]{Corollary}
\newtheorem{lemma}[equation]{Lemma}

\newtheorem {question}[equation]{Question}

\newenvironment{customthm}[1]
  {\innercustomthm}
  {\endinnercustomthm}

\newenvironment{customcor}[1]
  {\innercustomcor}
  {\endinnercustomthm}

\newenvironment{customlem}[1]
  {\innercustomlem}
  {\endinnercustomthm}

\newenvironment{customprop}[1]
  {\innercustomprop}
  {\endinnercustomprop}

\theoremstyle{definition}
\newtheorem{definition}[equation]{Definition}
\newtheorem{example}[equation]{Example}
\newtheorem{remark}[equation]{Remark}

\def\N{\mathbb N}

\keywords{dice relabeling, cyclotomic polynomials, generating functions}

\subjclass{05A15,05A19}


\title{Revisiting Dice Relabeling Using Cyclotomic Polynomials}

\author{Yikai Chao, Josh Gabel, Carlye Larson, George D.  Nasr}

\address{Department of Mathematics, Augustana University} \email{george.nasr@augie.edu}

\begin{document}
\maketitle

\begin{abstract}
We continue the exploration of a question of dice relabeling posed by Gallian and Rusin: Given $n$ dice, each labeled $1$ through $m$, how many ways are there to relabel the dice without changing the frequencies of the possible sums? We answer this question in the case where $n=2$ and $m$ is a product of three prime numbers. We also explore more general questions. We find a method for decomposing two $m$-sided dice into two dice of different sizes and give some preliminary results on relabeling two dice of different sizes. Finally, we refine a result of the aforementioned authors in the case where $m$ is a prime power.
\end{abstract}

\section{Introduction and Summary of Results}

George Sicherman posed and solved the following question.

\begin{question}
 How many ways can one label two six-sided dice so that the frequency of all possible sums remain the same as if they were both labeled $1$ through $6$?
\end{question}

Sicherman found that the answer was two. Either one can use the usual labeling on both dice (often called the ``standard" solution), or one can label one dice $1,2,2,3,3,4$ and the other dice $1,3,4,5,6,8$. This result was discussed further and reported by Martin Gardner \cite{gardner}. Inspired by this work, Broline explored this question for an arbitrary number of platonic solids \cite{b}. Gallian and Rusin addressed the more general question \cite{gc}.

\begin{question}\label{qu:gen}
  Given $n$ dice, each with labels $1$ through $m$, how many way can these dice be relabeled without altering the frequencies of the sums? 
\end{question}
 By encoding the data of the frequencies and the labels on the dice as a generating function, Broline, Gallian, and Rusin observed that one could factor the generating function encoding the frequencies using cyclotomic polynomials (a method we go into further detail on later). Using this technique, Gallian and Rusin were able to demonstrate that for any number of dice, there are three possible dice that could be used to answer Question \ref{qu:gen} if $m$, the number of sides, is a product of two (not necessarily distinct) prime numbers \cite[Theorem 2]{gc}. They additionally have results for when $m$ is a prime power and many other related questions to relabeling dice. 

Many different types of results involving dice relabeling followed. In \cite{sm}, they enumerate the frequency of a particular sum given $n$ $m$-sided dice. In \cite{fs}, they characterize the numbers that can be realized as the sums of relabeled six-sided dice. Other papers explored changing the probabilities of the sums from the usual one given by $n$ $m$-sided dice. For instance, authors of \cite{bmrs,lr,bs,m} explored different questions assuming ``equally likely sums", that is, all sums are equally likely. In \cite{rss}, they consider ``Pythagorean dice" which provide an alternative probability distribution on the possible sums. 

However, there remains many generalizations of Gallian's and Rusin's results involving the case where we use the original probabilities. Indeed, at the end of their paper, Gallian and Rusin leave the readers with two different further explorations of their ideas.
\begin{question}\label{qu:three}
How many relabeling are there in the case where $m=p^2q$ or $m=pqr$ (where $p,q,r$ are distinct)? 
\end{question}
\begin{question} \label{qu:dif_size}
Can one find dice, not necessarily with the same number of sides, matching the frequencies of $n$ $m$-sided dice? 
\end{question}

In this paper, following the techniques of Gallian and Rusin, we address both these questions in the case where the number of dice, $n$, is two. Our paper is organized as follows. In Section \ref{sec:term}, we go over terminology for this paper. In Section \ref{sec:cyc}, we go over how we use generating functions and cyclotomic polynomials to reframe the above questions. In this section, we also include cyclotomic polynomial identities, proving anything we did not readily find in the literature. In section \ref{sec:p2q} and section \ref{sec:pqr}, we address Question \ref{qu:three}, separately dealing with the two cases. In particular, we show the following.

\begin{customthm}{\ref{thm:p2q}}
Let $p$ and $q$ be distinct prime numbers. There are $8$ pairs of dice of size $p^2q$ whose frequencies of possible sums agrees with that of two $p^2q$ dice labeled $1$ through $p^2q$.
\end{customthm}

\begin{customthm}{\ref{thm:pqr}}
Let $p$, $q$, and $r$ be distinct prime numbers. There are $13$ pairs of dice of size $pqr$ whose frequencies of possible sums agrees with that of two $pqr$ dice labeled $1$ through $pqr$.
\end{customthm}
\noindent As an application of our results, we provide explicit lists of all possible labelings for the smallest cases of both of these results in their respective section. From here, we explore types of generalizations of the question explored in \cite{gc}. In section \ref{sec:new_sizes}, we give our first approach to answering Question \ref{qu:dif_size} by providing a way to take two standard $m$-sided dice, and achieve the same frequencies with two dice of sizes $a$ and $b$ where $m=ab$. Specifically, we demonstrate the following result, which is a Corollary to a result in Section \ref{sec:new_sizes}.
\begin{customcor}{\ref{cor:carlye}}
Let $m=ab$ with $a$ and $b$ non-negative integers. 
Consider a dice size $ab^2$ who labels come from $\{1,2,\dots, 2m-a\}$ in the following way:
\begin{enumerate}
\item the numbers $(i-1)a+1,(i-1)a+2,\dots, ia, 2m-(i+1)a+1, 2m-(i+1)a+2,\dots, 2m-ai$ each appear $i$ times on the dice for $1\leq i\leq b-1$; and 
\item the numbers $m-a+1,m-a+2,\dots, m$ appear $b$ times on the dice.
\end{enumerate}
This dice, along with a $a$-sided dice with labels $1$ through $a$, has the same frequencies of sums as two $m$-sided dice labeled $1$ through $m$.
\end{customcor}
\noindent We will also discuss a surprising combinatorial connection to triangular numbers. In section \ref{sec:dif_sizes}, we explore the following question.
\begin{question}\label{qu:dif_dice_size}
Given an $m_1$-sided dice, labeled 1 through $m_1$, and an $m_2$-sided dice, labeled $1$ through $m_2$, how many ways can one relabel both dice while not changing the frequencies of the sum?
\end{question} 

\noindent We report preliminary results on this direction, namely, the following. 
 \begin{customprop}{\ref{lem:prime}}
The answer to Question \ref{qu:dif_dice_size} is none when $m_1$ and $m_2$ are distinct prime numbers.
 \end{customprop}
 \begin{customlem}{\ref{lem:prime_power}}
Suppose we have a $p$-sided dice with labels $1$ through $p$ and a $p^k$-sided dice with labels $1$ through $p^k$. There are $k$ ways to relabel these dice without changing the frequencies of the possible sums.
 \end{customlem}
  \noindent Finally, in section \ref{sec:prime_power}, we refine the case of prime power sized dice addressed in \cite[Theorem 10]{gc}. 

\subsection*{Acknowledgments}

We thank Shriram Maiya (Augustana University) for motivating the idea of writing rational functions as products of series and Lemma \ref{lem:shriram}. We additionally thank Augustana University for providing us with the Froiland Research Grant to make this project possible. We thank David Rusin for communicating a counter example to a conjecture in the original version of this paper. We finally thank the anonymous referee who took the time to review the paper and provide us with useful feedback. 

\section{Terminology}\label{sec:term}

We follow the primary terminology set up by \cite{gc}, namely: 
\begin{definition}\leavevmode
\begin{itemize}
\item The dice labeled $1$ through $m$ is called a \textit{standard dice}. 
\item A dice with $m$ sides has \textit{size $m$}.
\item Given a set of $n$ dice which have the same frequencies of sums of $n$ standard $m$-sided dice, any one of these dice is called a \textit{solution}.
\end{itemize}
\end{definition}

Thus, Gallian and Rusin showed that when $m$ is a product of two (not necessarily distinct) prime numbers, there are three solutions. When $n=2$, this gives rise to two possible pairs: two standard dice and the two other dice guaranteed by their result. 

\begin{remark}
Technically, when \cite{gc} defines ``solution", they require the corresponding dice have size $m$. We remove this condition as our results in Sections \ref{sec:new_sizes} and \ref{sec:dif_sizes} allow for dice to have different sizes.
\end{remark}

\section{Using Generating Functions and Cyclotomic Polynomials} \label{sec:cyc}

Question \ref{qu:gen} can be reframed in the following way: How many collections of polynomials $P_1,P_2,\dots, P_n$, all with non-negative integer coefficients, are there so that  $P_i(1)=m$ for all $i$ and
\[P_1P_2\cdots P_n=\left( \sum_{i=1}^m x^i \right)^n=x^n\left( {x^m-1\over x-1}\right)^n?\]

Here, each polynomial $P_i$ is the generating function of a given dice. That is, $[x^j]P_i$, the $j$th coefficient of $P_i$, is the number of sides of the $i$th dice labeled $j$. Thus, $\displaystyle \sum_{i=1}^m x^i$ is the generating function for the standard dice of size $m$. Thus, if we denote the product of the $P_i$'s as $F(x)$, then $[x^j]F$ is the number of ways which a sum of $j$ can be achieved from the $n$ dice corresponding to the polynomials $P_1,P_2,\dots, P_n$. We refer to $F$ as the \textit{frequency polynomial}. Throughout this paper, we will see different formulations of $F$ depending on the size of the dice involved. Regardless of the explicit formulation of $F$, observe this gives a bijection between collections of dice answering our question and factorizations of $F$ with $P_i(1)=m$ and $[x^j]P_i\geq 0$ for all $i$ and $j$. To this end, we also refer to each $P_i$ as \textit{solution}, keeping in mind we really are referring to the dice that $P_i$ represents.

Observe that solving this question relies on understanding the factorizations of $x^m-1$. The factors of this polynomial is completely understood, as its irreducible factors are the \textit{cyclotomic polynomials}. Results on these polynomials are well understood and we will survey some of what is known here, starting with the definitions.

\begin{definition}
Let $m$ be a non-negative integer. 
\begin{itemize}
\item A (complex-valued) solution to $x^m=1$ is called an \textit{$m$th root of unity}.
\item A \textit{primitive $m$th root of unity} is an $m$th root of unity which is not a $j$th root of unity for any $j<m$.
\item The \textit{cyclotomic polynomial} $\phi_m(x)$ is the polynomial whose roots are the primitive $m$th roots of unity.
\end{itemize}
\end{definition}

The following known identities or readily computed by identities that we will use throughout our paper. In what follows, $\mu(n)$ is the \emph{M\"obius function}, which is $1$ when $n$ is square-free with an even number of prime numbers, $-1$ when $n$ is square-free with an odd number of prime numbers, and $0$ otherwise. Additionally, throughout, we assume $p,q,r$ are a prime numbers. 

\begin{align}
x^n-1 &=\prod_{d|n}\phi_d(x)\label{eq:prod_cyc}\\
\phi_n(x) &=\prod_{d|n}(x^d-1)^{\mu(n/d)}\label{eq:prod_roots}\\
\phi_p(x)&=\sum_{i=0}^{p-1}x^i={x^p-1\over x-1}\label{eq:cyc_prime}\\
\phi_{p^km}&=\phi_{pm}(x^{p^{k-1}}), \text{where $m$ is relatively prime to $p$.}\label{eq:cyc_prime_power_mult}\\
\phi_m\phi_{pm} &=\phi_m(x^p), \text{where prime $p$ does not divide $m$.}\label{eq:cyc_gen}\\
\phi_{n}(1)&=\begin{cases}p & n=p^k\\1 & \text{otherwise} \end{cases}\label{eq:eval_prime_power}\\
\phi_{p^kq}(x) &= \frac{(x^{p^{k-1}}-1)(x^{p^kq}-1)} {(x^{p^k}-1)(x^{p^{k-1}q}-1)}\\
\phi_{pqr}(x)&=\frac{(x^p-1)(x^q-1)(x^r-1)(x^{p q r}-1)}{(x-1)(x^{p q}-1)(x^{pr}-1)(x^{q r}-1)}
\end{align}

\begin{remark}
One should note that the factors of the form $x^i-1$ could all be written as $1-x^i$ and the results, as far as this paper is concerned, will remain unchanged. In future sections, we will often write rational functions in both ways, depending on whatever is convenient for our proofs.
\end{remark}

We will state and prove some additional identities which we will use but could not find referenced anywhere. 

\begin{lemma}
If $p$ and $q$ are prime numbers, then for all integers $k\geq 0$,
\[ \prod_{i=0}^k \phi_{p^iq}(x)=\phi_q(x^{p^k})\]\label{lem:product_piq}
\end{lemma}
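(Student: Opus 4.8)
The plan is to induct on $k$, with everything reducing to the two prime-power identities \eqref{eq:cyc_prime_power_mult} and \eqref{eq:cyc_gen}. (Note these require $p\neq q$, so that $p\nmid q$; I take the two primes to be distinct.)

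First I would dispatch the base case $k=0$: the left-hand side is the single factor $\phi_q(x)$ and the right-hand side is $\phi_q(x^{p^0})=\phi_q(x)$, so there is nothing to prove. For the inductive step, assuming $\prod_{i=0}^{k}\phi_{p^iq}(x)=\phi_q(x^{p^k})$, I would peel off the top factor to get
\[
\prod_{i=0}^{k+1}\phi_{p^iq}(x)=\phi_q(x^{p^k})\,\phi_{p^{k+1}q}(x),
\]
then rewrite the extra factor via \eqref{eq:cyc_prime_power_mult} (with $m=q$ and exponent $k+1$) as $\phi_{p^{k+1}q}(x)=\phi_{pq}\!\left(x^{p^k}\right)$. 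After the substitution $y=x^{p^k}$ this product is $\phi_q(y)\,\phi_{pq}(y)$, and \eqref{eq:cyc_gen} (applicable since $p\nmid q$) collapses it to $\phi_q(y^p)=\phi_q\!\left(x^{p^{k+1}}\right)$, finishing the induction.

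I do not expect a real obstacle: the argument is mostly bookkeeping, and the one thing to watch is that \eqref{eq:cyc_prime_power_mult} is exactly what turns $\phi_{p^{k+1}q}(x)$ into a polynomial in $x^{p^k}$, which is what lets \eqref{eq:cyc_gen} finish the job after the change of variables. As a cross-check --- or an alternative proof --- one can compare degrees and roots: $\phi_q(x^{p^k})$ is monic of degree $(q-1)p^k$, which matches $\sum_{i=0}^{k}\deg\phi_{p^iq}$; it is separable because $\phi_q$ is; and its roots are precisely the primitive $p^iq$-th roots of unity for $0\le i\le k$, i.e.\ exactly the roots of $\prod_{i=0}^{k}\phi_{p^iq}(x)$. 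The inductive proof is cleaner and avoids any separability bookkeeping, so I would present that.
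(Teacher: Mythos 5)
Your proof is correct and is essentially the same induction as the paper's: both use the base case $k=0$, peel off the last factor, rewrite it via identity \eqref{eq:cyc_prime_power_mult} as a cyclotomic polynomial in $x^{p^{k-1}}$ (equivalently $x^{p^k}$ in your shifted indexing), and then apply \eqref{eq:cyc_gen} to collapse the product; the paper phrases the last step as writing $\phi_{p^kq}(x)$ as the quotient $\phi_q(x^{p^k})/\phi_q(x^{p^{k-1}})$ and cancelling, while you substitute $y=x^{p^k}$ and apply \eqref{eq:cyc_gen} directly, which is the same computation. One thing you caught that the paper's statement glosses over: the identity genuinely requires $p\neq q$ (for $p=q$, $k=1$ the left side is $\phi_p\phi_{p^2}$ while the right side is $\phi_{p^2}$), and your explicit note that \eqref{eq:cyc_prime_power_mult} and \eqref{eq:cyc_gen} need $p\nmid q$ is the right way to flag this.
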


\begin{proof}
We proceed by induction on $k$. Observe that the statement holds for $k=0$. If $k>0$, then by induction, we have
\begin{align}
\left(\prod_{i=0}^{k-1} \phi_{p^{i}q}(x)\right)\phi_{p^kq}(x)=\phi_q(x^{p^{k-1}})\phi_{p^kq}(x) \label{eq:piq}
\end{align}

Using identities \eqref{eq:cyc_prime_power_mult} and \eqref{eq:cyc_gen}, observe that 
\[\phi_{p^kq}(x)=\phi_{pq}(x^{p^{k-1}})={\phi_q(x^{p^k})\over \phi_q(x^{p^{k-1}})}.\]

Rewriting the right-hand side of equation \eqref{eq:piq} yields

\[\phi_q(x^{p^{k-1}})\phi_{p^kq}(x)=\phi_q(x^{p^{k-1}}){\phi_q(x^{p^k})\over \phi_q(x^{p^{k-1}})}=\phi_q(x^{p^k})\]
as desired.
\end{proof}

\begin{lemma}\label{thm:josh_pqr}
For three distinct prime numbers, $p,q,r$,
$$\phi_p(x)\cdot\phi_{pq}(x)\cdot\phi_{pr}(x)\cdot\phi_{pqr}(x)=\phi_p(x^{qr}).$$
\end{lemma}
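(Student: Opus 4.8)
The plan is to mimic the proof of Lemma~\ref{lem:product_piq}, exploiting the telescoping behaviour of identity \eqref{eq:cyc_gen}. The key observation is that the identity $\phi_m\phi_{pm}=\phi_m(x^p)$ lets us absorb a factor of the form $\phi_{pm}$ into an existing $\phi_m$ at the cost of raising the variable to the $p$-th power, and here we want to build up the full substitution $x\mapsto x^{qr}$ by doing this twice, once for $q$ and once for $r$.

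First I would group the factors on the left-hand side as $\bigl(\phi_p(x)\phi_{pr}(x)\bigr)\cdot\bigl(\phi_{pq}(x)\phi_{pqr}(x)\bigr)$. To the first group I apply \eqref{eq:cyc_gen} with $m=p$ and prime $r$ (noting $r\nmid p$ since $r\neq p$), obtaining $\phi_p(x)\phi_{pr}(x)=\phi_p(x^r)$. For the second group I apply \eqref{eq:cyc_gen} with $m=pq$ and prime $r$ (again $r\nmid pq$ since $r$ is distinct from both $p$ and $q$), obtaining $\phi_{pq}(x)\phi_{pqr}(x)=\phi_{pq}(x^r)$. So the whole left-hand side equals $\phi_p(x^r)\cdot\phi_{pq}(x^r)$.

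Now I substitute $y=x^r$ and apply \eqref{eq:cyc_gen} once more, this time with $m=p$ and prime $q$ (with $q\nmid p$): $\phi_p(y)\phi_{pq}(y)=\phi_p(y^q)$. Substituting back $y=x^r$ gives $\phi_p((x^r)^q)=\phi_p(x^{qr})$, which is exactly the claimed right-hand side. Alternatively, one can combine the factors in the order $\bigl(\phi_p\phi_{pq}\bigr)\bigl(\phi_{pr}\phi_{pqr}\bigr)=\phi_p(x^q)\phi_{pr}(x^q)=\bigl.\phi_p(z)\phi_{pr}(z)\bigr|_{z=x^q}=\phi_p(z^r)|_{z=x^q}=\phi_p(x^{qr})$, which works equally well and is perhaps the cleaner pairing to present.

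I do not anticipate a genuine obstacle here; the only thing to be careful about is checking the coprimality hypotheses of \eqref{eq:cyc_gen} at each application, which all hold precisely because $p,q,r$ are pairwise distinct primes. It is worth noting that, unlike Lemma~\ref{lem:product_piq}, no induction is needed since there are only finitely many (indeed, four) factors; the result is essentially a two-step telescoping. One could also remark that this lemma is the $k=1$, three-prime analogue of the pattern in Lemma~\ref{lem:product_piq}, and that the same grouping strategy would prove more general statements of the form $\prod_{S\subseteq\{q,r\}}\phi_{p\cdot\prod_{s\in S}s}(x)=\phi_p(x^{qr})$, but I would keep the stated proof short and direct.
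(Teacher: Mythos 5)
Your proof is correct, and it takes a genuinely different and arguably cleaner route than the paper's. The paper starts from the explicit rational-function expression for $\phi_{pqr}(x)$, namely
\[
\phi_{pqr}(x)=\frac{(x^p-1)(x^q-1)(x^r-1)(x^{pqr}-1)}{(x-1)(x^{pq}-1)(x^{pr}-1)(x^{qr}-1)}
= \phi_p(x)\cdot\frac{1}{\phi_p(x^q)}\cdot\frac{1}{\phi_p(x^r)}\cdot\phi_p(x^{qr}),
\]
then multiplies by $\phi_p\phi_{pq}\phi_{pr}$, applies identity \eqref{eq:cyc_gen} to turn $\phi_p\phi_{pq}$ into $\phi_p(x^q)$ and $\phi_p\phi_{pr}$ into $\phi_p(x^r)$, and cancels. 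Your proof avoids ever invoking the explicit formula for $\phi_{pqr}$: you pair the four factors as $(\phi_p\phi_{pr})(\phi_{pq}\phi_{pqr})=\phi_p(x^r)\phi_{pq}(x^r)$ and apply \eqref{eq:cyc_gen} one more time after the substitution $y=x^r$. Each of your three applications of \eqref{eq:cyc_gen} has its coprimality hypothesis satisfied because $p,q,r$ are pairwise distinct primes, so the argument is airtight. The trade-off is minor: the paper's proof makes visible how the rational-function form of $\phi_{pqr}$ already contains the target $\phi_p(x^{qr})$ (which foreshadows the cancellation tricks used later in the paper), whereas your telescoping argument is shorter, requires no memorized product formula, and, as you note, generalizes immediately to $\prod_{S\subseteq\{q_1,\dots,q_t\}}\phi_{p\prod_{s\in S}s}(x)=\phi_p(x^{q_1\cdots q_t})$, which the paper's formula-based argument would not extend to as easily.
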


\begin{proof}
Recall that
\begin{align*}
\phi_{pqr}(x)&=\frac{(x^p-1)(x^q-1)(x^r-1)(x^{pqr}-1)}{(x-1)(x^{pq}-1)(x^{pr}-1)(x^{qr}-1)}\\
&= \frac{x^p-1}{x-1}\cdot\frac{x^q-1}{x^{pq}-1}\cdot\frac{x^r-1}{x^{pr}-1}\cdot\frac{x^{pqr}-1}{x^{qr}-1}\\
&= \phi_p(x)\cdot\frac{1}{\phi_p(x^q)}\cdot\frac{1}{\phi_p(x^r)}\cdot\phi_p(x^{qr}).
\end{align*}
Multiplying by $\phi_p(x)\cdot\phi_{pq}(x)\cdot\phi_{pr}(x)$  we have
$$\phi_p(x)\cdot\phi_{pq}(x)\cdot\phi_{pr}(x)\cdot\phi_p(x)\cdot\frac{1}{\phi_p(x^q)}\cdot\frac{1}{\phi_p(x^r)}\cdot\phi_p(x^{qr}).$$
Using identity \eqref{eq:cyc_gen} on the first four terms yields
$$\phi_p(x^q)\cdot\phi_p(x^r)\cdot\frac{1}{\phi_p(x^q)}\cdot\frac{1}{\phi_p(x^r)}\cdot\phi_p(x^{qr}).$$
Now we can cancel out some terms and we are left with $\phi_p(x^{qr})$ as desired.

\end{proof}

Before proceeding to our main arguments and results, we discuss one final technique that we will take extensive advantage of. As we have seen, cyclotomic polynomials can be expressed as rational functions. We will often consider product of these cyclotomic polynomials, and because of the similarities in the different formulas for the cases we will be considering, a lot of cancellation occurs between common factors. For example, for primes $p$ and $q$, we have 
\[\phi_p\phi_{pq}={x^p-1\over x-1}{(x-1)(x^{pq}-1)\over (x^p-1)(x^q-1)}={x^{pq}-1\over x^q-1}.\]
We can further ``simplify" this by writing it as a product of (possibly finite) formal series:
\[{x^{pq}-1\over x^q-1}={1-x^{pq}\over 1-x^q}=(1-x^{pq})\sum_{i=0}^\infty x^{iq}.\]
Writing our functions in this way will be extremely useful in demonstrating when certain options for solutions yield negative coefficients, and thus are not solutions. 


\section{The $p^2q$ Case} \label{sec:p2q}

In this section, we address question \ref{qu:gen} in the case where $n=2$ and $m=p^2q$, where $p$ and $q$ are distinct prime numbers. 
In this case, the frequency polynomial would be
$$\left(\frac{x(x^{p^2q}-1)}{x-1}\right)^2=x^2 \phi_p^2\phi_{p^2}^2 \phi_q^2 \phi_{pq}^2  \phi_{p^2q}^2$$
Recall that if a polynomial $P(x)$ corresponds to a solution, than in this case we must have $P(1)=p^2q$. Thus, such a polynomial has the form
\[x\phi_q  \phi_p^{c_p}\phi_{p^2}^{c_{p^2}} \phi_{pq}^{c_{pq}}\phi_{p^2q}^{c_{p^2q}}\]
with integers $0\leq c_p,c_{p^2},c_{pq},c_{p^2q}\leq 2$ where $c_p+c_{p^2}=2$ and $c_i\leq 2$ for all $i$. This is because since $\phi_q(1)=q$, both $\phi_p(1)$ and $\phi_{p^2}(1)$ equal $p$, and the other cyclotomic polynomials evaluate to $1$ when $x=1$.

Thus, the Table \ref{tab:p2q_init_vectors}  represents all potential pairs of solutions, organized by the sum of the exponents. We omit the standard solution, where all exponents equal 1.
\begin{center}
\begin{table}[h]
\begin{tabular}{ |c c c c c c c c c| } 
 \hline
 $c_{p}$ & $c_{p^2}$ & $c_{pq}$ & $c_{p^2q}$ & $\leftrightarrow$ & $c_{p}$ & $c_{p^2}$ & $c_{pq}$ & $c_{p^2q}$  \\ \hline
 1 & 1 & 0 & 0 & $\leftrightarrow$ & 1 & 1 & 2 & 2\\ 
 1 & 1 & 0 & 1 & $\leftrightarrow$ & 1 & 1 & 2 & 1\\ 
 1 & 1 & 1 & 0 & $\leftrightarrow$ & 1 & 1 & 1 & 2\\ 
 1 & 1 & 0 & 2 & $\leftrightarrow$ & 1 & 1 & 2 & 0\\ 
 2 & 0 & 0 & 0 & $\leftrightarrow$ & 0 & 2 & 2 & 2\\ 
 2 & 0 & 0 & 1 & $\leftrightarrow$ & 0 & 2 & 2 & 1\\ 
 2 & 0 & 1 & 0 & $\leftrightarrow$ & 0 & 2 & 1 & 2\\ 
 2 & 0 & 1 & 1 & $\leftrightarrow$ & 0 & 2 & 1 & 1\\ 
 2 & 0 & 0 & 2 & $\leftrightarrow$ & 0 & 2 & 2 & 0\\ 
 2 & 0 & 2 & 0 & $\leftrightarrow$ & 0 & 2 & 0 & 2\\ 
 2 & 0 & 1 & 2 & $\leftrightarrow$ & 0 & 2 & 1 & 0\\ 
 2 & 0 & 2 & 1 & $\leftrightarrow$ & 0 & 2 & 0 & 1\\ 
 2 & 0 & 2 & 2 & $\leftrightarrow$ & 0 & 2 & 0 & 0\\ 
 \hline
\end{tabular}
\caption{The possible exponents for the solutions.}\label{tab:p2q_init_vectors}
\end{table}
\end{center}
We shall refer to the tuple $(c_{p},c_{p^2},c_{pq},c_{p^2q})$ as an \textit{exponent vector} for the corresponding polynomial. Both $(c_{p},c_{p^2},c_{pq},c_{p^2q})$ and $(2-c_{p},2-c_{p^2},2-c_{pq},2-c_{p^2q})$ must yield polynomials with positive coefficients for either (and therefore both) to be considered a solution.

The following Lemma, and consequential Corollary, serves as a useful tool to determine one way which an exponent vector can correspond to a polynomial with negative coefficients.

\begin{lemma}\label{lem:shriram}
Let $F(x)$ be a function of the form 
\[F(x)=\prod_{i=1}^k (1-x^{n_i})^{\epsilon_i}\]
where the $n_i$ are distinct positive integers and $\epsilon_i\in \mathbb{Z}$. If there exists a $j$ for which $n_j=1$ and $\epsilon_j>0$, then $[x]F<0$.
\end{lemma}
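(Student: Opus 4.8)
The plan is to isolate the single factor with base $n_j = 1$ and show the rest cannot cancel the linear term it produces. Because the $n_i$ are distinct, the hypothesis determines a unique index $j$ with $n_j = 1$, and $\epsilon_j \geq 1$. Accordingly I would write
\[
F(x) = (1-x)^{\epsilon_j}\, G(x), \qquad G(x) := \prod_{i \neq j}(1 - x^{n_i})^{\epsilon_i},
\]
where every $n_i$ still present in $G$ satisfies $n_i \geq 2$.

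The first step is to expand $G$ as a formal power series about $x = 0$, which is legitimate even when some $\epsilon_i < 0$ since each $1 - x^{n_i}$ has constant term $1$. Each factor $(1 - x^{n_i})^{\epsilon_i}$ equals $1 + O(x^{n_i})$ — by the binomial theorem when $\epsilon_i > 0$, and by expanding $\bigl(\sum_{t \geq 0} x^{t n_i}\bigr)^{-\epsilon_i}$ when $\epsilon_i < 0$. Since $n_i \geq 2$ for all $i \neq j$, multiplying these factors gives $G(x) = 1 + O(x^2)$, so $[x^0]G = 1$ and $[x^1]G = 0$.

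The second step is the expansion $(1 - x)^{\epsilon_j} = 1 - \epsilon_j x + O(x^2)$, valid because $\epsilon_j$ is a positive integer. The Cauchy product formula for the coefficient of $x$ then gives
\[
[x^1]F = [x^1](1-x)^{\epsilon_j}\cdot[x^0]G + [x^0](1-x)^{\epsilon_j}\cdot[x^1]G = (-\epsilon_j)(1) + (1)(0) = -\epsilon_j < 0,
\]
as claimed.

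I do not expect a real obstacle here: the only points needing care are invoking the distinctness of the $n_i$ so that $j$ is the unique source of a degree-$1$ monomial, and noting that $[x]F$ is the Taylor coefficient at $0$, which is well defined because no $1 - x^{n_i}$ vanishes there.
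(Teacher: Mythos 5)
Your proof is correct and follows essentially the same route as the paper's: isolate the unique factor with $n_j = 1$, observe that every other factor contributes $1 + O(x^2)$, and read off $[x]F = -\epsilon_j$. The paper phrases this directly in terms of the coefficients $[x](1-x^{n_i})^{\epsilon_i}$ without explicitly naming $G$, but the underlying argument is identical.
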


\begin{proof}
Given such a $j$, we have $[x](1-x^{n_j})^{\epsilon_j}=-\epsilon_j$. Since $n_i\neq 1$ for all $i\neq j$, we have $[x](1-x^{n_i})^{\epsilon_i}=0$ for $i\neq j$, whether written as is if $\epsilon_i>0$ or as a series when $\epsilon_i<0$. Since each of these terms additionally have the constant $1$, this implies the desired result. 
\end{proof}

\begin{corollary}\label{cor:p2q_oneminusx}
The polynomial with exponent vector $(c_{p},c_{p^2},c_{pq},c_{p^2q})$ has a factor of $1-x$ when expressed as a (reduced) rational function if and only if 
\[c_{pq}-c_{p}-1>0.\]
\end{corollary}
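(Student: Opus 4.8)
The plan is to compute the reduced rational function associated to an arbitrary exponent vector and read off the exponent of the factor $(1-x)$, then invoke Lemma \ref{lem:shriram}. First I would write the polynomial associated to the exponent vector $(c_p,c_{p^2},c_{pq},c_{p^2q})$ as $x\,\phi_q\,\phi_p^{c_p}\phi_{p^2}^{c_{p^2}}\phi_{pq}^{c_{pq}}\phi_{p^2q}^{c_{p^2q}}$ and substitute the rational-function formulas for each cyclotomic factor from the identity list: $\phi_p=(x^p-1)/(x-1)$, $\phi_q=(x^q-1)/(x-1)$, $\phi_{p^2}=(x^{p^2}-1)/(x^p-1)$, together with identity~(3.7) for $\phi_{p^2q}$ and the analogous product formula $\phi_{pq}=\frac{(x-1)(x^{pq}-1)}{(x^p-1)(x^q-1)}$. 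After collecting, each distinct ``$x^i-1$'' block appears with an integer exponent that is a linear function of the $c$'s; I only need to track the $(x-1)$ block.

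Next I would isolate the exponent of $(x-1)$. Among the listed factors, a factor of $x-1$ in the numerator arises from $\phi_q$ (the fixed part, contributing $-1$ in the denominator, i.e.\ exponent $-1$), from $\phi_p^{c_p}$ (denominator, exponent $-c_p$), and from $\phi_{p^2q}^{c_{p^2q}}$ via the numerator term $x^{p^{2}q}-1$ divided by \ldots wait---more carefully, $\phi_{p^2q}$ in the form (3.7) contributes an $x^{p}-1$ type block and does not involve $x-1$, while $\phi_{pq}$ in the product form contributes a $+1$ in the numerator. Also the leading $x$ contributes nothing to the $(x-1)$ count. So the net exponent on $(x-1)$ in the reduced rational function is $c_{pq}-c_p-1$ (the $+c_{pq}$ from the numerator $x-1$'s in $\phi_{pq}^{c_{pq}}$, the $-c_p$ from $\phi_p^{c_p}$, and the $-1$ from $\phi_q$). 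Hence the polynomial has a genuine factor $(1-x)$ in its reduced form exactly when $c_{pq}-c_p-1>0$; for the ``iff'' direction one checks the exponent is nonpositive otherwise, so no $(x-1)$ survives in the numerator.

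The main obstacle is bookkeeping rather than conceptual: I must make sure that when I substitute the various rational-function forms, I consistently pick a single canonical representative for each ``$x^i - 1$'' block (for instance always $x^i-1$ rather than $1-x^i$, absorbing signs into an overall $\pm 1$ that does not affect whether $[x]F<0$ after the reduction to a product as in Lemma~\ref{lem:shriram}), and that I correctly verify the reduced form has no hidden $(x-1)$ coming from, say, an $x^{p}-1$ factor when $p$ could equal\ldots no, $p$ is prime so $p\ge 2$, and the only block equal to $x-1$ is the literal $x^1-1$. I would double-check the count using one or two rows of Table~\ref{tab:p2q_init_vectors} (e.g.\ $(2,0,2,2)$ gives $c_{pq}-c_p-1=-1\le 0$, while its partner $(0,2,0,0)$ gives $-1$ as well, consistent with the standard-solution neighborhood), and confirm that the lemma then gives $[x]F<0$ precisely in the claimed regime. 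Finally I would note the ``only if'' direction is immediate: if $c_{pq}-c_p-1\le 0$ then after cancellation the $(x-1)$ block appears with exponent $\le 0$, i.e.\ only in the denominator or not at all, so there is no factor of $1-x$.
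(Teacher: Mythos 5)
Your proposal is correct and follows the same approach as the paper: express each cyclotomic factor as a rational function in the $x^i-1$ blocks, tally the exponent of $(x-1)$ coming from $\phi_q$ ($-1$), $\phi_p^{c_p}$ ($-c_p$), and $\phi_{pq}^{c_{pq}}$ ($+c_{pq}$), and observe the other factors contribute nothing, giving net exponent $c_{pq}-c_p-1$. (As a minor note, your accounting is in fact slightly cleaner than the paper's, which has a typo listing $\phi_{pq}$ among the factors with no $1-x$ block.)
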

\begin{proof}
 When written as rational functions as in Section \ref{sec:cyc}, $\phi_p$ and $\phi_q$ have a factor of $1-x$ on the denominator, $\phi_{pq}$ has a factor of $1-x$ on the numerator, and $\phi_{p^2}$, $\phi_{pq}$, and $\phi_{p^2q}$ do not have a factor of $1-x$. Thus, the exponent of $1-x$ in the polynomial corresponding to the given exponent vector is $c_{pq}-c_p-1$.
\end{proof}

Removing any rows satisfying Corollary \ref{cor:p2q_oneminusx} yields the Table \ref{tab:p2q_sec_vectors}. Note such a vector necessarily has the form $(0,*,2,*)$, where the entries with $*$ can be any value.

\begin{center}
\begin{table}[h]
\begin{tabular}{ |c c c c c c c c c| } 
 \hline
 $c_{p}$ & $c_{p^2}$ & $c_{pq}$ & $c_{p^2q}$ & $\leftrightarrow$ & $c_{p}$ & $c_{p^2}$ & $c_{pq}$ & $c_{p^2q}$  \\ \hline
 1 & 1 & 0 & 0 & $\leftrightarrow$ & 1 & 1 & 2 & 2\\ 
 1 & 1 & 0 & 1 & $\leftrightarrow$ & 1 & 1 & 2 & 1\\ 
 1 & 1 & 1 & 0 & $\leftrightarrow$ & 1 & 1 & 1 & 2\\ 
 1 & 1 & 0 & 2 & $\leftrightarrow$ & 1 & 1 & 2 & 0\\ 
 2 & 0 & 1 & 0 & $\leftrightarrow$ & 0 & 2 & 1 & 2\\ 
 2 & 0 & 1 & 1 & $\leftrightarrow$ & 0 & 2 & 1 & 1\\ 
 2 & 0 & 2 & 0 & $\leftrightarrow$ & 0 & 2 & 0 & 2\\ 
 2 & 0 & 1 & 2 & $\leftrightarrow$ & 0 & 2 & 1 & 0\\ 
 2 & 0 & 2 & 1 & $\leftrightarrow$ & 0 & 2 & 0 & 1\\ 
 2 & 0 & 2 & 2 & $\leftrightarrow$ & 0 & 2 & 0 & 0\\ 
 \hline
\end{tabular}
\caption{The rows from Table \ref{tab:p2q_init_vectors} not satisfying Corollary \ref{cor:p2q_oneminusx}.}\label{tab:p2q_sec_vectors}
\end{table}
\end{center}

All but three of the rows can be guaranteed to have positive coefficients by using a combination of the identities Lemma \ref{lem:product_piq}, $\phi_q\phi_{pq}=\phi_q{x^p}$, $\phi_p\phi_{pq}=\phi_p(x^q)$, and $\phi_{p^2}\phi_{qp^2}=\phi_{p^2}(x^2)$. These three cases are in the following table.

\begin{center}
\begin{table}[h]
\begin{tabular}{ |c c c c c c c c c| } 
 \hline
 $c_{p}$ & $c_{p^2}$ & $c_{pq}$ & $c_{p^2q}$ & $\leftrightarrow$ & $c_{p}$ & $c_{p^2}$ & $c_{pq}$ & $c_{p^2q}$  \\ \hline
 1 & 1 & 0 & 2 & $\leftrightarrow$ & 1 & 1 & 2 & 0\\ 
 2 & 0 & 1 & 2 & $\leftrightarrow$ & 0 & 2 & 1 & 0\\ 
 2 & 0 & 2 & 2 & $\leftrightarrow$ & 0 & 2 & 0 & 0\\ 
 \hline
\end{tabular}
\caption{The rows from Table \ref{tab:p2q_sec_vectors} which we can not guarantee have positive coefficients.}\label{tab:p2q_third_vectors}
\end{table}
\end{center}


We demonstrate explicitly that in each case, one of the pairs has negative coefficients by expressing the corresponding polynomials as a product of series. In the following proofs, we omit the formulation of these products, though the rough computation can be found in Appendix \ref{sec:comp_p2q}.

\begin{lemma}
The polynomial with exponent vector $(1,1,0,2)$, which we denote $A_{1102}$, has a negative coefficient.
\end{lemma}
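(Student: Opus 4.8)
The plan is to compute the polynomial $A_{1102}$ explicitly as a product of formal series and exhibit a single coefficient that is negative. Recall $A_{1102} = x\,\phi_q\,\phi_p\,\phi_{p^2}\,\phi_{p^2q}^2$ (exponents $c_p=1$, $c_{p^2}=1$, $c_{pq}=0$, $c_{p^2q}=2$). First I would rewrite each cyclotomic factor as a rational function using the identities in Section \ref{sec:cyc}: $\phi_p = (x^p-1)/(x-1)$, $\phi_q = (x^q-1)/(x-1)$, $\phi_{p^2} = (x^{p^2}-1)/(x^p-1)$, and $\phi_{p^2q} = \dfrac{(x^p-1)(x^{p^2q}-1)}{(x^{p^2}-1)(x^{pq}-1)}$. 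Substituting and cancelling common factors of $x^p-1$, $x^{p^2}-1$, and so on should collapse $A_{1102}$ into a compact product of the form $x\cdot(\text{numerator factors})\cdot\prod(\text{geometric series})$; the rough bookkeeping for this cancellation is exactly what Appendix \ref{sec:comp_p2q} is meant to record, so I would carry it out there and merely quote the resulting expression in the body of the proof.

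Next, with $A_{1102}$ written as $\prod_i (1-x^{n_i})^{\epsilon_i}$ times $x$, I would expand the negative-exponent factors as geometric series $\sum x^{i n_i}$ and look for a small-degree coefficient whose sign is forced negative. The natural candidate is a coefficient near the low end of the support: after the factor of $x$, the constant term is $1$ (the dice is labelled starting at $1$, as it must be), but one of the next few coefficients — governed by the numerator factor $(1-x^{pq})$ surviving in the denominator-turned-series, or more precisely by the interplay between $(1-x^{pq})^{-1}$-type series and a numerator $1-x^{\text{something}}$ — should come out negative. Concretely I expect that after simplification $A_{1102}$ has the shape $x(1-x^{pq})^{-2}(1-x^{p^2q})^{2}\cdot(\text{polynomial})$ or similar, and reading off the coefficient of $x^{1+pq}$ (or $x^{1+2pq}$) gives a manifestly negative value because two series each contribute a $+1$ at a lower shifted degree while a numerator term subtracts $2$, or because the squared numerator $(1-x^{p^2q})^2$ contributes $-2$ at a degree not cancelled by anything below it. I would just present the series product, point at the offending monomial, and compute its coefficient by hand.

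The main obstacle is organizational rather than conceptual: getting the cancellation right so that the final series product is genuinely in lowest terms (no stray $(1-x)$ factor — indeed Corollary \ref{cor:p2q_oneminusx} already guarantees there is none here since $c_{pq}-c_p-1 = -2 < 0$), and then correctly identifying which coefficient is negative without accidentally picking one that is saved by a cross term from another series. I would hedge against this by computing the first several coefficients of the product symbolically in $p$ and $q$, using that $p,q$ are distinct primes (so in particular $p\neq q$, $pq > p^2$ iff $q>p$ — I may need to consider $q<p$ and $q>p$, or better, phrase the negative coefficient at a degree like $1+pq$ whose position relative to $1+p^2$ depends only on comparing $q$ and $p$, and handle both orderings, or else locate a coefficient whose negativity is uniform in the ordering). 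Once the compact form is in hand this is a short finite check, so the write-up in the body can be a single displayed equation for $A_{1102}$ as a series product followed by one line identifying the negative coefficient, with the derivation of that displayed form deferred to Appendix \ref{sec:comp_p2q}.
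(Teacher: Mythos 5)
Your high-level strategy — rewrite $A_{1102}$ as a product of $(1-x^{n_i})$ factors and geometric series, then point at a negative coefficient — is exactly the paper's strategy, and deferring the cancellation bookkeeping to the appendix is also what the paper does. But the execution you sketch has two real problems.

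First, your guessed compact form is wrong. The actual simplification (in Appendix \ref{sec:comp_p2q}) is
\[A_{1102}=(1-x^q)(1-x^p)^2(1-x^{p^2q})^2\Bigl(\sum x^{i}\Bigr)^2\Bigl(\sum x^{ip^2}\Bigr)\Bigl(\sum x^{ipq}\Bigr)^2,\]
which is not of the shape $x(1-x^{pq})^{-2}(1-x^{p^2q})^{2}\cdot(\text{polynomial})$ that you propose to read coefficients from. Because you've misidentified which factors survive, your proposed target degrees $1+pq$ and $1+2pq$ are poorly chosen: at degree $pq$ in the simplified series, the factor $\bigl(\sum x^{ipq}\bigr)^2$ already contributes $+2$ and $\bigl(\sum x^i\bigr)^2$ contributes $pq+1$, so that coefficient is not obviously — and likely not actually — negative. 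The coefficient the paper uses is $x^{p+q-1}$, chosen precisely because $p+q-1 < pq$ kills the $\sum x^{ipq}$ contributions entirely, and because the surviving numerator terms $-x^q\cdot p x^{p-1}$ and $-2x^p\cdot q x^{q-1}$ dominate the positive $\bigl(\sum x^i\bigr)^2$ contribution. You gesture at hedging by "computing the first several coefficients symbolically," but you never land on a degree that works, and the leading candidates you name do not.

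Second, you describe the final step as "a short finite check," which badly underestimates it. Even after the right degree $p+q-1$ is chosen, the series $\sum x^{ip^2}$ can contribute whenever $p+q-1\geq p^2$, and the paper's proof splits into roughly five sub-cases (whether $q<p$ or $q>p$; then whether $p^2>q-1$, whether $p^2\leq q-1<p^2+p$, or $p^2+p\leq q-1$; and within the last, which of two floor-function equalities holds) with floor-function arithmetic in each. The analysis is one of the longest in the paper. So while your conceptual plan is sound and matches the paper's method, the concrete details you commit to — the compact form, the target coefficient, and the difficulty estimate — would all have to be revised before this becomes a proof.
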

\begin{proof}
After, simplification, we have
\[A_{1102}=(1-x^q)(1-x^p)^2(1-x^{p^2q})^2\left(\displaystyle \sum_{i=0}^\infty x^{i}\right)^2\left(\displaystyle \sum_{i=0}^\infty x^{ip^2}\right)\left(\displaystyle \sum_{i=0}^\infty x^{ipq}\right)^2\]
Let us discuss all the ways in which $x^{p+q-1}$ appears. 








To start, we have the following terms which come from the terms $(1-x^q)$, $(1-x^p)^2$, and $\left(\displaystyle \sum_{i=0}^\infty  x^{i}\right)^2$:
\begin{itemize}
\item $-x^q\cdot px^{p-1}=-px^{p+q-1}$
\item $-2x^p\cdot qx^{q-1}=-2qx^{p+q-1}$
\item $(p+q)x^{p+q-1}$
\end{itemize}

When $q>p$, we must account for the $x^{2p}$ term appearing in $(1-x^p)^2$. In this case, we additionally have $x^{2p}\cdot (q-p)x^{q-p-1}=(q-p)x^{p+q-1}$.
Observe the sum of these terms is precisely 
\[\begin{cases}
-q & p>q\\
-p &q>p.
\end{cases}\]

Next, note that $p+q-1<pq$ for all prime numbers $p$ and $q$. Thus, $\left(\sum x^{ipq}\right)^2$ never contributes to such coefficients.
On the other hand, it is quite possible to have $p+q-1>p^2$. However, this can only occur if $q>p$, as if $q<p$, we have that \[p+q-1<2p-1<p^2.\]
 Thus, we are done in the case where $q<p$. We proceed assuming $q>p$(and thus, the current accumulated coefficient of $x^{p+q-1}$ is $-p$)  and $p+q-1>p^2$ . The remaining coefficients for $x^{p+q-1}$ is given by the following.

\begin{align*}
[x^{p+q-1}]&\left(\sum_{i=1}^{\left\lfloor {p+q-1\over p^2}\right\rfloor} x^{ip^2} (p+q-ip^2)x^{p+q-ip^2-1}+
\sum_{i=1}^{\left\lfloor {q-1\over p^2}\right\rfloor} -2x^p x^{ip^2} (q-ip^2)x^{q-ip^2-1}\right. \\
&+\left.
\sum_{i=1}^{\left\lfloor {q-p-1\over p^2}\right\rfloor} x^{2p} x^{ip^2} (q-p-ip^2)x^{q-p-ip^2-1}\right)\\
&=\sum_{i=1}^{\left\lfloor {p+q-1\over p^2}\right\rfloor}  (p+q-ip^2)-2
\sum_{i=1}^{\left\lfloor {q-1\over p^2}\right\rfloor}  (q-ip^2)+
\sum_{i=1}^{\left\lfloor {q-p-1\over p^2}\right\rfloor}  (q-p-ip^2).
\end{align*}

Suppose that $p^2> q-1$, and thus $p^2> q-p-1$. This implies that \( {p+q-1\over p^2}<2\) so the above sum is $p+q-p^2$, which when combined with the $-p$ above is $q-p^2\leq 1$. If $p=2$, note that this implies $q=3$, and in this case $q-p^2=-1$. If $p>2$, then $q-p^2$ is an even number, and since $q\neq p^2$ this implies $q-p^2<0$. 

If $p^2\leq q-1$ but $p^2+p> q-1$, note that $1\leq \left\lfloor{q-1\over p^2}\right\rfloor\leq  \left\lfloor{p^2+p\over p^2}\right\rfloor<2$. Meanwhile, $1\leq \left\lfloor{p+q-1\over p^2}\right\rfloor\leq  \left\lfloor{p^2+2p\over p^2}\right\rfloor\leq 2$. However, the right most inequality is only an equality when $p=2$, and since $q-1<p^2+p=6$, this would imply that either $q=3$ or $q=5$, and in both cases $\left\lfloor{p+q-1\over p^2}\right\rfloor\leq 1$.
 The above sum, when combined with $-p$, gives $-q-3p^2<0$. 

If $p^2+p\leq q-1$, we have a few cases depending on if 
\[\left\lfloor{p+q-1\over p^2}\right\rfloor=\left\lfloor{q-1\over p^2}\right\rfloor\]
or
\[ \left\lfloor{q-1\over p^2}\right\rfloor=\left\lfloor{q-p-1\over p^2}\right\rfloor.\] If both are true, the above three sums becomes $0$. If only the first is true, when combined with $-p$, the sums give
\[ -q+\left\lfloor {q-1\over p^2}\right\rfloor p^2<0.\]

In the last case, when only the second is true, the three terms combined with $-p$ give
\[ q-\left\lfloor {p+q-1\over p^2}\right\rfloor p^2= q-\left(\left\lfloor {q-1\over p^2}\right\rfloor+1\right) p^2=q-\left(\left\lfloor {q\over p^2}\right\rfloor+1\right) p^2,\]
where the last equality is true since $q$ is prime and so $p^2$ does not divide $q$. Observe that $\left\lfloor {q\over p^2}\right\rfloor p^2$ is the smallest multiple of $p^2$ smaller than $q$, so the above must be negative.

\end{proof}

\begin{lemma}
The polynomial with exponent vector $(2,0,2,2)$, which we denote $A_{2022}$, has a negative coefficient.
\end{lemma}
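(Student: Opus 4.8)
The plan is to factor $A_{2022}$ into two transparent pieces and locate a negative coefficient of the product. One computes (cf.\ Appendix~\ref{sec:comp_p2q}) that, after all cancellation,
\[
A_{2022}=(1-x^{p})^{2}(1-x^{p^{2}q})^{2}\left(\sum x^{i}\right)\left(\sum x^{iq}\right)\left(\sum x^{ip^{2}}\right)^{2}=B(x)\,C(x),
\]
where $B(x):=(1-x^{p})^{2}\left(\sum x^{i}\right)$ and $C(x):=(1-x^{p^{2}q})^{2}\left(\sum x^{iq}\right)\left(\sum x^{ip^{2}}\right)^{2}$ (as in the previous lemma, the harmless leading factor $x$ is suppressed). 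Two elementary facts drive everything. First, $B(x)=\sum_{k=0}^{p-1}x^{k}-\sum_{k=p}^{2p-1}x^{k}$, so $[x^{k}]B$ is $1$ for $0\le k\le p-1$, is $-1$ for $p\le k\le 2p-1$, and is $0$ otherwise. Second, every nonconstant term of $(1-x^{p^{2}q})^{2}$ and of $\left(\sum x^{ip^{2}}\right)^{2}$ has degree at least $p^{2}$, so $C(x)\equiv\sum x^{iq}\pmod{x^{p^{2}}}$; hence $[x^{M}]C$ is $1$ when $q\mid M$ and $0$ otherwise, for all $0\le M\le p^{2}-1$. Combining these (and $[x^{M}]C=0$ for $M<0$): for every integer $N\le p^{2}-1$, the coefficient $[x^{N}]A_{2022}$ equals the number of multiples of $q$ in the integer interval $[\max(0,\,N-p+1),\,N]$ minus the number of multiples of $q$ in $[\max(0,\,N-2p+1),\,N-p]$.

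It now suffices to choose $N$ so the lower window $[N-2p+1,N-p]$ contains strictly more multiples of $q$ than the upper window $[N-p+1,N]$. If $q>p$, take $N=p$: the upper window $[1,p]$ contains no multiple of $q$, while the lower one contributes only $M=0$, so $[x^{p}]A_{2022}=0-1=-1$. If instead $q<p$ (which forces $p\ge 3$), write $r:=p\bmod q$, so $1\le r\le q-1$ because $q\nmid p$, and let $N$ be the unique integer in $[2p-1,\,2p+q-2]$ (a block of $q$ consecutive integers) with $N\equiv p\pmod q$; then $N\le 3p-3\le p^{2}-1$, so $N$ is in range, and $N-2p+1\ge 0$, so both windows consist of non-negative integers. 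Now a block of $p$ consecutive integers $[m+1,m+p]$ contains $\lfloor p/q\rfloor$ or $\lfloor p/q\rfloor+1$ multiples of $q$, the larger value occurring precisely when $m\bmod q\ge q-r$. For our $N$ the lower window has $m=N-2p\equiv -p\equiv q-r\pmod q$, so it contains $\lfloor p/q\rfloor+1$ multiples of $q$, while the upper window has $m=N-p\equiv 0\pmod q$, so it contains only $\lfloor p/q\rfloor$ of them; therefore $[x^{N}]A_{2022}=\lfloor p/q\rfloor-(\lfloor p/q\rfloor+1)=-1<0$.

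I expect the crux to be producing a single degree $N$ whose coefficient is negative uniformly in the primes $p$ and $q$; the unlocking idea is to peel $B$ apart into its two $\pm1$ blocks, which converts the question into comparing the number of multiples of $q$ in two adjacent blocks of $p$ consecutive integers --- pure modular bookkeeping. The case distinction between $q>p$ and $q<p$ is needed only because when $q>p$ the natural choice of $N$ would run past degree $p^{2}$, beyond which $C$ is no longer essentially $\sum x^{iq}$. The details that warrant care are the congruence $C\equiv\sum x^{iq}\pmod{x^{p^{2}}}$, the bound $N\le p^{2}-1$ in the case $q<p$, the behaviour of both windows at $0$, and the count of multiples of $q$ in a block of length $p$ (in particular the congruence $N-2p\equiv q-r\pmod q$).
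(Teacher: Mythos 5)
Your proof is correct and follows essentially the same route as the paper: both use the same factorization of $A_{2022}$ into a product of $(1-x^p)^2$, $(1-x^{p^2q})^2$, and three geometric series, both take $N=p$ when $q>p$, and both take a degree congruent to $p$ modulo $q$ near $2p$ when $q<p$ (the paper's $m=p+\lfloor p/q\rfloor q=2p-r$ and your $N$ differ by $0$ or $q$, both landing below $p^2$). Your reformulation of the coefficient as a difference of counts of multiples of $q$ in two adjacent length-$p$ windows is a clean way to package the same floor-function bookkeeping the paper carries out directly.
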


\begin{proof}
After simplification, we have 
\[  A_{2022}=(1-x^p)^2(1-x^{p^2q})^2\left(\displaystyle \sum_{i=0}^\infty  x^{i}\right)\left(\displaystyle \sum_{i=0}^\infty  x^{iq}\right)\left(\displaystyle \sum_{i=0}^\infty  x^{ip^2}\right)^2.\]
First, if $q>p$, observe that the $x^p$ coefficient is precisely 
\[-2x^p+x^p=-x^p.\]

Otherwise, if $q<p$, observe that it is possible that the sum $\sum x^{iq}$ could contribute additional terms which make the coefficient of $x^p$ non-negative.

Let $m=p+\left\lfloor{p\over q}\right\rfloor q$. We claim the coefficient of $x^m$ is $-1$. First, note that $m<2p<p^2$ since $p>2$. Now see that the coefficient of $x^m$ is
\begin{align*}
[x^{m}]&\left(\sum_{i=0}^{\left\lfloor {m\over q}\right\rfloor}x^{iq}x^{m-iq}-2\sum_{i=0}^{\left\lfloor {m-p\over q}\right\rfloor}x^px^{iq}x^{m-p-iq} \right)\\
&= -1+\left\lfloor {p\over q}\right\rfloor+\left\lfloor {p\over q}\right\rfloor-2{\left\lfloor {p\over q}\right\rfloor}\\
&=-1.
\end{align*}

\end{proof}

\begin{lemma}
The polynomial with exponent vector $(2,0,1,2)$, which we denote $A_{2012}$, has a negative coefficient. 
\end{lemma}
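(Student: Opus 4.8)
The plan is to follow the pattern of the two preceding lemmas: first rewrite $A_{2012}$ as a product of series, then isolate one coefficient that is visibly negative. Carrying out the cyclotomic simplification (with the details relegated to Appendix \ref{sec:comp_p2q}, as before) yields
\[A_{2012}=(1-x^p)^3(1-x^{p^2q})^2\left(\sum x^{i}\right)^2\left(\sum x^{ip^2}\right)^2\left(\sum x^{ipq}\right).\]
As with $A_{2022}$, the relevant low coefficients behave differently according to the size of $p$, so I would split into the cases $p\geq 3$ and $p=2$.

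For $p\geq 3$ I would read off the coefficient of $x^{2p}$. Since $pq>2$ we have $p^2q>2p$, so $(1-x^{p^2q})^2$ contributes only its constant term in degrees $\leq 2p$, and in $(1-x^p)^3=1-3x^p+3x^{2p}-x^{3p}$ the $-x^{3p}$ term plays no role there. With $B=\left(\sum x^{i}\right)^2\left(\sum x^{ip^2}\right)^2\left(\sum x^{ipq}\right)$ this gives $[x^{2p}]A_{2012}=[x^{2p}]B-3[x^{p}]B+3[x^{0}]B$. Because $p^2>2p$ (as $p\geq 3$) and $pq>p$, one reads off $[x^{0}]B=1$ and $[x^{p}]B=p+1$; and $[x^{2p}]B=2p+1$ when $q\geq 3$, while $[x^{2p}]B=2p+2$ when $q=2$ (the extra unit coming from the $x^{pq}=x^{2p}$ term of $\sum x^{ipq}$). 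In either subcase $[x^{2p}]A_{2012}=[x^{2p}]B-3p\leq (2p+2)-3p=2-p<0$.

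For $p=2$ (so that $q$ is an odd prime) the coefficient of $x^{2p}=x^4$ turns out to be nonnegative, so I would instead look at $x^3$. Grouping $(1-x^2)^3\left(\sum x^{i}\right)^2=(1-x)(1+x)^3=1+2x-2x^3-x^4$, we obtain
\[A_{2012}=(1+2x-2x^3-x^4)(1-x^{4q})^2\left(\sum x^{4i}\right)^2\left(\sum x^{2qi}\right).\]
Since $2q\geq 6$ and $4q\geq 12$, every term of positive degree in $(1-x^{4q})^2\left(\sum x^{4i}\right)^2\left(\sum x^{2qi}\right)$ has degree at least $4$, while its constant term is $1$; hence $[x^3]A_{2012}=[x^3](1+2x-2x^3-x^4)=-2<0$.

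The only delicate point is the degree bookkeeping: in each case one must confirm that the other series factors ($\sum x^{ip^2}$, $\sum x^{ipq}$, and $1-x^{p^2q}$) cannot contribute at the chosen exponent and spoil the sign. That verification is exactly what forces the split $p=2$ versus $p\geq 3$, and what makes the subcase $q=2$ worth tracking inside $p\geq 3$; once those inequalities are in place the coefficient computations are immediate.
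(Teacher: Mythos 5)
Your proof is correct, but it takes a slightly more laborious route than the paper's. The paper examines $[x^{2p-1}]A_{2012}$ uniformly: since $(p-1)^2>0$ gives $2p-1<p^2$ and clearly $2p-1<pq$ and $2p-1<p^2q$, the only factors that can hit degree $2p-1$ are $(1-x^p)^3$ and $\left(\sum x^i\right)^2$, and a one-line computation gives $-3\cdot p + 2p = -p$ with no case split needed. You instead target $[x^{2p}]$ for $p\geq 3$ (which forces you to track the subcase $q=2$, since $pq=2p$ then creeps into range) and $[x^3]$ for $p=2$. Your degree bookkeeping is right in each branch and the resulting coefficients are indeed negative, so the argument stands; but notice that your $p=2$ computation of $[x^3]$ is in fact the specialization of the paper's $[x^{2p-1}]$ argument, and that same exponent would have worked for all $p$, eliminating the case analysis and the $q=2$ subcase entirely. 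In short: same preliminary decomposition into a product of series, different (and less economical) choice of which coefficient to inspect.
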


\begin{proof}
After simplification, we have 
\[ A_{2012}=(1-x^p)^3(1-x^{p^2q})^2\left(\displaystyle \sum_{i=0}^\infty  x^{i}\right)^2\left(\displaystyle \sum_{i=0}^\infty  x^{ip^2}\right)^2\left(\displaystyle \sum_{i=0}^\infty  x^{ipq}\right).\]
Note that $2p-1<p^2$ and $2p-1\leq pq$ for all prime numbers $p$ and $q$. Thus, the only terms which contribute to the coefficient of $x^{2p-1}$ is $(1-x^p)^3$ and $\left(\sum x^{i}\right)^2$. Observe, between these two, the term of the form $x^{2p-1}$  is
\[-3x^p\cdot px^{p-1}+2px^{2p-1}=-px^{2p-1}.\]
whose coefficient is negative.
\end{proof}

Thus, will all these Lemmas, the following table indicates all possible solutions in the case of $p^2q$.

\begin{center}
\begin{table}[h]
\begin{tabular}{ |c c c c c c c c c| } 
 \hline
 $c_{p}$ & $c_{p^2}$ & $c_{pq}$ & $c_{p^2q}$ & $\leftrightarrow$ & $c_{p}$ & $c_{p^2}$ & $c_{pq}$ & $c_{p^2q}$  \\ \hline
 1 & 1 & 0 & 0 & $\leftrightarrow$ & 1 & 1 & 2 & 2\\ 
 1 & 1 & 0 & 1 & $\leftrightarrow$ & 1 & 1 & 2 & 1\\ 
 1 & 1 & 1 & 0 & $\leftrightarrow$ & 1 & 1 & 1 & 2\\ 
 2 & 0 & 1 & 0 & $\leftrightarrow$ & 0 & 2 & 1 & 2\\ 
 2 & 0 & 1 & 1 & $\leftrightarrow$ & 0 & 2 & 1 & 1\\ 
 2 & 0 & 2 & 0 & $\leftrightarrow$ & 0 & 2 & 0 & 2\\ 
 2 & 0 & 2 & 1 & $\leftrightarrow$ & 0 & 2 & 0 & 1\\ 
 1 & 1 & 1 & 1 & $\leftrightarrow$ & 1 & 1 & 1 & 1 \\
 \hline
\end{tabular}
\caption{The solutions for the $p^2q$ case.}\label{tab:p2q_final_vectors}
\end{table}
\end{center}

\begin{theorem}\label{thm:p2q}
Table \ref{tab:p2q_final_vectors} makes up all possible solutions for the $p^2q$. In particular, there are 15 solutions (specifically, $8$ possible pairs of dice).
\end{theorem}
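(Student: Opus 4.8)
The plan is to assemble the bookkeeping that the preceding discussion has set up. By the reduction in this section, a solution polynomial must have the form $x\phi_q\phi_p^{c_p}\phi_{p^2}^{c_{p^2}}\phi_{pq}^{c_{pq}}\phi_{p^2q}^{c_{p^2q}}$ with $c_p+c_{p^2}=2$ and $c_{pq},c_{p^2q}\in\{0,1,2\}$, giving $27$ exponent vectors, which fall into $14$ complementary pairs $\{v,\,(2,2,2,2)-v\}$: the $13$ non-standard pairs of Table \ref{tab:p2q_init_vectors} together with the self-complementary standard pair $(1,1,1,1)$. Recall that a pair yields two genuine dice precisely when \emph{both} of its polynomials have non-negative coefficients. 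So the theorem reduces to two claims: (a) each of the $7$ non-standard pairs in Table \ref{tab:p2q_final_vectors}, together with the standard pair, consists of non-negative polynomials; (b) each of the remaining $6$ non-standard pairs contains a polynomial with a negative coefficient. Granting (a) and (b), the valid pairs are exactly the $8$ of Table \ref{tab:p2q_final_vectors}; since the standard pair is fixed by complementation while each of the other $7$ pairs contributes two distinct solution polynomials, there are $2\cdot 7+1=15$ solutions.

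For (b) I would proceed exactly as in the lemmas just proved. Corollary \ref{cor:p2q_oneminusx} identifies the three vectors $(0,2,2,0),(0,2,2,1),(0,2,2,2)$ as the ones carrying a true factor of $1-x$ in reduced form, and Lemma \ref{lem:shriram} then forces $[x]F<0$ for each; this removes the three pairs containing them and leaves the $10$ pairs of Table \ref{tab:p2q_sec_vectors}. Of the four of those pairs not appearing in Table \ref{tab:p2q_final_vectors}, three are $(1,1,0,2)$, $(2,0,1,2)$, $(2,0,2,2)$, whose polynomials $A_{1102}$, $A_{2012}$, $A_{2022}$ have just been shown to have a negative coefficient. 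That accounts for all $6$ bad pairs.

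For (a), the standard pair is immediate since $x\phi_q\phi_p\phi_{p^2}\phi_{pq}\phi_{p^2q}=x(x^{p^2q}-1)/(x-1)=x+x^2+\cdots+x^{p^2q}$. For the $7$ remaining pairs I would, for each of the $14$ polynomials involved, regroup the product of cyclotomic factors into a product of polynomials that are visibly non-negative — geometric-type sums such as $\phi_q(x^{p^k})=1+x^{p^k}+\cdots$, $\phi_p(x^q)$, $\phi_{p^2}(x^q)$, $\phi_p(x^{pq})$ — using exactly the identities flagged in the text: Lemma \ref{lem:product_piq} (in particular $\phi_q\phi_{pq}\phi_{p^2q}=\phi_q(x^{p^2})$ and $\phi_q\phi_{pq}=\phi_q(x^p)$), the identities $\phi_p\phi_{pq}=\phi_p(x^q)$ and $\phi_{p^2}\phi_{p^2q}=\phi_{p^2}(x^q)$, and the substitutions $\phi_{p^2}=\phi_p(x^p)$, $\phi_{p^2q}=\phi_{pq}(x^p)$ for the complementary vectors with a lone $\phi_{p^2q}$. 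For instance $(2,0,2,1)$ regroups as
\[x\phi_q\phi_p^2\phi_{pq}^2\phi_{p^2q}=x\cdot\phi_p\cdot(\phi_p\phi_{pq})\cdot(\phi_q\phi_{pq}\phi_{p^2q})=x\,\phi_p\,\phi_p(x^q)\,\phi_q(x^{p^2}),\]
its complement $(0,2,0,1)$ as
\[x\phi_q\phi_{p^2}^2\phi_{p^2q}=x\phi_q\cdot\bigl[\phi_p^2\phi_{pq}\bigr](x^p)=x\,\phi_q\,\phi_p(x^p)\,\phi_p(x^{pq}),\]
and a "heavy" complement such as $(1,1,2,2)$ as
\[x\phi_q\phi_p\phi_{p^2}\phi_{pq}^2\phi_{p^2q}^2=x\cdot(\phi_q\phi_{pq}\phi_{p^2q})\cdot(\phi_p\phi_{pq})\cdot(\phi_{p^2}\phi_{p^2q})=x\,\phi_q(x^{p^2})\,\phi_p(x^q)\,\phi_{p^2}(x^q);\]
each of the other surviving rows admits a similar grouping.

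The one genuine obstacle is this last step: producing such a non-negative regrouping uniformly across all seven surviving rows and both members of each pair, where the heavier complementary vectors force one to combine several of the identities and sometimes pass through an $x\mapsto x^p$ substitution before a non-negative factorization becomes visible. Everything else — the eliminations in (b) and the final count — is routine assembly of results already established.
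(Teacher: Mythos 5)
Your proposal is correct and follows essentially the same path as the paper: it reproduces the exponent-vector bookkeeping, uses Corollary \ref{cor:p2q_oneminusx} via Lemma \ref{lem:shriram} to eliminate the three $(0,2,2,*)$ pairs, invokes the three explicit lemmas to kill $(1,1,0,2)$, $(2,0,1,2)$, $(2,0,2,2)$, and regroups the surviving polynomials into products of non-negative cyclotomic substitutions using Lemma \ref{lem:product_piq}, identity \eqref{eq:cyc_gen}, and $\phi_{p^k m}=\phi_{pm}(x^{p^{k-1}})$. The ``obstacle'' you flag at the end is not actually a gap: the paper likewise leaves the non-negative regroupings as a routine exercise, and your three worked examples correctly exhibit the pattern that covers the remaining rows.
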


\begin{example}
Since $12=2^2\cdot 3$, the following make up the complete list of labels for two $12$-sided whose sums have the same frequencies of two standard $12$-sided dice. These are ordered in the same way as in Table \ref{tab:p2q_final_vectors}. 

\begin{itemize}
\item $1,2,2,3,3,3,4,4,4,5,5,6$ and $1,4,5,7,8,9,10,11,12,14,15,18$.
\item $1,2,2,3,3,4,7,8,8,9,9,10$ and $1,3,4,5,6,7,8,9,10,11,12,14$.
\item $1 ,2 ,3 ,3 ,4 ,4 ,5 ,5 ,6 ,6 ,7 ,8 $ and $ 1 ,2 ,5 ,6 ,7 ,8 ,9 ,10 ,11 ,12 ,15 ,16$.
\item $1 ,2 ,2 ,3 ,3 ,4 ,4 ,5 ,5 ,6 ,6 ,7 $ and $1 ,3 ,5 ,7 ,7 ,9 ,9 ,11 ,11 ,13 ,15 ,17 $.
\item $ 1 ,2 ,2 ,3 ,5 ,6 ,6 ,7 ,9 ,10 ,10 ,11 $ and $1 ,3 ,3 ,5 ,5 ,7 ,7 ,9 ,9 ,11 ,11 ,13 $.
\item $ 1 ,2 ,3 ,4 ,4 ,5 ,5 ,6 ,6 ,7 ,8 ,9 $ and $ 1 ,2 ,3 ,7 ,7 ,8 ,8 ,9 ,9 ,13 ,14 ,15$.
\item $ 1 ,2 ,4 ,5 ,5 ,6 ,8 ,9 ,9 ,10 ,12 ,13$ and $ 1 ,2 ,3 ,3 ,4 ,5 ,7 ,8 ,9 ,9 ,10 ,11$.
\end{itemize}
\end{example}

\section{The $pqr$ Case} \label{sec:pqr}

In this section, we address question \ref{qu:gen} in the case where $n=2$ and $m=pqr$, where $p$, $q$, and $r$ are distinct prime numbers. 
In this case, the frequency polynomial would be
$$\left(\frac{x(x^{pqr}-1)}{x-1}\right)^2=x^2 \phi_p ^2 \phi_q ^2 \phi_r ^2 \phi_{pq} ^2 \phi_{pr} ^2 \phi_{qr} ^2 \phi_{pqr} ^2.$$

Recall that if a polynomial $P(x)$ corresponds to a solution, than in this case we must have $P(1)=pqr$. Thus, such a polynomial has the form
\[x \phi_p\phi_q\phi_r\phi_{pq}^{c_{pq}} \phi_{pr}^{c_{pr}}\phi_{qr}^{c_{qr}} \phi_{pqr}^{c_{pqr}}\]
with integers $0\leq c_p,c_{q},c_r,c_{pq},c_{pr},c_{qr},c_{pqr}\leq 2$. This is because $\phi_p(1)=p$, $\phi_q(1)=q$, $\phi_r(1)=r$, and all other cyclotomic polynomials involved equal $1$ when evaluated at $x=1$.

Thus, Table \ref{tab:init_vectors}  represents all potential pairs of solutions, organized by the sum of the exponents. We omit the standard solution, where all exponents equal 1. We additionally omit repetitions due to symmetry. For instance, 
\[\phi_p \phi_q \phi_r \phi_{pq}^{2}\phi_{qr}\phi_{pqr}\]
and
\[\phi_p \phi_q \phi_r \phi_{pq}\phi_{qr}^{2}\phi_{pqr}\]
are similar in that the later is achieved by swapping $p$ and $q$. Knowing one of these is a solution for all $p$, $q$, and $r$ is equivalent to showing the same for the other by applying an appropriate permutation of $p$, $q$, and $r$. 
 
\begin{center}
\begin{table}[h]
\begin{tabular}{ |c c c c c c c c c| } 
 \hline
 $c_{pq}$ & $c_{pr}$ & $c_{qr}$ & $c_{pqr}$ & $\leftrightarrow$ & $c_{pq}$ & $c_{pr}$ & $c_{qr}$ & $c_{pqr}$ \\ \hline
 0 & 0 & 0 & 0 & $\leftrightarrow$ & 2 & 2 & 2 & 2\\ 
 1 & 0 & 0 & 0 & $\leftrightarrow$ & 1 & 2 & 2 & 2\\ 
 0 & 0 & 0 & 1 & $\leftrightarrow$ & 2 & 2 & 2 & 1\\ 
 2 & 0 & 0 & 0 & $\leftrightarrow$ & 0 & 2 & 2 & 2\\ 
 0 & 0 & 0 & 2 & $\leftrightarrow$ & 2 & 2 & 2 & 0\\ 
 1 & 1 & 0 & 0 & $\leftrightarrow$ & 1 & 1 & 2 & 2\\
 1 & 0 & 0 & 1 & $\leftrightarrow$ & 1 & 2 & 2 & 1\\
 2 & 1 & 0 & 0 & $\leftrightarrow$ & 0 & 1 & 2 & 2\\
 2 & 0 & 0 & 1 & $\leftrightarrow$ & 0 & 2 & 2 & 1\\
 
 1 & 0 & 0 & 2 & $\leftrightarrow$ & 1 & 2 & 2 & 0\\
 1& 1 & 1 & 0 & $\leftrightarrow$ & 1 & 1 & 1 & 2\\
 1& 1 & 0 & 1 & $\leftrightarrow$ & 1 & 1 & 2 & 1\\
 
 2 & 0 & 0 & 2 & $\leftrightarrow$ & 0 & 2 & 2 & 0\\
 
 2 & 1 & 1 & 0 & $\leftrightarrow$ & 0 & 1 & 1 & 2\\
 2 & 1 & 0 & 1 & $\leftrightarrow$ & 0 & 1 & 2 & 1\\

 \hline
\end{tabular}
\caption{The possible exponents for the solutions.}\label{tab:init_vectors}
\end{table}
\end{center}

We shall refer to the tuple $(c_{pq},c_{pr},c_{qr},c_{pqr})$ as an exponent vector. Both $(c_{pq},c_{pr},c_{qr},c_{pqr})$ and $(2-c_{pq},2-c_{pr},2-c_{qr},2-c_{pqr})$ must yield polynomials with positive coefficients for either (and thus both) to be a solution. 

Recall by Lemma \ref{lem:shriram} that a polynomial has a negative coefficient if a factor of $1-x$ appears on the numerator when reduced and written as a rational function. The following is a Corollary that will allow us remove many entries appearing in Table \ref{tab:init_vectors} which have a negative coefficient.
\begin{corollary}\label{cor:oneminusx}
The polynomial with exponent vector $(c_{pq},c_{pr},c_{qr},c_{pqr})$ has a factor of $1-x$ in its rational function if and only if 
\[c_{pq}+c_{pr}+c_{qr}-c_{pqr}-3>0.\]
\end{corollary}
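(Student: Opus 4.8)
### Proof Proposal for Corollary \ref{cor:oneminusx}

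The plan is to track the exponent of the factor $(1-x)$ (equivalently $(x-1)$) in the reduced rational function attached to an exponent vector, exactly as was done for the $p^2q$ case in Corollary \ref{cor:p2q_oneminusx}. First I would recall from Section \ref{sec:cyc} the rational-function expressions for each cyclotomic polynomial appearing in the $pqr$ frequency polynomial. The key observation is that only the factor $(x-1)^{\pm 1}$ contributes: $\phi_p(x) = (x^p-1)/(x-1)$, $\phi_q(x)$, and $\phi_r(x)$ each contribute a single $(x-1)$ in the \emph{denominator}; $\phi_{pq}(x) = (x-1)(x^{pq}-1)/((x^p-1)(x^q-1))$ contributes a single $(x-1)$ in the \emph{numerator}, and likewise for $\phi_{pr}(x)$ and $\phi_{qr}(x)$; and $\phi_{pqr}(x) = (x^p-1)(x^q-1)(x^r-1)(x^{pqr}-1)/((x-1)(x^{pq}-1)(x^{pr}-1)(x^{qr}-1))$ contributes a single $(x-1)$ in the \emph{denominator}. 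Every other factor of the form $x^k - 1$ with $k > 1$ is, when factored over the cyclotomic polynomials via \eqref{eq:prod_cyc}, divisible by $\phi_k$ and $\phi_1 = x-1$ but its order of vanishing at $x=1$ is exactly $1$, so these contribute to the $(x-1)$ count in a way that must be handled carefully.

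More cleanly, I would instead argue by evaluating orders of vanishing at $x = 1$ directly. A polynomial $x^k - 1$ vanishes to order exactly $1$ at $x = 1$. So for the candidate solution polynomial
\[
P(x) = x\,\phi_p(x)\phi_q(x)\phi_r(x)\,\phi_{pq}(x)^{c_{pq}}\phi_{pr}(x)^{c_{pr}}\phi_{qr}(x)^{c_{qr}}\phi_{pqr}(x)^{c_{pqr}},
\]
I compute the order of vanishing of each cyclotomic factor at $x=1$ using \eqref{eq:eval_prime_power}: $\phi_n(1) \neq 0$ whenever $n$ has at least two distinct prime divisors, so $\phi_{pq}, \phi_{pr}, \phi_{qr}, \phi_{pqr}$ all have order $0$ at $x=1$, while $\phi_p, \phi_q, \phi_r$ have order $0$ as well since $\phi_p(1) = p \neq 0$. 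That shows $P$ itself is a genuine polynomial with $P(1) \neq 0$, which is consistent but not yet the statement. The point of the Corollary is about the \emph{reduced rational function form} — i.e., after writing each $\phi_n$ via its $(x^d-1)$-quotient formula and cancelling, whether a literal $(1-x)$ survives in the numerator. So I would count: numerator $(x-1)$'s come from $\phi_{pq}, \phi_{pr}, \phi_{qr}$ (one each, weighted by $c_{pq}, c_{pr}, c_{qr}$); denominator $(x-1)$'s come from $\phi_p, \phi_q, \phi_r$ (one each, total $3$) and from $\phi_{pqr}$ (one, weighted by $c_{pqr}$). Hence the net exponent of $(x-1)$ in the reduced expression is
\[
c_{pq} + c_{pr} + c_{qr} - c_{pqr} - 3,
\]
and this is a positive power of $(1-x)$ in the numerator precisely when that quantity is $> 0$.

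Once the count is established, the Corollary follows immediately from Lemma \ref{lem:shriram}: if $c_{pq} + c_{pr} + c_{qr} - c_{pqr} - 3 > 0$, then the reduced rational function for $P$ has the form $\prod_i (1-x^{n_i})^{\epsilon_i}$ with some $n_j = 1$ and $\epsilon_j > 0$ (the other $n_i$ being the various $p, q, r, pq, \dots$ which are all $> 1$), so $[x]P < 0$ and the exponent vector cannot be a solution. I would then note the converse direction: if the quantity is $\le 0$, no positive power of $(1-x)$ appears in the numerator, which is what the "if and only if" demands. The main obstacle — really a bookkeeping point rather than a deep one — is being careful that the $(x-1)$ factors I am attributing to numerator versus denominator are genuinely in lowest terms, i.e., that no accidental further cancellation of $(x-1)$ occurs between, say, an $(x^{pq}-1)$ in one factor's denominator and the same in another's numerator; but since each such $x^k-1$ has order exactly $1$ at $x=1$ and I have accounted for every occurrence, the tally is exact. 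This mirrors precisely the argument already given for Corollary \ref{cor:p2q_oneminusx}, so I would keep the write-up terse and parallel to that proof.
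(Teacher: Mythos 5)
Your counting of the literal $(1-x)$ factors in the numerator and denominator of each cyclotomic polynomial's rational-function form is exactly the argument the paper gives, and the net exponent $c_{pq}+c_{pr}+c_{qr}-c_{pqr}-3$ matches. The detour through orders of vanishing at $x=1$ is unnecessary (and the observation that $P(1)\neq 0$ is beside the point, since the statement is about the displayed rational-function expression rather than the polynomial's value), but the core bookkeeping you settle on is the paper's proof verbatim.
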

\begin{proof}
When written as rational functions, $\phi_p$, $\phi_q$, $\phi_r$, and $\phi_{pqr}$ have a factor of $1-x$ on the denominator, while $\phi_{pq}$, $\phi_{pr}$, $\phi_{qr}$ have a factor of $1-x$ on the numerator. Thus, the exponent of $1-x$ in the polynomial corresponding to the given exponent vector is $c_{pq}+c_{pr}+c_{qr}-c_{pqr}-3$.

\end{proof}

Due to this, after eliminating the rows satisfying the conditions in the prior Corollary, we have the following.

\begin{center}
\begin{table}[h]
\begin{tabular}{ |c c c c c c c c c| } 
 \hline
 $c_{pq}$ & $c_{pr}$ & $c_{qr}$ & $c_{pqr}$ & $\leftrightarrow$ & $c_{pq}$ & $c_{pr}$ & $c_{qr}$ & $c_{pqr}$ \\ \hline
 1 & 0 & 0 & 0 & $\leftrightarrow$ & 1 & 2 & 2 & 2\\ 
 2 & 0 & 0 & 0 & $\leftrightarrow$ & 0 & 2 & 2 & 2\\ 
 1 & 1 & 0 & 0 & $\leftrightarrow$ & 1 & 1 & 2 & 2\\
 2 & 1 & 0 & 0 & $\leftrightarrow$ & 0 & 1 & 2 & 2\\
 2 & 0 & 0 & 1 & $\leftrightarrow$ & 0 & 2 & 2 & 1\\
 
 1& 1 & 1 & 0 & $\leftrightarrow$ & 1 & 1 & 1 & 2\\
 1& 1 & 0 & 1 & $\leftrightarrow$ & 1 & 1 & 2 & 1\\
 
 
 
 2 & 1 & 0 & 1 & $\leftrightarrow$ & 0 & 1 & 2 & 1\\
 
 \hline
\end{tabular}
\caption{Table \ref{tab:init_vectors} after removing rows satisfying Corollary \ref{cor:oneminusx}.}\label{tab:sec_vectors}
\end{table}
\end{center}

We can further reduce this list, using Theorem \ref{thm:josh_pqr}, which has the following implication. 

\begin{corollary}\label{cor:josh_reduction}
If a given exponent vector, after successive differences with at least one of $(1,1,0,1)$, $(1,0,1,1)$, or $(0,1,1,1)$, has at most one $1$, the corresponding polynomial has positive coefficients. 
\end{corollary}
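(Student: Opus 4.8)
\textbf{Proof proposal for Corollary \ref{cor:josh_reduction}.}

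The plan is to leverage Lemma \ref{thm:josh_pqr} in the same spirit that Lemma \ref{lem:product_piq} and the identities $\phi_q\phi_{pq}=\phi_q(x^p)$, $\phi_p\phi_{pr}=\phi_p(x^r)$, etc., were used earlier: whenever we can factor the rational function for an exponent vector as a product of polynomials \emph{with non-negative coefficients} (genuine honest polynomials, no denominators), we are done, since a product of such polynomials again has non-negative coefficients. The three vectors $(1,1,0,1)$, $(1,0,1,1)$, $(0,1,1,1)$ are precisely the exponent vectors of $\phi_{pq}\phi_{pr}\phi_{pqr}$, $\phi_{pq}\phi_{qr}\phi_{pqr}$, and $\phi_{pr}\phi_{qr}\phi_{pqr}$ respectively. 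By Lemma \ref{thm:josh_pqr} (and its images under permuting $p,q,r$), each of these, when multiplied by the appropriate single extra cyclotomic factor — $\phi_p$, $\phi_q$, or $\phi_r$ — collapses to something of the form $\phi_{(\cdot)}(x^{(\cdot)})$, a polynomial with non-negative (indeed $0/1$) coefficients. So I would first record this reformulation: subtracting one of the three special vectors from a given exponent vector $v$ corresponds to dividing the rational function for $v$ by $\phi_{pq}\phi_{pr}\phi_{pqr}$ (or its siblings), and Lemma \ref{thm:josh_pqr} tells us exactly which ``honest polynomial'' times $\phi_{\bullet}$ this chunk equals.

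Next I would make precise what ``has at most one $1$'' after successive differences buys us. Recall every solution-polynomial already carries the obligatory factor $x\phi_p\phi_q\phi_r$. After peeling off $k$ copies of special vectors from the $(c_{pq},c_{pr},c_{qr},c_{pqr})$-part, the polynomial factors as $x \cdot (\text{product of } k \text{ honest polynomials of the form } \phi_{\bullet}(x^{\bullet})) \cdot \phi_p^{a}\phi_q^{b}\phi_r^{c}\phi_{pq}^{d}\phi_{pr}^{e}\phi_{qr}^{f}\phi_{pqr}^{g}$ where the reduced exponents $d,e,f,g$ are non-negative and the leftover of $a,b,c$ accounts for the $\phi_p,\phi_q,\phi_r$ consumed; but actually the cleanest bookkeeping is: each special-vector subtraction consumes one of $\phi_{pq},\phi_{pr},\phi_{qr}$-type factors \emph{and} one $\phi_{pqr}$ and produces via Lemma \ref{thm:josh_pqr} a clean polynomial, and what is left over is a product of at most one of $\phi_{pq},\phi_{pr},\phi_{qr}$ or of $\phi_{pqr}$, each of which is on its own a polynomial with non-negative coefficients. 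Thus the ``at most one $1$'' condition exactly guarantees the residual factor is a single cyclotomic polynomial $\phi_{pq}$, $\phi_{pr}$, $\phi_{qr}$, or $\phi_{pqr}$ — all of which have non-negative coefficients — times the non-negative product assembled from the Lemma. Multiplying everything, including the surviving $x$, gives a polynomial with non-negative coefficients, hence a solution.

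The main obstacle, and the step to execute carefully, is the bookkeeping of which cyclotomic factors get ``used up'' and in what order: subtracting, say, $(1,1,0,1)$ requires that the current vector actually have $c_{pq}\ge 1$, $c_{pr}\ge 1$, $c_{pqr}\ge 1$ so that the division stays polynomial, and one must also track the implicit $\phi_p,\phi_q,\phi_r$ factors (of which the solution has exactly one each) so that Lemma \ref{thm:josh_pqr} can be applied legitimately — we have only one $\phi_p$ available, so we can apply the $\phi_p$-flavored identity only once, and similarly for $q,r$. So I would phrase the argument as: order the special-vector subtractions so that distinct flavors ($p$-, $q$-, $r$-flavored, i.e. the three vectors above) are used at most once each, check that non-negativity of exponents is preserved at each stage (this is automatic from ``successive differences'' being well-defined), and then observe the residue has a single surviving non-$\phi_p,\phi_q,\phi_r$ cyclotomic factor. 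The potential pitfall is a vector where the only way to reach ``at most one $1$'' would require using the same flavor twice; I would note that this cannot happen for the vectors in Table \ref{tab:sec_vectors} because each $c_{pq},c_{pr},c_{qr}$ is at most $2$, so at most one subtraction of each flavor is ever needed. With that, the corollary follows by applying it row-by-row to eliminate the relevant entries of Table \ref{tab:sec_vectors}.
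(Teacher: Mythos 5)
Your approach is structurally the same as the paper's: peel off copies of the three special vectors via Lemma~\ref{thm:josh_pqr}, then argue the residual is harmless. The gap is in the final step, where you assert that the residual factor, being one of $\phi_{pq}$, $\phi_{pr}$, $\phi_{qr}$, or $\phi_{pqr}$, is ``on its own a polynomial with non-negative coefficients.'' That is false. For distinct primes these cyclotomic polynomials generically have negative coefficients: $\phi_6(x)=x^2-x+1$, $\phi_{15}(x)=x^8-x^7+x^5-x^4+x^3-x+1$, and $\phi_{105}$ famously has a coefficient equal to $-2$; $\phi_{pqr}$ can have arbitrarily large negative coefficients. So the residual is not innocuous by itself, and your argument stops one step short.

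The paper closes exactly this gap by pairing the residual with a still-unused copy of $\phi_p$, $\phi_q$, or $\phi_r$ via identity~\eqref{eq:cyc_gen}: for instance a residual of $(1,0,0,0)$ combined with the leftover $\phi_p$ gives $\phi_p\phi_{pq}=\phi_p(x^q)$, which has $\{0,1\}$ coefficients; a residual such as $(0,0,2,0)$ with leftover $\phi_q$ and $\phi_r$ gives $\phi_q(x^r)\phi_r(x^q)$. You already set up the right bookkeeping (each of the three flavored subtractions consumes exactly one of $\phi_p,\phi_q,\phi_r$, and at most two flavors can be used since the full sum $(2,2,2,3)$ exceeds the available exponents), so the missing ingredient is just to carry that through: show that after the subtractions an appropriate prime cyclotomic is always left over to absorb the residual via \eqref{eq:cyc_gen}, and note that for the vectors surviving to Table~\ref{tab:sec_vectors} the residual is never a bare power of $\phi_{pqr}$, for which no such pairing exists. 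With that final pairing step spelled out, the argument matches the paper's and is complete.
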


\begin{proof}
For each removal of such a vector which is legal, this corresponds to an application of Theorem \ref{thm:josh_pqr}. For instance, $(c_{pq},c_{pr},c_{qr},c_{pqr})-(1,1,0,1)$ corresponds to applying the identity $\phi_p\phi_{pq}\phi_{pr}\phi_{pqr}=\phi_p(x^{qr})$, which has positive coefficients. If after applying all differences we end with vectors such as $(1,0,0,0)$, $(0,1,0,0)$, or $(0,0,1,0)$, these can be combined with $\phi_p$, $\phi_q$, or $\phi_r$ using the identity from equation \ref{eq:cyc_gen} to achieve a polynomial with positive coefficients. 
\end{proof}

After applying Corollary \ref{cor:josh_reduction} or equation \ref{eq:cyc_gen} to entries in Table \ref{tab:sec_vectors}, we are left with the following which can not be guaranteed to have positive coefficients.

\begin{center}
\begin{table}[h]
\begin{tabular}{ |c c c c c c c c c| } 
 \hline
 $c_{pq}$ & $c_{pr}$ & $c_{qr}$ & $c_{pqr}$ & $\leftrightarrow$ & $c_{pq}$ & $c_{pr}$ & $c_{qr}$ & $c_{pqr}$ \\ \hline
 2 & 0 & 0 & 0 & $\leftrightarrow$ & 0 & 2 & 2 & 2\\ 
 2 & 1 & 0 & 0 & $\leftrightarrow$ & 0 & 1 & 2 & 2\\
 2 & 0 & 0 & 1 & $\leftrightarrow$ & 0 & 2 & 2 & 1\\
 
 1& 1 & 1 & 0 & $\leftrightarrow$ & 1 & 1 & 1 & 2\\
 
 
 
 
 \hline
\end{tabular}
\caption{Table \ref{tab:sec_vectors} after removing rows where both solutions can be guaranteed to have positive coefficients using Corollary \ref{cor:josh_reduction} and the identity in equation \ref{eq:cyc_gen}.}\label{tab:third_vectors}
\end{table}
\end{center}

In what remains in Table \ref{tab:third_vectors}, we claim each row has an entry corresponding to a polynomial with a negative coefficient. We prove this in four separate results by expressing the corresponding polynomials as products of series. In the following proofs, we omit the formulation of these products, though the rough computation can be found in Appendix \ref{sec:comp_pqr}.

\begin{lemma}
The polynomial corresponding to exponent vector $(0,2,2,2)$, which we denote $A_{0222}$, has a negative coefficient.
\end{lemma}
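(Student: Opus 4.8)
The plan is to mimic the structure of the already-completed proofs for the $p^2q$ case: first reduce the product of cyclotomic polynomials attached to the exponent vector $(0,2,2,2)$ to a manageable product of (finite or infinite) formal series with explicit $(1-x^{n})$ factors, and then isolate a single coefficient that is forced to be negative. Recall the polynomial in question is $x\,\phi_p\phi_q\phi_r\,\phi_{pr}^2\phi_{qr}^2\phi_{pqr}^2$. Using the rational-function expressions from Section~\ref{sec:cyc} for $\phi_{pq}$, $\phi_{pr}$, $\phi_{qr}$, $\phi_{pqr}$, together with $\phi_p=\tfrac{x^p-1}{x-1}$ (and similarly for $q,r$), I would cancel common factors of $x^d-1$ across numerator and denominator. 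After the dust settles I expect something of the shape
\[
A_{0222}=(1-x^{p})^{a}(1-x^{q})^{b}(1-x^{r})^{c}(1-x^{pqr})^{d}\left(\sum x^{i}\right)^{e}\left(\sum x^{iq r}\right)^{f}\left(\sum x^{i p r}\right)^{g}\left(\sum x^{i p q}\right)^{h}\cdots
\]
for small nonnegative integers, with the key point being that $1-x$ itself does \emph{not} appear on the numerator (consistent with Corollary~\ref{cor:oneminusx}, since $0+2+2-2-3=-1<0$), so the easy obstruction is unavailable and a genuine coefficient computation is needed.

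Next I would choose a small exponent $N$ — by analogy with the $A_{2022}$ and $A_{2012}$ arguments, something like $N=p+\lfloor p/q\rfloor q$ or $N=2p-1$ or a similar quantity built from the two smallest primes — chosen so that $N$ is smaller than all the ``large'' periods $pq$, $pr$, $qr$, $pqr$, $p^2$ that occur, so that the infinite series with those periods contribute nothing to $[x^N]$. Then $[x^N]A_{0222}$ collapses to a finite sum coming only from the low-degree factors: the $(1-x^p)$, $(1-x^q)$, $(1-x^r)$ pieces, the $\left(\sum x^i\right)^e$ piece, and possibly one series of period $q$ or $r$. Expanding $\left(\sum x^i\right)^e$ via the standard formula $[x^k]\left(\sum x^i\right)^e=\binom{k+e-1}{e-1}$ and convolving against the handful of monomials from the $(1-x^{\bullet})$ factors, I expect the coefficient to reduce, after cancellation, to a manifestly negative quantity — e.g. a single term like $-p$ or $-1$, exactly as in the companion lemmas. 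The WLOG convention already in force ($p,q,r$ playing symmetric roles up to permutation, and the freedom to assume an ordering among them) will let me fix, say, $p$ as the smallest prime to make the inequalities $N<pq$, etc., clean.

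The main obstacle I anticipate is two-fold. First, the algebraic simplification of $\phi_p\phi_q\phi_r\,\phi_{pr}^2\phi_{qr}^2\phi_{pqr}^2$ is more intricate than in the $p^2q$ case because three distinct primes and three ``two-prime'' cyclotomics interact; keeping careful track of which $x^d-1$ factors survive and with what multiplicity is error-prone, and I would double-check it against the ``rough computation'' promised in Appendix~\ref{sec:comp_pqr}. Second — and this is the real subtlety — unlike $2p-1<p^2$, the target exponent $N$ may or may not be dominated by the various periods depending on the relative sizes of $p,q,r$, so I expect to need a short case split (mirroring the $q>p$ vs.\ $q<p$ split in the $A_{1102}$ and $A_{2022}$ proofs) to guarantee that the unwanted series never reach degree $N$; in the ``bad'' cases I would instead pick a different small $N$ adapted to that regime. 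Once the correct $N$ is pinned down in each regime, the remaining computation is a routine finite convolution, and the negativity should fall out as a clean closed form.
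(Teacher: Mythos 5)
Your overall method — simplify $A_{0222}$ to a product of $(1-x^n)$ factors and geometric series, then isolate a coefficient forced to be negative, with a case split on the ordering of $p,q,r$ — is exactly the paper's strategy. The paper's simplification is
\[
A_{0222}=(1-x^p)(1-x^q)(1-x^{pqr})^2\Big(\sum x^i\Big)\Big(\sum x^{ir}\Big)\Big(\sum x^{ipq}\Big)^2,
\]
and your guessed template is consistent with this. Your easy case (``$r$ largest'') also matches the paper: with $M=\max(p,q)$ the series of period $r$, $pq$, $pqr$ all start above $M$, and $[x^M]A_{0222}=-1$ falls out immediately.

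The gap is in the remaining cases, and it is a real one, not a routine convolution. When $p$ (say) is the largest prime, the unavoidable factor $\sum x^{ir}$ has period $r<p$, so \emph{no} choice of a small target exponent $N$ escapes its contribution; the ``pick $N$ below all the large periods'' heuristic fails. Your proposal signals this risk but does not identify what replaces it. The paper's actual idea is to study $[x^{p+q-j}]$ (when $r<q<p$) or $[x^{p+j}]$ (when $q<r<p$) for a \emph{specifically chosen} $j$: one selects the minimal $j$ making $q-j$ or $p-j$ (respectively $p+j-q$) a multiple of $r$, and then uses the floor-function identity $\lfloor x+n\rfloor=\lfloor x\rfloor+n$ (for integer $n$) to force
\[
-\Big\lfloor \tfrac{q-j}{r}\Big\rfloor-\Big\lfloor \tfrac{p-j}{r}\Big\rfloor+\Big\lfloor \tfrac{p+q-j}{r}\Big\rfloor=0,
\]
leaving a net $-1$. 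This divisibility-driven choice of $j$, and the bookkeeping of three floor terms rather than one, is the genuine content of the lemma; your candidate exponents ($2p-1$, $p+\lfloor p/q\rfloor q$) and the plan to expand $(\sum x^i)^e$ by binomial coefficients do not address it. To close the gap you need to commit to the paper's device (or an equivalent one): choose the target exponent so that one numerator in the floor expressions becomes an exact multiple of $r$, making the surplus from the period-$r$ series cancel exactly against the deficits.
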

\begin{proof}

After simplification, one can write $A_{0222}$ as the following product: 
\[(1-x^p)(1-x^q)(1-x^{pqr})^2\left(\displaystyle \sum_{i=0}^\infty  x^i\right)\left(\displaystyle \sum_{i=0}^\infty  x^{ir}\right)\left(\displaystyle \sum_{i=0}^\infty  x^{ipq}\right)\]
We proceed by cases based on the relative values of $p$, $q$, and $r$. 

Suppose first $r$ is the largest. In this case, we consider $[x^M]A_{0222}$ where $M=\max(p,q)$. Note that $M<r$, $M<pq$, and $M<pqr$, so we have
\begin{align*}
[x^M]A_{0222}&=[x^M](1-x^p)(1-x^q)\left(\displaystyle \sum_{i=0}^\infty  x^i\right)\\
&=[x^M](-x^px^{M-p}-x^qx^{M-q}+x^M)\\
&=-1.
\end{align*}

 Suppose now that $p$ or $q$ is the largest. Note that $A_{0222}$ is symmetric in $p$ and $q$, so it suffices to prove this in just one case. To this end, we will assume $p$ is the largest. We proceed by cases on the relative values of $r$ and $q$.
\begin{itemize}
\item[\textbf{Case 1:}] 
First, suppose $r<q<p$. We claim there exists a $j\in[1,r-1]$ so that $[x^{p+q-j}]A_{0222}$ is negative. Note that $p+q-j<pqr$ and $p+q-j<pq$, so  

\begin{align*}
[x^{p+q-j}]A_{0222}&=[x^{p+q-j}](1-x^p)(1-x^q)\left(\displaystyle \sum_{i=0}^\infty  x^i\right)\left(\displaystyle \sum_{i=0}^\infty  x^{ir}\right)\\
&=[x^{p+q-j}]\left(-\sum_{i=0}^{\left\lfloor{q-j\over r} \right\rfloor} x^px^{ir}x^{q-j-ir}-\sum_{i=0}^{\left\lfloor{p-j\over r} \right\rfloor} x^qx^{ir}x^{p-j-ir}\right.\\
&\left.+\sum_{i=0}^{\left\lfloor{p+q-j\over r} \right\rfloor} x^{ir}x^{p+q-j-ir}\right)\\
&=-\left\lfloor{q-j\over r} \right\rfloor-\left\lfloor{p-j\over r} \right\rfloor+\left\lfloor{p+q-j\over r} \right\rfloor-1.
\end{align*}
Note that $r$, $q$, and $p$ are all prime, so $r$ does not divide the other two. To this end, let $j$ be selected minimally so that $q-j$ or $p-j$ is divisible by $r$. By symmetry, we may suppose that $j$ is minimized by making $q-j$ divisible by $r$. We necessarily have that $j\in[1,r-1]$. Then using the identity that 
\[\left\lfloor x+n\right\rfloor=\left\lfloor x\right\rfloor +n\]
for all integers $n$, we have know that 
\[\left\lfloor{p+q-j\over r} \right\rfloor=\left\lfloor{p\over r} \right\rfloor+{q-j\over r}.\]
Due to our choice of $j$, this means that \[\left\lfloor{p\over r} \right\rfloor=\left\lfloor{p-j\over r} \right\rfloor\]

From this, we can conclude that 
\[-\left\lfloor{q-j\over r} \right\rfloor-\left\lfloor{p-j\over r} \right\rfloor+\left\lfloor{p+q-j\over r} \right\rfloor=0\]
and so $[x^{p+q-j}]A_{0222}=-1$.

\item[\textbf{Case 2:}] 
Now suppose that $q<r<p$. In this case, we choose a non-negative integer $j<q$ and study $[x^{p+j}]A_{0222}$. Observe that 
\begin{align*}
[x^{p+j}]A_{0222}&=[x^{p+j}](1-x^p)(1-x^q)\left(\displaystyle \sum_{i=0}^\infty  x^i\right)\left(\displaystyle \sum_{i=0}^\infty  x^{ir}\right)\\
&=[x^{p+j}]\left(-x^px^j-\sum_{i=0}^{\left\lfloor {p+j-q\over r}\right\rfloor}x^qx^{ir}x^{p+j-q-ir}+\sum_{i=0}^{\left\lfloor {p+j\over r}\right\rfloor}x^{ir}x^{p+j-ir}\right)\\
&=-\left\lfloor {p+j-q\over r}\right\rfloor+\left\lfloor {p+j\over r}\right\rfloor-1.
\end{align*}

If the interval of integers $[p-q+1,p]$ does not have a multiple of $r$, we have 
\[\left\lfloor {p-q\over r}\right\rfloor=\left\lfloor {p\over r}\right\rfloor,\]
and so in this case, by letting $j=0$, we have $[x^p]A_{0222}=-1$. Otherwise, choose $j$ so that $j\leq q$ and $r$ divides $(p+j-q)$. Note that, in fact, $j\leq q-1$, since if we let $j=q$ this would imply that $r$ divides $p$. Since $q<r$, this implies 
\[\left\lfloor {p-q+j\over r}\right\rfloor=\left\lfloor {p+j\over r}\right\rfloor,\]
so $[x^{p+j}]A_{0222}=-1.$

\end{itemize}

\end{proof}

\begin{lemma}\label{lem:pqr_0122}
The polynomial corresponding to exponent vector $(0,1,2,2)$, which we denote $A_{0122}$, has negative coefficients. 
\end{lemma}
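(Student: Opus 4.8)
The plan is to follow the same template used for $A_{0222}$: first simplify the product of cyclotomic rational functions corresponding to the exponent vector $(0,1,2,2)$ into a product of a few polynomial factors $(1-x^{n})$ times a few formal series $\sum x^{in}$, and then exhibit, in each range of relative sizes of $p$, $q$, $r$, a specific exponent $M$ (built from $p$, $q$, $r$ and floor functions) whose coefficient in $A_{0122}$ is negative. For the simplification step I would start from the reduced forms of $\phi_{p}$, $\phi_q$, $\phi_r$, $\phi_{pq}$, $\phi_{qr}$, $\phi_{pqr}$ as rational functions in the $1-x^{i}$ variables, raise $\phi_{qr}$ and $\phi_{pqr}$ to the second power, cancel common factors, and convert the surviving negative-exponent factors into series; the rough computation is of the type the paper has already deferred to its Appendix \ref{sec:comp_pqr}, so I would just state the resulting product and push on.

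Once $A_{0122}$ is written as a finite product of $(1-x^{n})$'s and series, the combinatorial heart of the argument is choosing the right target exponent $M$. Guided by the $A_{0222}$ proof, I expect to split into cases according to which of $p$, $q$, $r$ is largest and, within those, the order of the remaining two primes. In each case I would pick $M$ small enough that the high-degree series factors (those with step $pq$, $pr$, $qr$, or $pqr$) do not contribute, so that $[x^{M}]A_{0122}$ reduces to a short alternating sum of floor functions of the form $-\lfloor\tfrac{M-p}{n}\rfloor-\lfloor\tfrac{M-q}{n}\rfloor+\lfloor\tfrac{M}{n}\rfloor-1$ (or an analogous expression), and then use the primality of $p,q,r$ together with the identity $\lfloor x+k\rfloor=\lfloor x\rfloor+k$ for integer $k$ to choose $M$ making the floor terms cancel, leaving $-1$. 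This is exactly the mechanism that made the $r$-largest and the two sub-cases of the $p$-largest analysis work for $A_{0222}$, and I would reuse it essentially verbatim where the factor structure matches, modifying the shifts only where the replacement of $\phi_p$ by $\phi_q$ (i.e.\ the difference between exponent vectors $(0,2,2,2)$ and $(0,1,2,2)$) changes which series survives.

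The main obstacle I anticipate is bookkeeping rather than a conceptual gap: after raising to squares and cancelling, $A_{0122}$ will have a slightly different set of surviving factors than $A_{0222}$ (in particular one fewer power of some $(1-x^{n})$ and possibly an extra low-degree series), so the clean "two minus-terms and one plus-term" floor identity may need an adjustment — an extra term, or a parity/divisibility argument in a boundary case such as $p=2$ or $q=3$, mirroring the small-prime checks in the $A_{1102}$ proof. I would handle these by the same device used there: when the floor cancellation leaves a quantity like $q-(\lfloor q/n\rfloor+1)n$, observe it is the difference between $q$ and the largest multiple of $n$ below $q$, hence negative since $n\nmid q$; and when it leaves an even number that is nonzero because $q\neq n^{2}$ (or similar), conclude negativity from parity. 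So the write-up should be: (i) state the simplified product for $A_{0122}$, citing the appendix; (ii) do the largest-prime case split; (iii) in each case name $M$, compute $[x^{M}]A_{0122}$ as a floor-sum, and invoke the floor identity plus primality to get $-1$; (iv) dispatch the handful of small-prime exceptions by parity. I do not expect any step to require a genuinely new idea beyond those already deployed for $A_{0222}$ and $A_{1102}$.
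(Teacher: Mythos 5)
Your plan matches the paper's proof in its essentials: simplify $A_{0122}$ to $(1-x^p)^2(1-x^q)(1-x^{pqr})^2\bigl(\sum x^i\bigr)^2\bigl(\sum x^{ipr}\bigr)\bigl(\sum x^{ipq}\bigr)^2$, case-split on which prime is largest, and show a target coefficient is negative by floor-sum arithmetic and $pr\nmid q$; the paper fixes the target exponent at $x^{p+q-1}$ throughout rather than shifting by a variable $j$. Your hedge that the $A_{0222}$ template would need adaptation is well-founded — here the surviving series have steps $1$, $pr$, $pq$ instead of $1$, $r$, $pq$, so the argument is structurally closer to the paper's $A_{1102}$ proof (fixed exponent, sub-cases on whether $pr$ exceeds $q$) than to the $j$-shifting device of $A_{0222}$, but the cancellation and non-divisibility mechanisms you identify are exactly the ones used.
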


\begin{proof}
After simplification, $A_{0122}$ is
\[(1-2x^p+x^{2p})(1-x^q)(1-x^{pqr})^2\left(\displaystyle \sum_{i=0}^\infty  (i+1)x^i\right)\left(\displaystyle \sum_{i=0}^\infty  x^{ipr}\right)\left(\displaystyle \sum_{i=0}^\infty  x^{ipq}\right)^2.\]

We proceed by cases on the relative values of $p$, $q$, and $r$. First, suppose that $r$ is the largest. Note that $p+q-1<pq<pr$. Also $p+q-1<2p$ if and only if $q-1<p$ if and only if $q< p$ (since $q$ and $p$ are distinct). Thus, $[x^{p+q-1}]A_{0122}$ depends on the relative value of $p$ and $q$. Let $\delta(\bullet)$ be the kronecker delta, returning 0 if $\bullet$ is false and 1 if $\bullet$ is true. Then
\begin{align*}
[x^{p+q-1}]A_{0122}&=[x^{p+q-1}](1-2x^p+x^{2p})(1-x^p)\left(\displaystyle \sum_{i=0}^\infty (i+1)x^i\right)\\
&=[x^{p+q-1}](-2x^p\cdot qx^{q-1}-x^q\cdot px^{p-1}+(q+p)x^{p+q-1}+\delta(p<q)x^{2p}(q-p)x^{q-p-1})\\
&=-q+\delta(p<q)(q-p)
\end{align*}
which is less than 0 regardless of the relative values of $p$ and $q$.

Now suppose $p$ is the largest among the prime numbers. Note we still have that $p+q-1<pr$, $p+q-1<pq$, and this time we always have $p+q-1<2p$. Thus, $[x^{p+q-1}]A_{0122}<0$ in this case as well. 

All that remains is the case where $q$ is the largest. We again study the coefficient of $x^{q+p-1}$. We still have that $p+q-1<pq$. However,  $p+q-1>2p$, and it is possible that $pr<q+p-1$. We proceed carefully in two cases. 

\begin{itemize}
\item[\textbf{Case 1:}] Suppose $pr>q$.  Note we can not have $ipr<q+p-1$ for $i\geq 2$ as then $2\leq i<{q+p-1\over pr}<2$, since $pr>q$. Thus, we have
\begin{align*}
[x^{p+q-1}]A_{0122}&=[x^{p+q-1}](1-2x^p+x^{2p})(1-x^q)\big(\sum(i+1)x^i\big)\left(\displaystyle \sum_{i=0}^\infty  x^{ipr}\right)\left(\displaystyle \sum_{i=0}^\infty  x^{ipq}\right)^2\\
&=[x^{p+q-1}](-2x^p\cdot qx^{q-1}-x^q\cdot px^{p-1}+(q+p)x^{p+q-1}
\\&+x^{2p}(q-p)x^{q-p-1}+\delta(pr\leq q+p-1)x^{pr}(q+p-pr)x^{q+p-pr-1}\\
&=-p+\delta(pr\leq q+p-1)(q+p-pr)
\end{align*}
which is always negative since $q<pr$.

\item[\textbf{Case 2:}] Suppose now that $q>pr$. (Note this implies $q>2p$ as well.) In this case, our coefficient can be much more complicated. Indeed, this time we have

\begin{align*}
[x^{p+q-1}]A_{0122}&=[x^{p+q-1}]\left(-2\sum_{i=0}^{\left\lfloor {q-1\over pr}\right\rfloor}  x^px^{ipr}(q-ipr)x^{q-ipr-1}-x^q\cdot px^{p-1}\right.\\
&\left.+\sum_{i=0}^{\left\lfloor {q-p-1\over pr}\right\rfloor}  x^{2p}x^{ipr}(q-p-ipr)x^{q-p-ipr-1}+\sum_{i=0}^{\left\lfloor {q+p-1\over pr}\right\rfloor}  x^{ipr}(q+p-ipr)x^{q+p-ipr-1}\right)\\
&=-p-2\sum_{i=0}^{\left\lfloor {q-1\over pr}\right\rfloor}  (q-ipr)+\sum_{i=0}^{\left\lfloor {q-p-1\over pr}\right\rfloor}  (q-p-ipr)+\sum_{i=0}^{\left\lfloor {q+p-1\over pr}\right\rfloor}  (q+p-ipr).
\end{align*}

Note that the limits of the sums must pairwise differ by at most one, and together be at most two distinct integers, since the numerators span the interval $[q-p-1,q+p-1]$, which has length $2p$ and the denominators are $pr$ which is greater than or equal to $ 2p$. That is, we must have \[{\left\lfloor {q+p-1\over pr}\right\rfloor}={\left\lfloor {q-1\over pr}\right\rfloor}\]
or  \[{\left\lfloor {q-1\over pr}\right\rfloor}={\left\lfloor {q-p-1\over pr}\right\rfloor}.\]

If both equalities hold, the above sum becomes $-p$. If we only have ${\left\lfloor {q+p-1\over pr}\right\rfloor}={\left\lfloor {q-1\over pr}\right\rfloor}$, then the above sum becomes 
\[-(q-{\left\lfloor {q-1\over pr}\right\rfloor}pr)\]
which is negative since $pr$ does not divide $q$, so necessarily $\left\lfloor {q-1\over pr}\right\rfloor\leq \left\lfloor {q\over pr}\right\rfloor<{q\over pr}$.
Finally, if we only have ${\left\lfloor {q-1\over pr}\right\rfloor}={\left\lfloor {q-p-1\over pr}\right\rfloor}$, the above sum becomes 
\[q-\left\lfloor {q+p-1\over pr}\right\rfloor pr=q-\left(\left\lfloor {q-1\over pr}\right\rfloor +1\right)pr=q-\left\lfloor {q\over pr}\right\rfloor pr -pr.\]

The final equality is true since $pr$ does not divide $q$. This must be negative since $\left\lfloor {q\over pr}\right\rfloor pr $ is the largest integer multiple of $pr$ smaller than $q$.
\end{itemize}

\end{proof}

\begin{lemma}
The polynomial corresponding to exponent vector $(2,0,0,1)$, which we denote $A_{2001}$, has a negative coefficient. 
\end{lemma}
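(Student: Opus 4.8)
The plan is to simplify $A_{2001}$, exploit the $p\leftrightarrow q$ symmetry, and then exhibit a negative coefficient in each of two size regimes, much as in the preceding lemmas. Substituting the identities of Section~\ref{sec:cyc} into $x\phi_p\phi_q\phi_r\phi_{pq}^2\phi_{pqr}$ and cancelling common factors (the bookkeeping goes in Appendix~\ref{sec:comp_pqr}) yields
\[A_{2001}=x\,\frac{(1-x^r)^2(1-x^{pq})(1-x^{pqr})}{(1-x)^2(1-x^{pr})(1-x^{qr})},\]
equivalently $A_{2001}=x(1-x^r)^2(1-x^{pq})(1-x^{pqr})\big(\sum(i+1)x^i\big)\big(\sum x^{ipr}\big)\big(\sum x^{iqr}\big)$. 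The exponent of $1-x$ here is $-2$, matching Corollary~\ref{cor:oneminusx} applied to the vector $(2,0,0,1)$. Since this expression is invariant under swapping $p$ and $q$, assume $p<q$.

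\emph{Case $2r>pq$} (which forces $r>q$, so $r$ is largest). I would read off $[x^{2r}]A_{2001}$. Since $2r-1$ is strictly smaller than each of $pr$, $qr$, $pqr$ and than $2r$, only $1$ and $-2x^r$ from $(1-x^r)^2$, the constant terms of the three series, and the $1$ from $1-x^{pqr}$ can contribute; and because $pq\le 2r-1$ the term $-x^{pq}$ also contributes. A short computation modulo $x^{2r}$ gives $[x^{2r}]A_{2001}=pq-2r$ when $r<pq$ and $[x^{2r}]A_{2001}=-pq$ when $r>pq$, both negative.

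\emph{Case $2r<pq$.} Here I would use $[x^{pq+r}]A_{2001}$. By $\phi_p\phi_{pq}=\phi_p(x^q)$ and $\phi_{pq}\phi_{pqr}=\phi_{pq}(x^r)$ (equation~\eqref{eq:cyc_gen}), $A_{2001}=x(1+x+\dots+x^{pq-1})(1+x+\dots+x^{r-1})\phi_{pq}(x^r)$. Using that $\phi_{pq}(x)/(1-x)$ is a $\{0,1\}$-series whose $k$th coefficient is $1$ exactly when $k$ lies in the numerical semigroup $\langle p,q\rangle$, set $t_i=1$ if $i\ge0$ and $\lfloor i/r\rfloor\in\langle p,q\rangle$ and $t_i=0$ otherwise, and $W(m)=\sum_{j=0}^{r-1}t_{m-j}$; then $A_{2001}/x=(1-x^{pq})\big(\sum_m W(m)x^m\big)$, so $[x^{pq+r}]A_{2001}=W(pq+r-1)-W(r-1)=W(pq+r-1)-r$. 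Counting the window $[pq,\,pq+r-1]$ gives $W(pq+r-1)=(r-\gamma)\,\mathbf 1[U\in\langle p,q\rangle]+\gamma\,\mathbf 1[U+1\in\langle p,q\rangle]$, where $U=\lfloor pq/r\rfloor$ and $\gamma=pq-rU\in\{1,\dots,r-1\}$. Hence $[x^{pq+r}]A_{2001}\le0$, and it is negative unless both $U$ and $U+1$ lie in $\langle p,q\rangle$.

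It remains to rule out $U,U+1\in\langle p,q\rangle$. If $r>q$ then $2<pq/r<p$, so $2\le U\le p-1$ and $U\notin\langle p,q\rangle$. If $p<r<q$ then $U\in\{p,\dots,q-1\}$, whose only elements of $\langle p,q\rangle$ are the multiples of $p$, so $U,U+1\in\langle p,q\rangle$ would force $p\mid U$ and $p\mid U+1$ (the borderline case $U=q-1$ is excluded, since $\lfloor pq/r\rfloor\ge q-1$ would force $r\le p$). The remaining case $r<p$ is the real obstacle: now $U=\lfloor pq/r\rfloor\ge q+1$ sits deep among the gaps of $\langle p,q\rangle$, and one must show the short interval $\big(pq/(U+1),\,pq/U\big]$ cannot contain a prime $\ne p,q$ for which $U,U+1$ are both in $\langle p,q\rangle$. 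My approach would be to observe that $p\nmid U$ and $p\nmid U+1$ (because $r<p$ and $r\nmid pq$ make $pq\bmod r$ and $r-(pq\bmod r)$ nonzero modulo $p$), deduce from the Apéry structure of $\langle p,q\rangle$ modulo $p$ that both $U$ and $U+1$ have positive $q$-part at most $\lfloor(p-1)/r\rfloor$, and combine this with the exact equation $r\lfloor pq/r\rfloor=pq-(pq\bmod r)$ to reach a contradiction. This is exactly the sort of floor-and-divisibility endgame that closes the earlier lemmas (compare the ``$\lfloor q/pr\rfloor pr$ is the largest multiple of $pr$ below $q$'' step in Lemma~\ref{lem:pqr_0122}); lacking a clean version, one can instead, as in the last case of that lemma, expand the relevant coefficient of $A_{2001}/x$ near $x^{pq}$ directly as an alternating sum of floor terms and bound it by hand.
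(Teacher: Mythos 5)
Your Case~$2r>pq$ and the bulk of Case~$2r<pq$ are sound (and the reduction $A_{2001}/x=(1+x+\dots+x^{pq-1})(1+x+\dots+x^{r-1})\phi_{pq}(x^r)$, exploiting that ${(1-x^{pq})/((1-x^p)(1-x^q))}$ is the Hilbert series of $\langle p,q\rangle$, is a genuinely different and rather elegant route compared to the paper's direct floor-sum manipulations). However, the argument does not close: you reduce the matter to showing that $U=\lfloor pq/r\rfloor$ and $U+1$ cannot both lie in $\langle p,q\rangle$, you dispose of this when $r>q$ and when $p<r<q$, and then you explicitly concede that the subcase $r<p<q$ is "the real obstacle," sketching an Ap\'ery-set strategy that you describe as lacking a clean version. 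As written, therefore, $[x^{pq+r}]A_{2001}\le 0$ is all you have in that regime, and $\le 0$ does not give a negative coefficient. This is a genuine gap: either the semigroup obstruction for $r<p<q$ must actually be proved, or a different coefficient must be exhibited there.

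For comparison, the paper also uses the coefficient of $x^{pq+r-1}$ (your $[x^{pq+r}]$ after accounting for the leading $x$), but its case split is "$r$ largest'' versus "$r$ not largest.'' In the first case it bounds the coefficient above by $pq+2r-pr-qr<0$ outright; in the second it expands the coefficient as an alternating sum of floor terms and observes that, grouping the $pr$-sums and the $qr$-sums separately, each group is negative by the identical endgame used in Case~2 of Lemma~\ref{lem:pqr_0122} (with $q\mapsto pq$, $p\mapsto r$). That reuse is precisely what makes the paper's $r<p$ situation fall out without new ideas, whereas your reformulation in terms of membership in $\langle p,q\rangle$ pushes the difficulty into a divisibility/Ap\'ery statement that is not established. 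If you want to keep your approach, I'd suggest either proving the needed statement (note you do correctly get $p\nmid U$ and $p\nmid U+1$ from $0<\gamma<r<p$, which is a good start but not enough by itself), or falling back in the $r<p$ subcase to the paper's floor-sum computation, as your last sentence suggests.

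One small presentational point: in your Case~$2r>pq$, when $r>pq$ the cross term $(-2x^r)(-x^{pq})=2x^{r+pq}$ also contributes to $[x^{2r-1}]$ (since $r+pq\le 2r-1$ there); your stated values $-pq$ and $pq-2r$ are correct and do include this contribution, but the list of contributing terms in the prose omits it, so it is worth naming it explicitly.
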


\begin{proof}
After simplification, the corresponding polynomial is 


\[(1-2x^r+x^{2r})(1-x^{pq})(1-x^{pqr})\left(\displaystyle \sum_{i=0}^\infty  (i+1)x^i\right)\left(\displaystyle \sum_{i=0}^\infty  x^{ipr}\right)\left(\displaystyle \sum_{i=0}^\infty  x^{iqr}\right)\]

We will study the coefficient of $x^{r+pq-1}$. First, we suppose $r$ is the largest among the three prime numbers. In this case, note that $pq<pr$ and  ${r+pq\over pr}<2$. We have the same identities with $qr$ in place of $pr$. Thus, we have 
\begin{align*}
[x^{pq+r-1}] A_{2001}&=[x^{pq+r-1}]\left(-2x^r\cdot pqx^{pq-1}-x^{pq}\cdot rx^{r-1}+(pq+r)x^{pq+r-1}\right.\\
& \left. +\left\lfloor{r+pq\over pr}\right\rfloor x^{pr}\cdot (r+pq-pr)x^{r+pq-pr-1} +\left\lfloor{r+pq\over qr}\right\rfloor x^{qr}\cdot (r+pq-qr)x^{pq-qr-1} \right)\\
&\leq [x^{pq+r-1}]\left(-2x^r\cdot pqx^{pq-1}-x^{pq}\cdot rx^{r-1}+(pq+r)x^{pq+r-1}\right.\\
& \left. + x^{pr}\cdot (r+pq-pr)x^{r+pq-pr-1} + x^{qr}\cdot (r+pq-qr)x^{pq-qr-1} \right)\\
&=pq+2r-pr-qr
\end{align*}

which is negative. 

If $r$ is not the largest prime then we have 
{\small
\begin{align*}
[x^{pq+r-1}] A_{2001}&=[x^{pq+r-1}]\left(-2x^r\cdot pqx^{pq-1}-x^{pq}\cdot rx^{r-1}+(pq+r)x^{pq+r-1}+x^{2r}\cdot (pq-r)x^{pq-r-1}\right.\\
&  -2\sum_{i=1}^{\left\lfloor {pq-1\over pr}\right\rfloor}x^rx^{ipr}\cdot (pq-ipr)x^{pq-ipr-1} -2\sum_{i=1}^{\left\lfloor {pq-1\over qr}\right\rfloor}x^rx^{iqr}\cdot (pq-iqr)x^{pq-iqr-1} \\
&  +\sum_{i=1}^{\left\lfloor {pq+r-1\over pr}\right\rfloor}x^{ipr}\cdot (r+pq-ipr)x^{r+pq-ipr-1} +\sum_{i=1}^{\left\lfloor {pq+r-1\over qr}\right\rfloor}x^{iqr}\cdot (r+pq-iqr)x^{r+pq-iqr-1} \\
&  +\sum_{i=1}^{\left\lfloor {pq-r-1\over pr}\right\rfloor}x^{2r}x^{ipr}\cdot (pq-ipr-r)x^{pq-ipr-r-1}\\
&\left.+\sum_{i=1}^{\left\lfloor {pq-r-1\over qr}\right\rfloor}x^{2r}x^{iqr}\cdot (pq-r-iqr)x^{pq-iqr-r-1} \right)\\
&=- r  -2\sum_{i=1}^{\left\lfloor {pq-1\over pr}\right\rfloor}(pq-ipr)
-2\sum_{i=1}^{\left\lfloor {pq-1\over qr}\right\rfloor} (pq-iqr) +\sum_{i=1}^{\left\lfloor {pq+r-1\over pr}\right\rfloor}(r+pq-ipr) \\
&+\sum_{i=1}^{\left\lfloor {pq+r-1\over qr}\right\rfloor} (r+pq-iqr) +\sum_{i=1}^{\left\lfloor {pq-r-1\over pr}\right\rfloor} (pq-ipr-r)+\sum_{i=1}^{\left\lfloor {pq-r-1\over qr}\right\rfloor}(pq-r-iqr).
\end{align*}
}
If one groups the sums with $pr$ together, one can see that following Case 2 of Lemma \ref{lem:pqr_0122} that these sums are negative by replacing $q$ with $pq$ and $p$ with $r$. Consequently, grouping the sums with $qr$ together also yields a negative value, and thus we've demonstrated the desired coefficient is negative.  
\end{proof}

\begin{lemma}
The polynomial corresponding to exponent vector which we denote $A_{1112}$, has negative coefficients. 
\end{lemma}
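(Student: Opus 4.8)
The plan is to first reduce $A_{1112}$ --- the polynomial $x\,\phi_p\phi_q\phi_r\,\phi_{pq}\phi_{pr}\phi_{qr}\,\phi_{pqr}^{2}$ --- by the same cancellation of the rational-function expressions from Section \ref{sec:cyc} used in the earlier lemmas. Bookkeeping the exponents of $(1-x)$, $(1-x^p)$, $(1-x^{pq})$, etc., gives (up to the harmless factor $x$)
\[A_{1112}=(1-x^p)(1-x^q)(1-x^r)(1-x^{pqr})^2\left(\sum (i+1)x^i\right)\left(\sum x^{ipq}\right)\left(\sum x^{ipr}\right)\left(\sum x^{iqr}\right).\]
Since $c_p=c_q=c_r=1$ and $c_{pq}=c_{pr}=c_{qr}=1$, the polynomial $A_{1112}$ is invariant under every permutation of $p,q,r$; this symmetry lets me assume $p<q<r$ and get away with a single coefficient estimate rather than the case analysis on orderings needed in the previous lemmas.

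The key move is to regroup $A_{1112}=g(x)\,(1-x^r)\,h(x)$, where $g(x)=\frac{(1-x^p)(1-x^q)}{(1-x)^2(1-x^{pq})}=\frac{\phi_p\phi_q}{1-x^{pq}}$ and $h(x)=\frac{(1-x^{pqr})^2}{(1-x^{pr})(1-x^{qr})}$. Since $\deg(\phi_p\phi_q)=p+q-2<pq$, the series $g$ reproduces the coefficients of $\phi_p\phi_q$ periodically with period $pq$; writing $g_n:=[x^n]g$, this means $g_n>0$ exactly when $n\bmod pq\in\{0,1,\dots,p+q-2\}$ and $g_n=0$ otherwise. Also $h(x)$ has constant term $1$ and all of its other terms have degree at least $pr$ (as $pr<qr<pqr$). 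Hence for every $j<pr$ we get $[x^j]A_{1112}=[x^j]\big(g(x)(1-x^r)\big)=g_j-g_{j-r}$.

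It therefore suffices to find a single $j<pr$ with $g_j=0$ and $g_{j-r}>0$. I would take $j=r+k$ with $k\in\{0,1,\dots,p+q-2\}$, so that $g_{j-r}=g_k>0$ holds automatically; what is needed is some such $k$ with $(r+k)\bmod pq\ge p+q-1$. The residues $r,r+1,\dots,r+p+q-2$ modulo $pq$ form a cyclic interval of length $p+q-1$; were this interval contained in $\{0,1,\dots,p+q-2\}$ (a cyclic interval of the same length), the two would coincide, forcing $pq\mid r$ --- impossible since $r$ is a prime distinct from $p$ and $q$. So a good $k$ exists, and one checks $j=r+k\le r+(p+q-2)<pr$ from $p+q-2<(p-1)r$ (immediate when $p=2$, and from $(p-1)r\ge 2r>2q\ge p+q$ when $p\ge 3$). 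For this $j$, $[x^j]A_{1112}=g_j-g_{j-r}=-g_k<0$.

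The only parts needing real care are the periodicity-and-positivity description of the coefficients of $g$ and the short cyclic-interval counting argument; the factor cancellation and the inequality $p+q-2<(p-1)r$ are routine. As a sanity check, for $p=2,q=3,r=5$ one has $r\bmod pq=5\ge p+q-1=4$, so $k=0$ works and $[x^{5}]A_{1112}=g_5-g_0=0-1=-1$.
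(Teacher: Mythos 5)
Your proof is correct, and it takes a genuinely different route from the paper's. The paper, after the same simplification and the same symmetry reduction to $p<q<r$, studies the coefficient of $x^{r+j}$ directly for a suitably chosen $j<p+q$ and then splits into cases on whether $pq>r$ or $pq<r$, tracking the contributions of the individual series by hand (the second case involves a floor-function analysis of three competing sums). You instead isolate the factor $g(x)=\dfrac{\phi_p\phi_q}{1-x^{pq}}$ and observe that its coefficient sequence is the $pq$-periodic extension of the (strictly positive) coefficients of $\phi_p\phi_q$, while the complementary factor $h(x)=\dfrac{(1-x^{pqr})^2}{(1-x^{pr})(1-x^{qr})}$ is $1$ modulo $x^{pr}$. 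This collapses the target coefficient to $g_j-g_{j-r}$ for all $j<pr$, and a short cyclic-interval pigeonhole argument (two cyclic intervals of length $p+q-1<pq$, one starting at $0$ and the other at $r\bmod pq$, cannot coincide since $pq\nmid r$) produces the negative coefficient in one stroke, with no case split on the relative sizes of $pq$ and $r$. The only inequality you need, $r+(p+q-2)<pr$, is exactly what guarantees $h$ stays out of the way, and your verification of it is fine. The structural observation about $g$'s periodicity is the real gain: it replaces the paper's term-by-term bookkeeping with a clean criterion for when a coefficient vanishes, and the pigeonhole step then guarantees a cancellation must occur. Both arguments depend essentially on the symmetry reduction; yours trades the paper's explicit computation for a more conceptual counting argument.
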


\begin{proof}
After simplification, $$A_{1112}=(1-x^p)(1-x^q)(1-x^r)(1-x^{pqr})^2\left(\displaystyle \sum_{i=0}^\infty {(i+1)x^i}\right)\left(\displaystyle \sum_{i=0}^\infty {x^{ipq}}\right)\left(\displaystyle \sum_{i=0}^\infty {x^{ipr}}\right)\left(\displaystyle \sum_{i=0}^\infty {x^{iqr}}\right).$$
Observe this polynomial is completely symmetric in the prime numbers $p$, $q$, and $r$. Thus, we may assume without loss of generality that $p<q<r$.

We claim there exists a $j< p+q$ so that $[x^{r+j}]A_{1112}<0$. First, note that \[qr>pr=(p-1)r+r> (p-1)(q+1)+r=q(p-1)+p-1+r\geq q+p+r-1.\]
Thus, the sums involving $x^{ipr}$ and $x^{iqr}$ do not contribute to the coefficient of $x^{r+j}$ in $A_{1112}$ when $j<p+q$. However, it is possible the sum involving $x^{ipq}$ is involved. We proceed by cases based on how $pq$ is related to $r$. 

\begin{itemize}
\item[\textbf{Case 1:}] Suppose $pq>r$. Pick $j=\max(p+q-r,0)$. Note this is zero if and only if $p+q\leq r$. Additionally, since $p+q<pq$, we always have $r+j<pq$. Thus,
\begin{align*}
[x^{r+j}]A_{1112}&=[x^{r+j}]\left(-x^p(r+j-p+1)x^{r+j-p}-x^q(r+j-q+1)x^{r+j-q}-x^r(j+1)x^j\right.\\
&\left.+(r+j+1)x^{r+j}+x^px^q(r+j-p-q+1)x^{r+j-p-q}\right)\\
&=-(j+1)
\end{align*}

\item[\textbf{Case 2:}] Now we suppose that $pq<r$. Note necessarily we have $p+q<r$. Observe that so long as $j<p+q$, we have

\begin{align*}
[x^{r+j}]A_{1112}&=[x^{r+j}]\left(-x^p(r+j-p+1)x^{r+j-p}-x^q(r+j-q+1)x^{r+j-q}-x^r(j+1)x^j\right.\\
&+(r+j+1)x^{r+j}+x^px^q(r+j-p-q+1)x^{r+j-p-q}\\
& +\sum_{i=0}^{\left\lfloor {r+j\over pq}\right\rfloor} x^{ipq}(r+j-ipq+1)x^{r+j-ipq}\\
&-\sum_{i=0}^{\left\lfloor {r+j-p\over pq}\right\rfloor} x^px^{ipq}(r+j-p-ipq+1)x^{r+j-p-ipq}\\
&\left.-\sum_{i=0}^{\left\lfloor {r+j-q\over pq}\right\rfloor} x^qx^{ipq}(r+j-q-ipq+1)x^{r+j-q-ipq}\right)\\
&=\left(-(j+1)+ \sum_{i=0}^{\left\lfloor {r+j\over pq}\right\rfloor} (r+j-ipq+1) -\sum_{i=0}^{\left\lfloor {r+j-p\over pq}\right\rfloor} (r+j-p-ipq+1)\right.\\
&\left.-\sum_{i=0}^{\left\lfloor {r+j-q\over pq}\right\rfloor} (r+j-q-ipq+1)\right).
\end{align*}
\end{itemize}

Observe the interval of integers $[r+j-p-q,r+j]$ has length $p+q$. Thus, there exists an integer $j\leq p+q$, chosen minimally, for which 
\[\left\lfloor{r+j-p-q\over pq}\right\rfloor =\left\lfloor{r+j-q\over pq}\right\rfloor =\left\lfloor{r+j-p\over pq}\right\rfloor =\left\lfloor{r+j\over pq}\right\rfloor\]
since $p+q<pq$. Note however that $j\neq p+q$, as this otherwise implies that $r$ is not prime, so it must be that $j<p+q$. In this case, the above reduces to 
\[-(j+1)-\sum_{i=0}^{\left\lfloor {r+j-p-q\over pq}\right\rfloor} (r+j-p-q-ipq+1).\]
The terms of the sum indexed by $i$ are always positive due to the upper bound on $i$. Thus, the above is always negative.
\end{proof}

In light of the prior four Lemmas, we may conclude that the all possible solutions correspond to one of the rows in the following table.
 
\begin{center}
\begin{table}[h]
\begin{tabular}{ |c c c c c c c c c| } 
 \hline
 $c_{pq}$ & $c_{pr}$ & $c_{qr}$ & $c_{pqr}$ & $\leftrightarrow$ & $c_{pq}$ & $c_{pr}$ & $c_{qr}$ & $c_{pqr}$ \\ \hline
 1 & 0 & 0 & 0 & $\leftrightarrow$ & 1 & 2 & 2 & 2\\ 
 1 & 1 & 0 & 0 & $\leftrightarrow$ & 1 & 1 & 2 & 2\\
 
 1& 1 & 0 & 1 & $\leftrightarrow$ & 1 & 1 & 2 & 1\\
 
 
 
 2 & 1 & 0 & 1 & $\leftrightarrow$ & 0 & 1 & 2 & 1\\
 
 1 & 1 & 1 & 1 & $\leftrightarrow$ & 1 & 1 & 1 & 1\\
 \hline
\end{tabular}
\caption{The solutions for two dice in the $pqr$ case. }\label{tab:solutions_pqr}
\end{table}
\end{center}

\begin{theorem}\label{thm:pqr}
Table \ref{tab:solutions_pqr} has the complete set of solutions (up to symmetry) in the case of two dice with sides $pqr$, where $p$, $q$, and $r$ are distinct prime numbers. In particular, there are $25$ solutions, with $13$ possible pairs of dice.
\end{theorem}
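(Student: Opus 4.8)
The plan is to mirror the structure already used for Theorem~\ref{thm:p2q}: start from the complete list of a priori possible exponent vectors in Table~\ref{tab:init_vectors}, successively prune it using the three reduction tools, and verify that exactly the rows of Table~\ref{tab:solutions_pqr} survive. The verification that these surviving rows \emph{are} genuine solutions is the easy direction, so I would dispose of it first: for each of the five rows (together with its partner under $\mathbf{c}\mapsto(2,2,2,2)-\mathbf{c}$, and up to the $S_3$-symmetry permuting $p,q,r$), I exhibit the associated polynomial $x\,\phi_p\phi_q\phi_r\,\phi_{pq}^{c_{pq}}\phi_{pr}^{c_{pr}}\phi_{qr}^{c_{qr}}\phi_{pqr}^{c_{pqr}}$ as a product of cyclotomic identities with manifestly non-negative coefficients. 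Concretely, $(1,1,1,1)$ is the standard solution; $(1,1,0,1)$ reduces via Lemma~\ref{thm:josh_pqr} (in the form $\phi_p\phi_{pq}\phi_{pr}\phi_{pqr}=\phi_p(x^{qr})$) after peeling off $\phi_q\phi_r$; $(2,1,0,1)$ likewise reduces to $\phi_p(x^{qr})\cdot\phi_{pq}\cdot(\text{leftover of positive type})$; $(1,1,0,0)$ and $(1,0,0,0)$ are handled by repeated use of \eqref{eq:cyc_gen}, exactly as in Corollary~\ref{cor:josh_reduction}. Their partners $(1,1,2,1)$, $(0,1,2,1)$, $(1,1,2,2)$, $(1,2,2,2)$ are automatically solutions once the factorization $F = P\cdot Q$ with both $P,Q$ of positive type is displayed.

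For the hard direction --- that nothing else survives --- I would proceed in exactly the three pruning passes already laid out in the text. First, Corollary~\ref{cor:oneminusx}: any exponent vector with $c_{pq}+c_{pr}+c_{qr}-c_{pqr}-3>0$ carries a factor of $1-x$ on the numerator, hence by Lemma~\ref{lem:shriram} has a negative linear coefficient; this cuts Table~\ref{tab:init_vectors} down to Table~\ref{tab:sec_vectors}. Second, Corollary~\ref{cor:josh_reduction}: any vector that, after subtracting some subset of $\{(1,1,0,1),(1,0,1,1),(0,1,1,1)\}$ and possibly one of $\{(1,0,0,0),(0,1,0,0),(0,0,1,0)\}$, lands at the zero vector, factors through $\phi_p(x^{qr})$-type and \eqref{eq:cyc_gen}-type pieces and is therefore of positive type --- and so is its partner. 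Removing these leaves Table~\ref{tab:third_vectors}, i.e.\ the four rows $(2,0,0,0)\leftrightarrow(0,2,2,2)$, $(2,1,0,0)\leftrightarrow(0,1,2,2)$, $(2,0,0,1)\leftrightarrow(0,2,2,1)$, $(1,1,1,0)\leftrightarrow(1,1,1,2)$. Third, the four Lemmas proved above ($A_{0222}$, $A_{0122}$, $A_{2001}$, $A_{1112}$ each have a negative coefficient) kill one side of each of these four rows; since a row is a solution only if \emph{both} sides are of positive type, all four rows are eliminated. What remains is precisely Table~\ref{tab:solutions_pqr}.

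Finally I would do the bookkeeping for the count. Table~\ref{tab:solutions_pqr} lists $5$ rows up to the $S_3$-symmetry on $\{p,q,r\}$. Four of them --- $(1,0,0,0)$, $(1,1,0,0)$, $(1,1,0,1)$, $(2,1,0,1)$ --- have trivial stabilizer only up to the symmetry already folded in: one must expand each back into its full orbit of ordered exponent vectors, count distinct solution \emph{polynomials}, and then the standard row $(1,1,1,1)$ contributes a single (self-paired) solution. Carrying this out gives $25$ solution polynomials and, pairing each $P$ with its complement $Q=F/P$, exactly $13$ unordered pairs (the standard pair being the one with $P=Q$). I should be careful here that the $S_3$-action on a row like $(1,1,0,1)=(c_{pq},c_{pr},c_{qr},c_{pqr})$ permutes the first three coordinates while fixing the last, so its orbit size must be computed from the multiset $\{c_{pq},c_{pr},c_{qr}\}$; tallying all orbits and adding the pairings is routine arithmetic.

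\textbf{Main obstacle.} The genuinely hard work is entirely inside the four Lemmas establishing negativity of $A_{0222}$, $A_{0122}$, $A_{2001}$, $A_{1112}$ --- these require, in each case, a clever choice of which coefficient $[x^N]$ to examine and a delicate floor-function estimate that must be pushed through several sub-cases on the relative sizes of $p,q,r$. Everything else (the two Corollary-based pruning passes, the positive-type factorizations of the survivors, and the final orbit count) is mechanical given the identities in Section~\ref{sec:cyc} and Lemma~\ref{thm:josh_pqr}.
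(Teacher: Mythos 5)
Your proposal is correct and mirrors the paper's own line of reasoning almost exactly: the a priori table of exponent vectors, the $1-x$ pruning via Corollary~\ref{cor:oneminusx} and Lemma~\ref{lem:shriram}, the positive-type reductions via Corollary~\ref{cor:josh_reduction} and \eqref{eq:cyc_gen}, elimination of the four remaining rows through the negativity lemmas, and finally the orbit count under the $S_3$-action to arrive at $25$ solutions and $13$ pairs. The only cosmetic difference is that you spell out more explicit positive-type factorizations of the surviving rows, whereas the paper simply cites the identities; the counting bookkeeping you describe is exactly what the paper carries out in its one-line proof of the theorem.
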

\begin{proof}
The first four rows in the corresponding table each give three unique pairs of solutions by considering all rearrangements of the values for $c_{pq}$, $c_{pr}$, $c_{qr}$.
\end{proof}

\begin{example}
The smallest example of the case in this section corresponds to $30=2\cdot 3\cdot 5$.  As these dice have many labels, instead of writing a number multiple times in a list, we will use the notation $n^{(k)}$ to notate $n,n,...,n$ (where $n$ appears $k$ times). The following is a complete list of labelings for this case.

The following come from the first row of Table \ref{tab:solutions_pqr}.
{\small
\begin{itemize}
\item $1 ,2 ^{( 2 )},3 ^{( 3 )},4 ^{( 4 )},5 ^{( 5 )},6 ^{( 5 )},7 ^{( 4 )},8 ^{( 3 )},9 ^{( 2 )},10$ and \\ $1 ,6 ,7 ,11 ,12 ,13 ,16 ,17 ,18 ,19 ,21 ,22 ,23 ,24 ,25 ,26 ,27 ,28 ,29 ,30 ,32 ,33 ,34 ,35 ,38 ,39 ,40 ,44 ,45 ,50 $
\item $1 ,2 ^{( 2 )},3 ^{( 3 )},4 ^{( 3 )},5 ^{( 3 )},6 ^{( 3 )},7 ^{( 3 )},8 ^{( 3 )},9 ^{( 3 )},10 ^{( 3 )},11 ^{( 2 )},12$ and \\
$1 ,4 ,7 ,10 ,11 ,13 ,14 ,16 ,17 ,19 ,20 ,21 ,22 ,23 ,24 ,25 ,26 ,27 ,28 ,29 ,30 ,32 ,33 ,35 ,36 ,38 ,39 ,42 ,45 ,48$
\item $1 ,2 ^{( 2 )},3 ^{( 2 )},4 ^{( 2 )},5 ^{( 2 )},6 ^{( 2 )},7 ^{( 2 )},8 ^{( 2 )},9 ^{( 2 )},10 ^{( 2 )},11 ^{( 2 )},12 ^{( 2 )},13 ^{( 2 )},14 ^{( 2 )},15 ^{( 2 )},16 $ and \\
$1 ,3 ,5 ,7 ,9 ,11 ,13 ,15 ,16 ,17 ,18 ,19 ,20 ,21 ,22 ,23 ,24 ,25 ,26 ,27 ,28 ,29 ,30 ,32 ,34 ,36 ,38 ,40 ,42 ,44 $

\end{itemize}}

The following come from the second row of of Table \ref{tab:solutions_pqr}.
{\small
\begin{itemize}

\item $1 ,2 ,3 ^{( 2 )},4 ^{( 2 )},5 ^{( 3 )},6 ^{( 3 )},7 ^{( 3 )},8 ^{( 3 )},9 ^{( 3 )},10 ^{( 3 )},11 ^{( 2 )},12 ^{( 2 )},13 ,14$ and \\ $1 ,2 ,7 ,8 ,11 ,12 ,13 ,14 ,17 ,18 ,19 ,20 ,21 ,22 ,23 ,24 ,25 ,26 ,27 ,28 ,29 ,30 ,33 ,34 ,35 ,36 ,39 ,40 ,45 ,46$

\item $1 ,2 ,3 ,4 ^{( 2 )},5 ^{( 2 )},6 ^{( 2 )},7 ^{( 2 )},8 ^{( 2 )},9 ^{( 2 )},10 ^{( 2 )},11 ^{( 2 )},12 ^{( 2 )},13 ^{( 2 )},14 ^{( 2 )},15 ^{( 2 )},16 ,17 ,18$ and \\
$1 ,2 ,3 ,7 ,8 ,9 ,13 ,14 ,15 ,16 ,17 ,18 ,19 ,20 ,21 ,22 ,23 ,24 ,25 ,26 ,27 ,28 ,29 ,30 ,34 ,35 ,36 ,40 ,41 ,42 $

\item $1 ,2 ,3 ,4 ,5 ,6 ^{( 2 )},7 ^{( 2 )},8 ^{( 2 )},9 ^{( 2 )},10 ^{( 2 )},11 ^{( 2 )},12 ^{( 2 )},13 ^{( 2 )},14 ^{( 2 )},15 ^{( 2 )},16 ,17 ,18 ,19 ,20 $ and\\ $1 ,2 ,3 ,4 ,5 ,11 ,12 ,13 ,14 ,15 ,16 ,17 ,18 ,19 ,20 ,21 ,22 ,23 ,24 ,25 ,26 ,27 ,28 ,29 ,30 ,36 ,37 ,38 ,39 ,40$
\end{itemize}}

The following come from the third row of Table \ref{tab:solutions_pqr}.
{\small
\begin{itemize}

\item $1 ,2 ^{( 2 )},3 ^{( 3 )},4 ^{( 3 )},5 ^{( 3 )},6 ^{( 2 )},7 ,16 ,17 ^{( 2 )},18 ^{( 3 )},19 ^{( 3 )},20 ^{( 3 )},21 ^{( 2 )},22$ and\\ $1 ,4 ,6 ,7 ,9 ,10 ,11 ,12 ,13 ,14 ,15 ,16 ,17 ,18 ,19 ,20 ,21 ,22 ,23 ,24 ,25 ,26 ,27 ,28 ,29 ,30 ,32 ,33 ,35 ,38 $
\item $1 ,2 ^{( 2 )},3 ^{( 2 )},4 ^{( 2 )},5 ^{( 2 )},6 ,11 ,12 ^{( 2 )},13 ^{( 2 )},14 ^{( 2 )},15 ^{( 2 )},16 ,21 ,22 ^{( 2 )},23 ^{( 2 )},24 ^{( 2 )},25 ^{( 2 )},26$ and \\ 
$
1 ,3 ,5 ,6 ,7 ,8 ,9 ,10 ,11 ,12 ,13 ,14 ,15 ,16 ,17 ,18 ,19 ,20 ,21 ,22 ,23 ,24 ,25 ,26 ,27 ,28 ,29 ,30 ,32 ,34 $

\item $1 ,2 ^{( 2 )},3 ^{( 2 )},4 ,7 ,8 ^{( 2 )},9 ^{( 2 )},10 ,13 ,14 ^{( 2 )},15 ^{( 2 )},16 ,19 ,20 ^{( 2 )},21 ^{( 2 )},22 ,25 ,26 ^{( 2 )},27 ^{( 2 )},28$ and \\
$1 ,3 ,4 ,5 ,6 ,7 ,8 ,9 ,10 ,11 ,12 ,13 ,14 ,15 ,16 ,17 ,18 ,19 ,20 ,21 ,22 ,23 ,24 ,25 ,26 ,27 ,28 ,29 ,30 ,32 $
\end{itemize}}

The following come from the fourth row of Table \ref{tab:solutions_pqr}.
{\small
\begin{itemize}
\item $1 ,2 ,3 ^{( 2 )},4 ^{( 2 )},5 ^{( 3 )},6 ^{( 2 )},7 ^{( 2 )},8 ,9 ,16 ,17 ,18 ^{( 2 )},19 ^{( 2 )},20 ^{( 3 )},21 ^{( 2 )},22 ^{( 2 )},23 ,24$ and\\ $1 ,2 ,6 ,7 ^{( 2 )},8 ,11 ,12 ^{( 2 )},13 ^{( 2 )},14 ,17 ,18 ^{( 2 )},19 ^{( 2 )},20 ,23 ,24 ^{( 2 )},25 ^{( 2 )},26 ,29 ,30 ^{( 2 )},31 ,35 ,36$
\item $1 ,2 ,4 ,5 ,7 ,8 ,10 ,11 ^{( 2 )},12 ,13 ,14 ^{( 2 )},15 ,17 ,18 ,20 ,21 ^{( 2 )},22 ,23 ,24 ^{( 2 )},25 ,27 ,28 ,30 ,31 ,33 ,34 $ and\\
$1 ,2 ,3 ^{( 2 )},4 ,5 ^{( 2 )},6 ,7 ^{( 2 )},8 ,9 ^{( 2 )},10 ,11 ,16 ,17 ,18 ^{( 2 )},19 ,20 ^{( 2 )},21 ,22 ^{( 2 )},23 ,24 ^{( 2 )},25 ,26$
\item $1 ,2 ,6 ,7 ^{( 2 )},8 ,11 ,12 ^{( 2 )},13 ^{( 2 )},14 ,17 ,18 ^{( 2 )},19 ^{( 2 )},20 ,23 ,24 ^{( 2 )},25 ^{( 2 )},26 ,29 ,30 ^{( 2 )},31 ,35 ,36 $ and\\  $1 ,2 ,3 ^{( 2 )},4 ^{( 2 )},5 ^{( 3 )},6 ^{( 2 )},7 ^{( 2 )},8 ,9 ,16 ,17 ,18 ^{( 2 )},19 ^{( 2 )},20 ^{( 3 )},21 ^{( 2 )},22 ^{( 2 )},23 ,24$\\
\end{itemize}}

\end{example}

\section{Solutions with Different Size}\label{sec:new_sizes}

In this section we explore Question \ref{qu:dif_size}, which investigates if it is possible to find solutions which do not have the same size as the original two standard $m$-sided dice. We have the following. 

\begin{theorem}
Let $m=ab$, where $a$ and $b$ are positive integers. Then in the case where we have $2$ dice with size $m$, we have solutions with sizes $a$ and $ab^2$ whose generating functions are given by 
\[{x(x^a-1)\over x-1}\]
and \[{x(x^{m}-1)^2\over (x^a-1)(x-1)}\]
respectively.
\end{theorem}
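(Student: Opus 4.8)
The plan is to verify directly that the product of the two proposed generating functions equals the frequency polynomial for two standard $m$-sided dice, and then to check that each proposed generating function has non-negative integer coefficients and evaluates to $m$ at $x=1$. The first of these is a purely formal computation: the frequency polynomial is $\left(\dfrac{x(x^m-1)}{x-1}\right)^2$, and
\[
\frac{x(x^a-1)}{x-1}\cdot\frac{x(x^{m}-1)^2}{(x^a-1)(x-1)}
=\frac{x^2(x^m-1)^2}{(x-1)^2},
\]
where the factor $x^a-1$ cancels. So the product condition is immediate.

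Next I would check $P(1)=m$ for each factor. For the first, $\dfrac{x(x^a-1)}{x-1}\Big|_{x=1}=1\cdot\phi_a$-type evaluation; more simply, $\dfrac{x^a-1}{x-1}=1+x+\cdots+x^{a-1}$ evaluates to $a$, and the leading $x$ contributes $1$, giving $a$. For the second factor, rewrite it as
\[
\frac{x(x^m-1)^2}{(x^a-1)(x-1)}
=x\cdot\frac{x^m-1}{x-1}\cdot\frac{x^m-1}{x^a-1}
=x\cdot(1+x+\cdots+x^{m-1})\cdot(1+x^a+x^{2a}+\cdots+x^{(b-1)a}),
\]
using $m=ab$ so that $\dfrac{x^m-1}{x^a-1}=\sum_{i=0}^{b-1}x^{ia}$. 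Evaluating at $x=1$ gives $1\cdot m\cdot b=ab^2$, matching the claimed size.

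The main point, and the one requiring the most care, is non-negativity of the coefficients. For the first factor this is trivial since $x(1+x+\cdots+x^{a-1})$ has all coefficients in $\{0,1\}$. For the second factor, the displayed product expression
\[
x\cdot\Big(\sum_{j=0}^{m-1}x^{j}\Big)\cdot\Big(\sum_{i=0}^{b-1}x^{ia}\Big)
\]
is manifestly a product of polynomials with non-negative coefficients, hence has non-negative (indeed non-negative integer) coefficients. So there is really no obstacle here once the product is written in this factored form rather than as a rational function; the key observation is just that $m=ab$ makes $(x^m-1)/(x^a-1)$ a genuine polynomial with $0/1$ coefficients.

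Finally, I would note that since the product of the two generating functions is exactly the frequency polynomial of two standard $m$-sided dice and each factor corresponds to an honest dice (non-negative integer coefficients, correct side count), the bijection between dice collections and such factorizations described in Section~\ref{sec:cyc} shows these two dice have the same frequencies of sums as two standard $m$-sided dice. The only subtlety worth remarking on is the degenerate cases $a=1$ or $b=1$, where one recovers the original standard dice or a trivial reshuffling; these are consistent with the statement and need no separate argument. The Corollary~\ref{cor:carlye} that follows is then obtained by expanding $x\cdot\big(\sum_{j=0}^{m-1}x^j\big)\cdot\big(\sum_{i=0}^{b-1}x^{ia}\big)$ and collecting coefficients, which produces exactly the stated multiplicities via counting lattice points, but that expansion is routine and separate from the theorem itself.
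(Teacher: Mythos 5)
Your proof is correct and follows essentially the same route as the paper: verify the product formula cancels to the frequency polynomial, then rewrite the second factor as $x\cdot\frac{x^m-1}{x-1}\cdot\frac{x^m-1}{x^a-1}$ so that non-negativity of coefficients becomes visible (the paper groups the terms slightly differently as $\bigl(\sum_{i=1}^a x^i\bigr)\bigl(\sum_{i=0}^{b-1}x^{ia}\bigr)^2$, but these are the same polynomial). The only slip is your line ``check $P(1)=m$ for each factor'' --- the factors must evaluate to $a$ and $ab^2$ respectively, not $m$ --- but you then compute exactly those values, so the argument stands.
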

\begin{proof}
Taking the product of these functions, we get
$$ \left(\frac {x(x^{a}-1)}{x-1}\right)\left(\frac {x(x^{m}-1)^2}{(x^a -1)(x-1)}\right) = \left(\frac {x(x^{m}-1)}{(x-1)}\right)^2$$
which is the frequency polynomial for two $m$ sided dice. Thus, we need only demonstrate that \[\left(\frac {x(x^{m}-1)^2}{(x^a -1)(x-1)}\right)\]
has non-negative coefficients. Notice that

\begin{align*}
\frac{x(x^{m}-1)^2}{(x^a -1)(x-1)}
&=\left({x(x^a-1)\over x-1}\right)\left({x^m-1 \over x^a-1}\right)^2\\
&=\left(\sum_{i=1}^a x^i\right)\left(\sum_{i=0}^{b-1} x^{ia}\right)^2
\end{align*}
which has positive coefficients. Observe evaluating this polynomial at $x=1$ gives $ab^2$ as desired. 
\end{proof}

Further computations allows us to explicitly describe the labels on the dice.
\begin{corollary}\label{cor:carlye}
Let $m=ab$ with $a$ and $b$ non-negative integers. 
Consider a dice size $ab^2$ who labels come from $\{1,2,\dots, 2m-a\}$ in the following way:
\begin{enumerate}
\item the numbers $(i-1)a+1,(i-1)a+2,\dots, ia, 2m-(i+1)a+1, 2m-(i+1)a+2,\dots, 2m-ai$ each appear $i$ times on the dice for $1\leq i\leq b-1$; and 
\item the numbers $m-a+1,m-a+2,\dots, m$ appear $b$ times on the dice.
\end{enumerate}

This dice, along with a standard $a$-sided dice, has the same frequencies of sums as two standard $m$-sided dice.
\end{corollary}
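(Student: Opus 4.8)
The plan is to start from the generating function identity established in the preceding theorem, namely that the size-$ab^2$ die has generating function
\[
\left(\sum_{i=1}^a x^i\right)\left(\sum_{i=0}^{b-1} x^{ia}\right)^2,
\]
and simply expand this product to read off the coefficients. The Corollary is purely a bookkeeping restatement of the Theorem, so no new conceptual input is needed; the work is entirely in organizing the convolution.

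First I would compute $\left(\sum_{i=0}^{b-1} x^{ia}\right)^2 = \sum_{k=0}^{2b-2} c_k x^{ka}$, where $c_k$ is the number of ways to write $k$ as an ordered sum of two elements of $\{0,1,\dots,b-1\}$; this is the familiar ``triangular'' sequence $c_k = k+1$ for $0\le k\le b-1$ and $c_k = 2b-1-k$ for $b-1\le k\le 2b-2$ (symmetric, peaking at $c_{b-1}=b$). Next I would multiply by $\sum_{i=1}^a x^i = x+x^2+\dots+x^a$, which takes each block of $c_k$ copies of $x^{ka}$ and spreads it out over the $a$ consecutive exponents $ka+1, ka+2, \dots, ka+a$. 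So the coefficient of $x^\ell$ in the full product is $c_k$ whenever $\ell \in \{ka+1,\dots,(k+1)a\}$ for some $0\le k \le 2b-2$, and $0$ otherwise. The last exponent that appears is $(2b-1)a = 2ab - a = 2m-a$, matching the claimed label set $\{1,\dots,2m-a\}$.

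It then remains to translate the index $k$ and the coefficient $c_k$ into the language of the statement. For $k = i-1$ with $1\le i\le b-1$, the block $\{(i-1)a+1,\dots,ia\}$ gets coefficient $c_{i-1} = i$; by symmetry $c_{2b-2-(i-1)} = c_{2b-1-i}$ also equals $i$, and the corresponding block of exponents is $\{(2b-1-i)a+1,\dots,(2b-i)a\} = \{2m - (i+1)a + 1, \dots, 2m - ai\}$ (using $2m = 2ab$), which is exactly item (1). The middle index $k = b-1$ gives coefficient $c_{b-1} = b$ on the block $\{(b-1)a+1,\dots,ba\} = \{m-a+1,\dots,m\}$, which is item (2). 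Finally I would note the total count $\sum_\ell [x^\ell] = a\sum_{k=0}^{2b-2} c_k = a\cdot b^2 = ab^2$ as a sanity check, and invoke the Theorem for the frequency-matching claim.

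There is essentially no obstacle here beyond careful indexing — the one place to be attentive is the symmetry bookkeeping, i.e. verifying that the ``upper'' block of exponents with coefficient $i$ is $2m-(i+1)a+1$ through $2m-ai$ rather than being off by $a$; this is just the computation $(2b-1-i)a + 1 = 2ab - ia - a + 1 = 2m - (i+1)a + 1$, so I would present that line explicitly. I would also remark (connecting to the paper's promised discussion of triangular numbers) that the multiset of coefficients is $1,1,2,2,\dots,(b-1),(b-1),b$, whose sum $b^2$ is visibly $2\binom{b}{2} + b = 2\cdot\frac{(b-1)b}{2} + b$, reflecting the triangular-number structure of $c_k$.
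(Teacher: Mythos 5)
Your proposal is correct and takes essentially the same approach as the paper: both expand the generating function identity $\left(\sum_{i=1}^a x^i\right)\left(\sum_{i=0}^{b-1} x^{ia}\right)^2$ from the preceding theorem and read off the triangular coefficient pattern $1,2,\dots,b,\dots,2,1$ coming from the square. The paper organizes the convolution by summing the shifted copies $c_j(x)=x^j\left(\sum_{i=0}^{b-1} x^{ia}\right)^2$ over $j=1,\dots,a$, whereas you compute the square first and then distribute it over the $a$ shifts; this is the same bookkeeping carried out in the opposite order, and your index translation to the blocks in the statement is accurate.
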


\begin{proof}
This follows from the prior Theorem provided the polynomial corresponding to \[
\frac{x(x^{m}-1)^2}{(x^a -1)(x-1)}\]
is the generating function for the dice of size $ab^2$. Continuing from the prior Theorem's proof, we have

\begin{align*}
\frac{x(x^{m}-1)^2}{(x^a -1)(x-1)}
&=\left(\sum_{i=1}^a x^i\right)\left(\sum_{i=0}^{b-1} x^{ia}\right)^2\\
&=x\left(\sum_{i=0}^{b-1} x^{ia}\right)^2+x^2\left(\sum_{i=0}^{b-1} x^{ia}\right)^2+\cdots +x^a\left(\sum_{i=0}^{b-1} x^{ia}\right)^2\\
\end{align*}
Let us denote $\displaystyle c_j(x):=x^j\left(\sum_{i=0}^{b-1} x^{ia}\right)^2$, and so the above is $\displaystyle  \sum_{j=1}^a c_j$. Observe that 
\[c_j(x)=x^j+2x^{j+a}+3x^{j+2a}+\cdots +bx^{j+m-a}+(b-1)x^{j+m}+\cdots +x^{j+2m-2a}.\]
Thus
\begin{align*}
\displaystyle  \sum_{j=1}^a c_j&=\sum_{i=1}^b\sum_{j=1}^a ix^{j+ai-a}+\sum_{i=1}^{b-1}\sum_{j=1}^a (b-i)x^{j+m+ia},
\end{align*}
which is the generating function for the aforementioned dice.
\end{proof}

\begin{remark}
At the end of \cite{gc}, the origin of Question \ref{qu:dif_size}, they mentioned as an example that one can have a four-sided dice with labels $1,1,4,4$ and a nine-sided dice labeled $1,2,2,3,3,3,4,4,4,5,5,5,6,6,6,7,7,8$. We point out that our construction could not generate this solution, since neither $4$ nor $9$ are divisors of $6$. Thus, there is much room for further exploration here.
\end{remark}

\begin{example}
We use the fact that $6=2\times 3$ for an immediate application of the prior result.
If we let $a=3$, and $b=2$, the prior results gives us two dice with sizes $3$ and $12$. These dice have the labels $1,2,3$ and $1,2,3,4,4,5,5,6,6,7,8,9$.

One can use the following frequency table to verify the sums of dice with the above sides yield the same sums of two standard $6$ sided dice.

\begin{center}

\begin{tabular}{l|llllllllllll}
  & $1$ & $2$ & $3$ & $4$ & $4$ & $5$ & $5$ & $6 $ & $6$ & $7$  & $8$  & $9$  \\ \hline
$1$ & $2$ & $3$ & $4$ & $5$ & $5$ & $6$ & $6$ & $7$ & $7$ & $8 $ &$ 9$  & $10$ \\
$2$ & $3$ & $4$ & $5$ & $6$ & $6$ & $7$ & $7$ & $8$ & $8$ & $9$  & $10$ & $11$ \\
$3$ & $4$ & $5$ & $6$ & $7$ & $7$ & $8$ & $8$ & $9$ & $9$ & $10$ & $11 $& $12$
\end{tabular}
\end{center}
\end{example}

This prior result surprisingly provides an alternative version of a known combinatorial identity. First, recall  $T_n$ is the number of boxes in a triangular array with $n$ rows and $i$ boxes in row $i$. See figure \ref{fig:triangles}.

\begin{figure}[h]
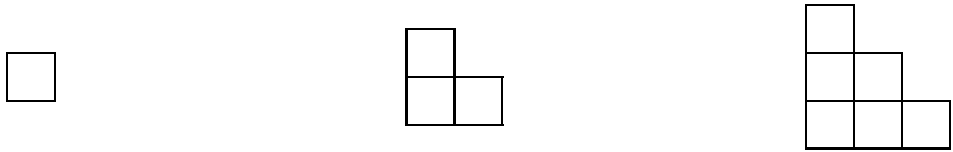

\begin{center}
\begin{minipage}{.32\textwidth}
\begin{center}

\begin{ytableau}
       {} &\none & \none &  \none \\
  \end{ytableau}
\end{center}
\end{minipage}
\begin{minipage}{.32\textwidth}

\begin{center}
\begin{ytableau}
       {} &\none & \none &  \none \\
  {} & {}& \none & \none  \\
  \end{ytableau}
\end{center}
\end{minipage}
\begin{minipage}{.32\textwidth}

\begin{center}
\begin{ytableau}
       {} &\none & \none &  \none \\
  {} & {}& \none & \none  \\
  {} & {}& {} & \none \\
  \end{ytableau}
\end{center}
\end{minipage}
\end{center}
\caption{The triangular arrays with 1, 2, and 3 rows. Thus, $T_1=1$, $T_2=3$, and $T_3=6$.}
\label{fig:triangles}
\end{figure}

The following is an established combinatorial identity, but we shall reprove it here.
\begin{proposition}
If $n\in \N$ then \[n^2=T_n+T_{n-1}.\]
\end{proposition}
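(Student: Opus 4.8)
The plan is to give a generating-function / combinatorial proof that mirrors the spirit of the paper, using the very identity already established in Corollary \ref{cor:carlye} (or the Theorem preceding it). Observe that in the case $m=ab$ with $a=b=n$, the dice of size $ab^2=n^3$ has generating function
\[
\frac{x(x^{n^2}-1)^2}{(x^n-1)(x-1)}=\left(\sum_{i=1}^{n}x^i\right)\left(\sum_{i=0}^{n-1}x^{in}\right)^2,
\]
and evaluating at $x=1$ gives $n\cdot n^2=n^3$ as a sanity check — but that only recovers $n^3$, not $n^2$. So instead the cleaner route is to look directly at the inner square $\left(\sum_{i=0}^{n-1}x^{in}\right)^2$ or, better, to strip off the $a$-fold outer factor and read coefficients.

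Concretely, I would argue as follows. Set $a=b=n$ in Corollary \ref{cor:carlye}. The dice of size $n^3$ has labels described by the two families in the corollary; summing the multiplicities must give the total number of faces, $n^3$. The multiplicity-$i$ labels (for $1\le i\le n-1$) come in two blocks of $n$ numbers each, contributing $2ni$ faces, and the multiplicity-$n$ block contributes $n\cdot n=n^2$ faces. Hence
\[
n^3=\sum_{i=1}^{n-1}2ni+n^2=2n\cdot\frac{(n-1)n}{2}+n^2=n^2(n-1)+n^2,
\]
which is just $n^3=n^3$ — again a tautology, not the claimed identity. So this counting of the \emph{whole} $n^3$-sided dice is the wrong slice. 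The right object is one of the inner sums: the polynomial $\left(\sum_{i=0}^{n-1}x^{in}\right)^2 = \sum_{i=0}^{n-1}(i+1)x^{in}+\sum_{i=1}^{n-1}(n-i)x^{in+? }$ — more precisely $\big(\sum_{i=0}^{n-1}y^i\big)^2=\sum_{k=0}^{2n-2}(\,\min(k,n-1,2n-2-k)+1\,)y^k$ has coefficient sum $n^2$; splitting that coefficient sum into the "ascending staircase" $1+2+\cdots+n=T_n$ (coefficients of $y^0,\dots,y^{n-1}$) and the "descending staircase" $(n-1)+(n-2)+\cdots+1=T_{n-1}$ (coefficients of $y^n,\dots,y^{2n-2}$) yields exactly $n^2=T_n+T_{n-1}$.

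So the key steps, in order: (1) recall $\big(\sum_{i=0}^{n-1}y^i\big)\big(\sum_{i=0}^{n-1}y^i\big)$ evaluated at $y=1$ is $n\cdot n=n^2$; (2) expand the product combinatorially, noting the coefficient of $y^k$ counts pairs $(s,t)$ with $0\le s,t\le n-1$ and $s+t=k$, so it equals $k+1$ for $0\le k\le n-1$ and $2n-1-k$ for $n\le k\le 2n-2$; (3) sum the first group of coefficients to get $1+2+\cdots+n=T_n$ and the second to get $1+2+\cdots+(n-1)=T_{n-1}$; (4) conclude $n^2=T_n+T_{n-1}$. The main obstacle is essentially presentational rather than mathematical: making clear \emph{which} generating-function identity from Section \ref{sec:new_sizes} is being invoked and phrasing the coefficient bookkeeping cleanly (the split at $k=n-1$ versus $k=n$), since the underlying computation is elementary. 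One could alternatively phrase step (3) as a geometric "cut the $n\times n$ grid along a diagonal" argument, which may read better than tracking antidiagonal coefficients, and I would likely present both the grid picture and the generating-function sentence to tie it back to the paper's methods.
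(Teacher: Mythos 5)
Your final argument is correct, but it takes a genuinely different route from the paper's. The paper proves the identity by writing out $T_n = n + (n-1) + \cdots + 1$ and $T_{n-1} = 1 + 2 + \cdots + (n-1)$ and adding them column by column: each of the $n$ columns sums to $n$, giving $n^2$ directly. You instead work with the polynomial $\bigl(1 + y + \cdots + y^{n-1}\bigr)^2$: evaluating at $y=1$ gives $n^2$, while the coefficient of $y^k$ equals $k+1$ on the ascending side ($0\le k\le n-1$, summing to $T_n$) and $2n-1-k$ on the descending side ($n\le k\le 2n-2$, summing to $T_{n-1}$). Both proofs amount to partitioning $n^2$ lattice pairs, but along different slices: the paper pairs the $j$-th term of $T_n$ with the $(j-1)$-th term of $T_{n-1}$, whereas you slice the $n\times n$ grid along antidiagonals $s+t=k$. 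Your version fits more naturally with the generating-function machinery used throughout the paper, which is a nice touch, though the paper's tabular argument is the more elementary and direct of the two. One presentational note: the first two-thirds of your proposal chases leads you correctly recognize as tautologies (setting $a=b=n$ in Corollary \ref{cor:carlye}); in a final write-up you should cut that exploration and begin with the $\bigl(\sum_{i=0}^{n-1}y^i\bigr)^2$ observation, since the earlier material, while honest, would only confuse a reader.
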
\label{prop:old_triangle}
\begin{proof}
Adding the formulas for $T_n$ and $T_{n-1}$ together, we have

\begin{tabular}{r c c c c c c c c c c c c}
$T_n$&$=$&$n$&$+$&$(n-1)$&$+$&$(n-2)$&$+$&$\cdots$& $+$&$1$\\
$+T_{n-1}$&$=$&&&$1$&$+$&$2$&$+$&$\cdots$& $+$&$(n-1)$\\ \hline
$T_n+T_{n-1}$&$=$&$n$&$+$&$n$&$+$&$n$&$+$&$\cdots$& $+$&$n$\\
&$=$&$n^2$
\end{tabular}

\end{proof}

A popular combinatorial way to interpret this identity is by witnessing that an $n\times n$ grid is made up of the diagrams of $T_n$ and $T_{n-1}$. See Figure \ref{fig:old_tts}

\begin{figure}[h]
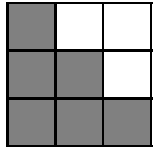

\begin{center}

\begin{ytableau}
  *(gray) & {}& {} & \none \\
  *(gray) & *(gray)& {} & \none \\
  *(gray) & *(gray)&*(gray) & \none \\
  \end{ytableau}
\end{center}
\caption{A visual proof that $T_2+T_3=3^2$}
\label{fig:old_tts}
\end{figure}


We now get the following Corollary to the prior result.
\begin{corollary}
If $a\in \N$ divides $m\in \N$, so that $m=ab$ for some integer $b$, we have 
\[m^2=a^2(T_b+T_{b-1})\]
\end{corollary}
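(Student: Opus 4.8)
The plan is to derive this identity directly from the preceding proposition. Since $m = ab$, we have $m^2 = (ab)^2 = a^2 b^2$, and applying Proposition \ref{prop:old_triangle} with $n = b$ gives $b^2 = T_b + T_{b-1}$, so $m^2 = a^2(T_b + T_{b-1})$. This is essentially a one-line substitution, so the ``proof'' is immediate.

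However, since the whole point of this section is to connect the dice-relabeling construction to the triangular-number identity, I expect the intended proof is not the trivial substitution but rather an independent verification via Corollary \ref{cor:carlye} (or the Theorem preceding it). The idea would be: evaluate the generating function of the size-$ab^2$ dice at $x = 1$ in two ways. On one hand, from the Theorem we know the polynomial $\frac{x(x^m-1)^2}{(x^a-1)(x-1)} = \left(\sum_{i=1}^a x^i\right)\left(\sum_{i=0}^{b-1} x^{ia}\right)^2$ evaluates to $a \cdot b^2 = ab^2$ at $x=1$ (consistent with a dice of size $ab^2$). On the other hand, using the explicit label description from Corollary \ref{cor:carlye}, the number of sides is $\sum_{i=1}^{b-1} i \cdot (2a) + b \cdot a$, since in item (1) each of the $i$ values (for $1 \le i \le b-1$) contributes $a$ labels on each of two ``symmetric'' blocks (so $2a$ labels appearing $i$ times each), and item (2) contributes $a$ labels appearing $b$ times. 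This count equals $2a \sum_{i=1}^{b-1} i + ab = 2a T_{b-1} + ab = a(T_{b-1} + T_{b-1} + b) = a(T_{b-1} + T_b)$, using $T_b = T_{b-1} + b$. Equating the two evaluations gives $ab^2 = a(T_{b-1}+T_b)$, hence $m^2 = (ab)^2 = a^2 b^2 = a \cdot ab^2 = a^2(T_b + T_{b-1})$.

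The main (only) obstacle is bookkeeping the multiplicities in Corollary \ref{cor:carlye} correctly: confirming that the two arithmetic-progression blocks in item (1) are genuinely disjoint (so no label's multiplicity is double-counted) and that they are disjoint from the block in item (2). This disjointness is already implicitly established in the proof of Corollary \ref{cor:carlye}, where the generating function is written as $\sum_{i=1}^b \sum_{j=1}^a i\, x^{j+ai-a} + \sum_{i=1}^{b-1}\sum_{j=1}^a (b-i) x^{j+m+ia}$; evaluating that expression at $x=1$ yields $a\sum_{i=1}^b i + a \sum_{i=1}^{b-1}(b-i) = a T_b + a T_{b-1}$ directly. So I would simply cite that formula, set $x=1$, and conclude $m^2 = a \cdot (ab^2) = a \cdot a(T_b + T_{b-1}) = a^2(T_b + T_{b-1})$.
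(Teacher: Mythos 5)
Your proposal is correct, and your final route coincides with the paper's: the paper also sets $x=1$ in the double-sum expression for $P(x)$ derived in the proof of Corollary \ref{cor:carlye}, obtaining $ab^2 = aT_b + aT_{b-1}$, and then multiplies through by $a$ to get $m^2 = a^2(T_b + T_{b-1})$. You correctly anticipated that the one-line algebraic substitution, while logically valid, is not the intended argument (the paper even remarks afterward that the corollary is ``really just a restatement'' of Proposition \ref{prop:old_triangle}, and that the point is the combinatorial route through the dice construction), and your fallback plan of citing the generating-function identity and evaluating at $x=1$ is exactly what the paper does.
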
\label{cor:new_triangle}

\begin{proof}
Let $\displaystyle P(x):={x(x^{m}-1)^2\over (x^a-1)(x-1)}$. When written as a Taylor series, we see that $P(1)=ab^2={m^2\over a}$. On the other had, using the prior proof to express $P(x)$ as 
\[P(x)=\sum_{i=1}^b\sum_{j=1}^a ix^{j+ai}+\sum_{i=1}^{b-1}\sum_{j=1}^a (b-i)x^{j+m-a+ia},\] note that the coefficients of the first $ab$ terms sum to $aT_b$, while the remaining $a(b-1)$ terms sum to $aT_{b-1}$. Thus, we have demonstrated 
\[{m^2\over a}=aT_{b}+aT_{b-1},\]
and so 
\[m^2=a^2(T_{b}+T_{b-1}).\]
\end{proof}

\begin{remark}
This Corollary really is just a restatement of Proposition \ref{prop:old_triangle}. Indeed, rather than multiplying by $a$ in the last step, one could have divided by $a$ to get
\[b^2=T_b+T_{b-1}\]
which we already know is true. However, the new statement, as written, does has following interesting combinatorial interpretation: Since $m=ab$, one may split up a grid of $m\times m$ squares into $b\times b$ squares, of which there are $a^2$ in total. Each of these $b\times b$ squares has $b^2=T_b+T_{b-1}$ squares. See Figure \ref{fig:new_tts}.
\end{remark}

\begin{figure}[h]
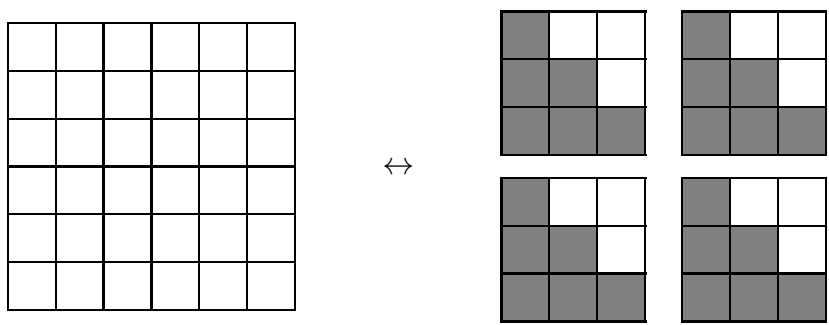

\begin{center}
\begin{minipage}{.3\textwidth}

\begin{ytableau}
  {} & {}& {} & {} & {}& {} & \none \\
  {} & {}& {} & {} & {}& {} & \none \\
  {} & {}& {} & {} & {}& {} & \none \\
  {} & {}& {} & {} & {}& {} & \none \\
  {} & {}& {} & {} & {}& {} & \none \\
  {} & {}& {} & {} & {}& {} & \none \\
  \end{ytableau}
\end{minipage}
$\leftrightarrow$\hspace{.4in}
\begin{minipage}{.1\textwidth}

\begin{ytableau}
  *(gray) & {}& {} & \none \\
  *(gray) & *(gray)& {} & \none \\
  *(gray) & *(gray)&*(gray) & \none \\
  \end{ytableau}\vspace{.1in}

\begin{ytableau}
  *(gray) & {}& {} & \none \\
  *(gray) & *(gray)& {} & \none \\
  *(gray) & *(gray)&*(gray) & \none \\
  \end{ytableau}
\end{minipage} \hspace{.2in}
\begin{minipage}{.1\textwidth}

\begin{ytableau}
  *(gray) & {}& {} & \none \\
  *(gray) & *(gray)& {} & \none \\
  *(gray) & *(gray)&*(gray) & \none \\
  \end{ytableau}\vspace{.1in}

\begin{ytableau}
  *(gray) & {}& {} & \none \\
  *(gray) & *(gray)& {} & \none \\
  *(gray) & *(gray)&*(gray) & \none \\
  \end{ytableau}
\end{minipage}
\end{center}
\caption{A visual proof that $4(T_{2}+T_{3})=6^2$. In this case, $a=2$ and $b=3$.}
\label{fig:new_tts}
\end{figure}

\section{Different solutions for dice of different size }\label{sec:dif_sizes}

In this section, we report on preliminary results of Question \ref{qu:dif_dice_size}.

\begin{proposition}\label{lem:prime}
    Let $p$ and $q$ be prime numbers. There is no way to relabel dice of size $p$ and $q$ without changing the frequencies of their sums. 
\end{proposition}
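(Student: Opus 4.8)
The plan is to translate the statement into the generating-function language of Section~\ref{sec:cyc} and derive a contradiction from a degree/factorization count. Suppose for contradiction that we have a relabeling of a $p$-sided die and a $q$-sided die that preserves the frequencies of the sums. Writing $P(x)$ and $Q(x)$ for the generating functions of the relabeled dice, we would need $P(x)Q(x) = \left(\dfrac{x(x^p-1)}{x-1}\right)\left(\dfrac{x(x^q-1)}{x-1}\right) = x^2\,\phi_p(x)\,\phi_q(x)$, with $P$ and $Q$ having non-negative integer coefficients, $P(1)=p$, $Q(1)=q$, and (since the labels are positive integers) $P(0)=Q(0)=0$, so that $x\mid P$ and $x\mid Q$. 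Writing $P=xP_1$, $Q=xQ_1$ we get $P_1(x)Q_1(x)=\phi_p(x)\phi_q(x)$.

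Now I would use that $\phi_p$ and $\phi_q$ are irreducible over $\Q$ (they are cyclotomic polynomials, and $p,q$ are prime). Hence the only factorizations of $\phi_p\phi_q$ into two polynomials over $\Q$ (up to units) are $\{1,\ \phi_p\phi_q\}$, $\{\phi_p,\ \phi_q\}$, $\{\phi_q,\ \phi_p\}$. The first is ruled out because it would force one of the dice to be the single-sided "die" with generating function $x$, which has $P(1)=1\neq p,q$ (using that $p,q$ are prime, hence $\ge 2$, and also $p\ne q$ would need to be handled if we allowed $p=q$, but here they are assumed distinct primes—actually the statement only says "prime numbers"; I will note that if $p=q$ the two usual dice give a relabeling, so the distinctness hypothesis is essential, and assume $p\neq q$). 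The remaining two options give $\{P_1,Q_1\}=\{\phi_p,\phi_q\}$ in some order; matching $P_1(1)=p$, $Q_1(1)=q$ via \eqref{eq:eval_prime_power} forces $P_1=\phi_p$ and $Q_1=\phi_q$ (the other assignment gives $P_1(1)=q\neq p$). But then $P(x)=x\phi_p(x)=\sum_{i=1}^p x^i$ and $Q(x)=x\phi_q(x)=\sum_{i=1}^q x^i$ are exactly the standard dice, not a genuine relabeling. Hence no relabeling other than the trivial one exists, which is what the statement asserts.

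The only real subtlety—and the step I expect to require the most care—is justifying that the factorization must respect rational irreducibility: a priori $P_1$ and $Q_1$ only have non-negative \emph{real} (indeed integer) coefficients, but since $\Q[x]$ is a UFD and $P_1Q_1\in\Q[x]$ with both factors in $\Z[x]\subseteq\Q[x]$, each of $P_1,Q_1$ is (up to a rational scalar, which is forced to be $1$ by comparing leading coefficients, both monic since $\phi_p,\phi_q$ are monic) a product of a subset of the irreducible factors $\{\phi_p,\phi_q\}$. So the argument goes through cleanly, and the constraint $P(1)=p$, $Q(1)=q$ together with \eqref{eq:eval_prime_power} pins down the assignment uniquely. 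I would present this concisely, emphasizing the evaluation-at-$1$ bookkeeping as the mechanism that eliminates every nontrivial case.
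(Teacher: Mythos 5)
Your proof is correct and takes essentially the same route as the paper's: encode the dice as generating functions, write the frequency polynomial as $x^2\phi_p\phi_q$, and use the evaluation at $x=1$ (via identity \eqref{eq:eval_prime_power}) to pin down the only admissible factorization. The paper simply asserts that $\{x\phi_p,\,x\phi_q\}$ is the only factorization giving solutions, whereas you spell out the underlying UFD/irreducibility step; that extra care is fine but adds no new idea.
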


\begin{proof}
    The frequency polynomial for dice of size $p$ and $q$ is 
\begin{align*}
    F(x) &= \frac{x(x^{p}-1)}{x-1} \cdot \frac{x(x^{q}-1)}{x-1}\\
&= x^2 \cdot \phi_{p} \cdot \phi_{q}.
\end{align*}

  Since $\phi_{p}{(1)} = p$ and $\phi_{q}{(1)} = q$, the only factorization of $F(x)$ that gives solutions is the factorization into polynomials $x\phi_p$ and $x\phi_q$. 
    
\end{proof}

\begin{remark}
We originally conjectured that the prior result can be extended to two numbers which are relatively prime, but thanks to communication with David Rusin, we discovered this is false. The frequency polynomial for a $5$ and $6$ sided dice, for instance, can be factored into $x\phi_5\phi_6$ and $x\phi_2\phi_3$, and notably, both polynomials have positive coefficients.
\end{remark}

%
%
%
%


We have a preliminary result in the case where the numbers are not relatively prime. 
\begin{lemma}\label{lem:prime_power}
 There are $k$ ways to relabel a dice of size $p$ and a dice of size $p^k$, where $p$ is prime.
    
\end{lemma}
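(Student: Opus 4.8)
The plan is to set up the frequency polynomial for dice of sizes $p$ and $p^k$ and count its admissible factorizations directly, mirroring the approach used in Proposition \ref{lem:prime}. The frequency polynomial is
\[
F(x) = \frac{x(x^p-1)}{x-1}\cdot\frac{x(x^{p^k}-1)}{x-1} = x^2\,\phi_p(x)\cdot\prod_{i=1}^{k}\phi_{p^i}(x),
\]
using identity \eqref{eq:prod_cyc} on $x^{p^k}-1$ (whose divisors are exactly $1,p,\dots,p^k$), so that $x^{p^k}-1 = (x-1)\prod_{i=1}^k \phi_{p^i}$. Thus $F = x^2\,\phi_p^2\,\phi_{p^2}\phi_{p^3}\cdots\phi_{p^k}$. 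Now by \eqref{eq:eval_prime_power} every $\phi_{p^i}(1) = p$, so each solution polynomial must evaluate to $p$ at $1$ (it represents a $p$-sided die) — wait, no: one die has size $p$ and the other size $p^k$, so one solution evaluates to $p$ and the other to $p^k$. A solution of size $p$ therefore uses exactly one cyclotomic factor $\phi_{p^j}$ (times $x$), and the size-$p^k$ solution gets the complementary product.

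The key step is then: for which $j\in\{1,\dots,k\}$ does the pair $\bigl(x\,\phi_{p^j}(x),\ x\,\phi_p(x)\prod_{i\neq j}\phi_{p^i}(x)\bigr)$ consist of polynomials with non-negative coefficients? Both factors always have non-negative coefficients here: $\phi_{p^j}(x) = \phi_p(x^{p^{j-1}})$ by \eqref{eq:cyc_prime_power_mult} (or directly $1 + x^{p^{j-1}} + \dots + x^{(p-1)p^{j-1}}$), which clearly has non-negative coefficients; and the complementary product, after absorbing the extra $\phi_p$, telescopes via Lemma \ref{lem:product_piq}: $\phi_p\prod_{i\neq j}\phi_{p^i} = \bigl(\prod_{i=0}^{k}\phi_{p^i}\bigr)/\phi_{p^j}$ — hmm, more carefully, $\phi_{p^0}=\phi_1 = x-1$ is not what we want, so instead I would write $\phi_p\cdot\prod_{i=1}^{j-1}\phi_{p^i} = \phi_p(x^{p^{j-1}})\cdot(\text{lower part})$; the cleanest route is to observe $\prod_{i=1}^{k}\phi_{p^i} = (x^{p^k}-1)/(x-1) = 1 + x + \dots + x^{p^k-1}$ and that dividing this by $\phi_{p^j}$ and multiplying by $\phi_p$ leaves a polynomial, which one checks has non-negative coefficients. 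So all $k$ choices of $j$ yield valid solutions, and these are all of them, giving exactly $k$ relabelings.

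The main obstacle is verifying rigorously that the complementary factor $x\,\phi_p(x)\prod_{i\neq j}\phi_{p^i}(x)$ genuinely has non-negative coefficients for every $j$, rather than just being a polynomial. I expect the slickest argument is to repeatedly apply identity \eqref{eq:cyc_gen}, $\phi_m\phi_{pm} = \phi_m(x^p)$, to collapse adjacent factors $\phi_{p^{i}}\phi_{p^{i+1}}$; starting from the pair $\phi_p\phi_p = \phi_p\cdot\phi_p$ (the doubled factor) one can merge upward through $\phi_{p^2},\dots,\phi_{p^{j-1}}$ to get $\phi_p(x^{p^{j-1}})$, and separately merge $\phi_{p^{j+1}},\dots,\phi_{p^k}$ downward, so the whole complementary product becomes a product of things of the form $\phi_p$ evaluated at powers of $x$ — all manifestly non-negative — possibly up to reindexing. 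I would also double-check the small case $k=1$ (only the standard solution, consistent with Gallian–Rusin's prime-power count) as a sanity check, and note that by definition the $j$ such that the size-$p$ die is standard corresponds to $j=1$, so the $k$ solutions include the standard one.
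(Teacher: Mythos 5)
Your approach is essentially the paper's: factor the frequency polynomial as $F(x)=x^2\phi_p^2\phi_{p^2}\cdots\phi_{p^k}$, observe that $\phi_{p^i}(1)=p$ for every $i\ge 1$, and conclude the $p$-sided die must be $x\phi_{p^j}$ for some $j\in\{1,\dots,k\}$, giving $k$ solutions. The paper's own proof is terser and simply asserts the complementary factor works, so your instinct to flag non-negativity of the $p^k$-sided die's generating function as the point needing justification is a genuine improvement. One slip in your sketch: identity \eqref{eq:cyc_gen}, $\phi_m\phi_{pm}=\phi_m(x^p)$, requires $p\nmid m$, so it cannot be used to ``collapse adjacent factors $\phi_{p^i}\phi_{p^{i+1}}$'' for $i\ge 1$; that telescoping fails. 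The ``cleanest route'' you also mention is the one that actually closes the gap: write
\[
\phi_p\prod_{\substack{i=1\\ i\neq j}}^{k}\phi_{p^i}
\;=\;\phi_p(x)\cdot\frac{x^{p^{j-1}}-1}{x-1}\cdot\frac{x^{p^k}-1}{x^{p^j}-1},
\]
and note that each of the three factors is a sum of monomials with coefficient $1$, so the product, and hence the size-$p^k$ solution, has non-negative coefficients for every $j$. With that correction your argument is complete and coincides with (indeed slightly strengthens) the paper's.
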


\begin{proof}
The frequency polynomial in this case is 
\[F(x)={x(x^p-1)\over x-1}{x(x^{p^k}-1)\over x-1}=x^2\phi_p^2\phi_{p^2}\phi_{p^3}\cdots \phi_{p^k}.\]

Recall that $\phi_{p^i}(1)=p$ for all $i\geq 1$ and has non-negative coefficients. Thus, there are $k$ different choices one can make for the dice of size $p$, which determines the dice of size $p^k$.
\end{proof}

\section{Revisiting dice with size $p^k$ }\label{sec:prime_power}

In this section, we wish to refine \cite[Theorem 10]{gc}. We restate this result and include its proof here.

\begin{theorem}
There are exactly $\displaystyle {2k-1\choose k-1}$ solution dice of size $p^k$ for all positive integers $k$ and all prime numbers $p$.
\end{theorem}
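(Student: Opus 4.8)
The plan is to count the valid factorizations of the frequency polynomial directly. For two dice of size $p^k$, the frequency polynomial is
\[
\left(\frac{x(x^{p^k}-1)}{x-1}\right)^2 = x^2\,\phi_p^2\,\phi_{p^2}^2\cdots\phi_{p^k}^2 .
\]
Since $\phi_{p^i}(1)=p$ for $1\le i\le k$ and these are the only cyclotomic factors with value greater than $1$ at $x=1$, any solution polynomial $P$ with $P(1)=p^k$ must have the form $P = x\,\phi_p^{c_1}\phi_{p^2}^{c_2}\cdots\phi_{p^k}^{c_k}$ with $\sum_i c_i = k$ and $0\le c_i\le 2$. So the count of solution dice is at most the number of integer vectors $(c_1,\dots,c_k)$ with $0\le c_i\le 2$ and $\sum c_i = k$.

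The key step is then twofold: first, show every such vector actually yields a polynomial with non-negative coefficients (hence is a genuine solution), and second, evaluate the count. For the non-negativity, I would use the identity \eqref{eq:cyc_gen}, $\phi_m\phi_{pm}=\phi_m(x^p)$, repeatedly — or more directly the special case $\phi_{p^i}\phi_{p^{i+1}}=\phi_{p^i}(x^p)$, and telescoping products of these — to rewrite any product $\prod \phi_{p^i}^{c_i}$ with $c_i\in\{0,1,2\}$ as a product of polynomials of the form $\phi_{p^j}(x^{p^\ell})$ and leftover $\phi_{p^i}$ factors, each of which is a polynomial with non-negative (indeed $0/1$) coefficients; a product of such is again non-negative. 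One clean way: pair up the "2" entries with adjacent entries greedily, since $\sum c_i = k$ forces a structure one can induct on. For the count, the number of $(c_1,\dots,c_k)\in\{0,1,2\}^k$ with sum $k$ is the coefficient of $y^k$ in $(1+y+y^2)^k$; this equals the central Delannoy-type coefficient which, via the substitution/reflection argument, equals $\binom{2k-1}{k-1}$. Concretely, $[y^k](1+y+y^2)^k = \sum_j (-1)^j\binom{k}{j}\binom{2k-1-2j}{k-1}$, but it is cleaner to argue bijectively: a vector with entries in $\{0,1,2\}$ summing to $k$ corresponds to placing $k$ indistinguishable balls into $k$ boxes with capacity $2$; complementarily $c_i\mapsto 2-c_i$ is an involution, and one can set up a bijection with lattice paths or with weak compositions to land on $\binom{2k-1}{k-1}$. (The paper evidently wants to "refine" Theorem 10, so presumably it reproves the count $\binom{2k-1}{k-1}$ and additionally pins down exactly which vectors arise, but the statement as given is just the count.)

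The main obstacle I anticipate is the non-negativity verification: unlike the $p^2q$ and $pqr$ cases where some exponent vectors genuinely fail, here I expect \emph{all} vectors with $\sum c_i=k$, $c_i\le 2$ to work, and the challenge is giving a uniform argument rather than a case analysis. The cleanest route is probably induction on $k$: given $(c_1,\dots,c_k)$ summing to $k$ with entries $\le 2$, if $c_1\le 1$ then $c_1\in\{0,1\}$ and peeling off $\phi_p^{c_1}$ (a non-negative polynomial) leaves a product in the variables $\phi_{p^2},\dots,\phi_{p^k}$, which after the substitution $x\mapsto x^{1/p}$ is the analogous problem for size $p^{k-1}$ with the shifted exponent vector $(c_2,\dots,c_k)$ summing to $k - c_1 \ge k-1$ — here one has to be careful that the sum is only $k-c_1$, not $k-1$, so a slightly more flexible inductive hypothesis (allowing $\sum c_i$ to range, with the right upper/lower bounds) is needed. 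Alternatively, use Lemma~\ref{lem:product_piq}-style telescoping: $\phi_{p^a}\phi_{p^{a+1}}\cdots\phi_{p^b} = \phi_p(x^{p^{a-1}})\big/\phi_p(x^{p^{0}})\cdots$ collapses nicely, and one shows any admissible exponent vector can be written as a sum of "intervals of 1's" (corresponding to such telescoping products, each non-negative) — this is essentially the statement that a $0/1/2$ vector is a sum of two $0/1$ vectors, which is immediate, and each $0/1$ vector gives a product $\prod_{i\in S}\phi_{p^i}$ that is non-negative because consecutive blocks telescope and non-consecutive blocks just multiply. Once non-negativity is in hand, the enumeration $\binom{2k-1}{k-1}$ follows from a standard generating-function or lattice-path computation.
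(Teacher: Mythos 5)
Your approach misreads the statement in a way that makes the numbers fail to match. Theorem 10 of Gallian and Rusin (which this theorem restates) counts \emph{solution dice} --- single dice of size $p^k$ that can be part of \emph{some} collection of $n$ dice matching $n$ standard dice --- and $n$ is not fixed at $2$. Consequently there is no upper bound $c_i\le 2$ on the exponents; a candidate is any $P=x\prod_{i=1}^k\phi_{p^i}^{c_i}$ with $c_i\ge 0$ and $\sum_{i=1}^k c_i=k$ (the product automatically has nonnegative coefficients since each $\phi_{p^i}(x)=1+x^{p^{i-1}}+\cdots+x^{(p-1)p^{i-1}}$ does), and stars-and-bars gives $\binom{k+k-1}{k-1}=\binom{2k-1}{k-1}$. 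That is the paper's entire proof.

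Your version imposes $c_i\le 2$ (reading it as the two-dice case) and then asserts $[y^k](1+y+y^2)^k=\binom{2k-1}{k-1}$. This identity is false for $k\ge 3$: for $k=3$ the central trinomial coefficient is $7$ while $\binom{5}{2}=10$; for $k=4$ it is $19$ versus $\binom{7}{3}=35$. The entire point of Section~\ref{sec:prime_power}, and of Table~\ref{tab:sols}, is to \emph{contrast} these two counts: the central trinomial coefficient $\sum_i\binom{k}{i}\binom{k-i}{i}$ is the correct count for exactly two dice of size $p^k$ (Proposition~\ref{prop:josh}), and it is strictly smaller than $\binom{2k-1}{k-1}$ once $k>2$. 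So as written you have sketched a proof of the two-dice proposition, not the stated theorem, and the final step equating the two counts is arithmetically wrong. The non-negativity machinery you set up (telescoping $\phi_{p^i}\phi_{p^{i+1}}$, decomposing $\{0,1,2\}$-vectors as sums of two $\{0,1\}$-vectors) is also unnecessary here: any product of the polynomials $\phi_{p^i}$ has nonnegative coefficients because each factor does and there is no subtraction, so the obstruction that bites in the $p^2q$ and $pqr$ cases simply never arises.
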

\begin{proof}
Consider a polynomial of the form 
\[P(x)=x\prod_{1\leq i\leq k}\phi_{p^i}(x)^{c_i}.\]
This polynomial is a solution so long as $P(1)=p^k$, and since $\phi_{p^i}(1)=p$ for all $i\geq 1$, our solutions are in bijection with non-negative integer solutions to 
\[ \sum_{i=1}^kc_i=k,\]
of which there are $\displaystyle {2k-1\choose k-1}$.
\end{proof}

As a follow up to this result, we ask the following.
\begin{question}
Given $n$ standard dice of size $p^k$, how many other solution dice are there?
\end{question}

The reason this refines the question is because it is not necessarily the case that each of the aforementioned ${2k-1\choose k-1}$ solutions would be a solution for all possible values of $n$. In fact, the only time we can guarantee that all of these are solution dice is if $n\geq k$, as the above Theorem assumes that we always have enough dice available for a dice to be considered a solution.\footnote{This is a notion also explored by \cite{gc}. They call the \textit{game size} of a dice to be the smallest number of dice needed for a given dice to be considered a solution. This notion was not brought up in the discussion surrounding \cite[Theorem 10]{gc}, leaving some room for further exploration.} Indeed, if $n<k$, not all proposed solutions may be able to be combined with $n-1$ other dice to give the same frequency distribution. This is demonstrated by the following Proposition, for the case where $n=2$.

\begin{proposition}\label{prop:josh}
  For 2 dice of size $p^k$, where $p$ is prime, there are 
  $$\sum^{\left\lfloor \frac{k}{2} \right\rfloor}_{i=0}{\binom{k}{i}\binom{k-i}{i}}$$ solutions.
\end{proposition}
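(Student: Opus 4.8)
The plan is to count factorizations of the frequency polynomial into two valid solution dice, but now keeping track of the constraint that \emph{both} factors must have non-negative coefficients when only two dice are available. As in the proof of the preceding theorem, the frequency polynomial for two standard dice of size $p^k$ is
\[
F(x) = \left(\frac{x(x^{p^k}-1)}{x-1}\right)^2 = x^2\,\phi_p^2\,\phi_{p^2}^2\cdots \phi_{p^k}^2 ,
\]
using identity \eqref{eq:prod_cyc}. A candidate solution dice has the form $P(x) = x\prod_{i=1}^k \phi_{p^i}(x)^{c_i}$ with $0 \le c_i \le 2$ and $\sum_{i=1}^k c_i = k$ (the last equality because each $\phi_{p^i}(1) = p$ by \eqref{eq:eval_prime_power} and we need $P(1) = p^k$); its partner is then $x\prod_{i=1}^k \phi_{p^i}(x)^{2-c_i}$. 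So the pairs of dice are in bijection with pairs $(c_1,\dots,c_k)$ and $(2-c_1,\dots,2-c_k)$, and the number of \emph{solutions} (counting each dice, as in Theorem \ref{thm:p2q} and Theorem \ref{thm:pqr}) is the number of vectors $(c_i) \in \{0,1,2\}^k$ with $\sum c_i = k$ for which \emph{both} $P$ and its partner have non-negative coefficients.

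The key step is to determine exactly which exponent vectors $(c_i)$ yield a polynomial with non-negative coefficients. I would first show that if some $c_i = 0$, then $P$ fails to be a polynomial at all (it is a rational function with a genuine pole), or more precisely that a $\phi_{p^i}$ missing from $P$ must be compensated — the cleaner route is: since $\phi_{p^i} \mid F/$(anything with $c_j \le 2$), a factor $\phi_{p^i}$ with $c_i = 0$ in $P$ forces $c_i = 2$ in the partner, which is fine, but the issue is whether $P$ itself, with a ``gap,'' stays non-negative. Here I would use the telescoping identity from Lemma \ref{lem:product_piq} in its prime-power special case, namely $\prod_{i=0}^{j}\phi_{p^i}(x) = \phi_p(x^{p^{j}})$ has non-negative coefficients, together with $\phi_{p^i}(x) = \phi_p(x^{p^{i-1}})$. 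The claim I expect to prove is: $P(x) = x\prod \phi_{p^i}^{c_i}$ has non-negative coefficients if and only if the vector $(c_i)$ is ``balanced'' in the sense that its partial sums never let a $0$ sit isolated — concretely, I conjecture the condition is that the multiset of indices with $c_i = 2$ can be matched with the indices with $c_i = 0$ so that each $2$ at position $i$ is ``paired'' with a $0$ at a position $> i$ (so the excess $\phi_{p^i}^2$ can absorb a future missing factor via $\phi_{p^{i}}(x)\cdot\phi_{p^{i+1}}(x) = \phi_{p^i}(x^p)$, iterating \eqref{eq:cyc_gen}).

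Granting that characterization, the count becomes combinatorial. Write $a$ = number of indices with $c_i = 2$; since $\sum c_i = k$ and each remaining index contributes $0$ or $1$, there must be exactly $a$ indices with $c_i = 0$ and $k - 2a$ indices with $c_i = 1$. The non-negativity condition on $P$ requires the $2$'s and $0$'s to interleave in the matched way described above; the condition on the partner (exponents $2 - c_i$, which swaps the roles of $2$'s and $0$'s) requires the \emph{same} interleaving from the other side — and the upshot is that for each choice of which $k - 2a$ positions carry a $1$, and then, among the remaining $2a$ positions, the $2$'s and $0$'s are forced into the unique valid alternating pattern (2 then 0, 2 then 0, \dots), giving exactly $\binom{k}{k-2a}\binom{?}{?}$ — I would reorganize this as: choose the $i$ positions for the $2$'s out of $k$, then the $i$ positions for the $0$'s out of the remaining $k - i$, with no further freedom once the ``nesting/alternating'' constraint pins down which-is-which, but actually the constraint is automatically satisfied by \emph{every} such choice once we observe $P$ and its partner symmetry, yielding $\sum_{i=0}^{\lfloor k/2\rfloor}\binom{k}{i}\binom{k-i}{i}$, where $i = a$ runs over the possible number of $2$'s (equivalently $0$'s), $\binom{k}{i}$ picks the $0$-positions, and $\binom{k-i}{i}$ picks the $2$-positions from what remains. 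The main obstacle I anticipate is proving the non-negativity characterization cleanly — showing both that the ``alternating/nested'' vectors genuinely give non-negative polynomials (via repeated application of \eqref{eq:cyc_gen} and Lemma \ref{lem:product_piq} to collapse $\phi_{p^i}^2\cdots\phi_{p^j}^0$ chunks into honest polynomials like $\phi_p(x^{p^{j-1}})\cdot(\text{stuff})$) and that any \emph{non}-nested vector produces a negative coefficient (presumably by exhibiting a small offending coefficient, in the spirit of Lemma \ref{lem:shriram} and the $A_{\bullet}$ lemmas of Sections \ref{sec:p2q}–\ref{sec:pqr}, e.g. looking at the coefficient of $x$ times an appropriate power). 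Once that dichotomy is in hand, the generating-function / lattice-path bookkeeping that turns it into $\sum_{i=0}^{\lfloor k/2\rfloor}\binom{k}{i}\binom{k-i}{i}$ is routine.
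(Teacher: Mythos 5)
Your counting framework is the right one — the paper, too, reduces the problem to counting tuples $(c_1,\dots,c_k)\in\{0,1,2\}^k$ with $\sum c_i = k$, and the final combinatorial bookkeeping (choose $i$ positions for the $0$'s, then $i$ of the remaining $k-i$ for the $2$'s) matches the paper exactly. However, there is a genuine gap in the middle of your argument: you treat the non-negativity of the coefficients of $P(x) = x\prod_{i=1}^k \phi_{p^i}(x)^{c_i}$ as a serious open question, conjecture an unproved ``nesting/alternating'' constraint, and leave the dichotomy unresolved. In fact, in the prime-power case there is \emph{no} non-negativity constraint at all: by identity \eqref{eq:cyc_prime_power_mult}, $\phi_{p^i}(x) = \phi_p(x^{p^{i-1}}) = 1 + x^{p^{i-1}} + \cdots + x^{(p-1)p^{i-1}}$ already has non-negative coefficients for every $i \ge 1$, so any product of these with non-negative exponents automatically does too. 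This is exactly why the $p^k$ case is dramatically easier than the $p^2q$ and $pqr$ cases, where cyclotomic factors like $\phi_{pq}$ have genuinely negative coefficients and one must fight for non-negativity via Lemma \ref{lem:shriram}. Your conjectural characterization, if it imposed any real restriction, would produce a count strictly \emph{smaller} than $\sum_{i=0}^{\lfloor k/2\rfloor}\binom{k}{i}\binom{k-i}{i}$, contradicting the stated answer, so it cannot be the right road.

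A secondary error: you claim that if some $c_i = 0$ then $P$ ``fails to be a polynomial at all (it is a rational function with a genuine pole).'' This is not so — $P$ is defined as a finite product of the polynomials $\phi_{p^j}$ (raised to non-negative powers), so it is a polynomial regardless of which $c_i$ vanish; a missing cyclotomic factor simply means that factor is absent, not that it sits in a denominator. The only constraints are $0 \le c_i \le 2$ (from the supply of two dice), $c_0 = 1$ for the leading $x$, and $\sum c_i = k$ (from $P(1) = p^k$ and $\phi_{p^i}(1)=p$), after which non-negativity is free and the count is immediate.
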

\begin{proof}
  The frequency polynomial for 2 dice of size $p^k$ is 
\begin{align*}
\left(\frac{x(x^{p^k}-1)}{x-1}\right)^2 = x^2 \cdot \phi_{p}^2 \cdot \phi_{p^2}^2 \dots \phi_{p^k}^2.
\end{align*}  
  All solutions will have the form $x^{c_1} \cdot \phi_{p}^{c_2} \cdot \phi_{p^2}^{c_3} \cdots \phi_{p^k}^{c_{k+1}}$.
  We know $c_1 = 1$ since every solution must have a single factor of $x$ because each die is required to have that minimum on one of its sides. Any solution with generating function $P(x)$ satisfies $P(1) = p^k$ since $p^k$ is the size of the two dice which we are trying to find more solutions for. Thus, the solutions correspond to $c_2 + c_3 + \cdots+ c_{k+1} = k$, assuming that $c_i \leq 2$ since we only have two dice in this case. Thus, solutions are in bijection with tuples in $\{0,1,2\}^k$ whose components sum to $k$. Given $i\leq \lfloor {k\over 2}\rfloor$, there are ${k\choose i}$ ways of choosing where the $2$'s appears in this tuple. From there, we must have $k-2i$ positions with a $1$. There are ${k-i\choose k-2i}$ choices for where to place the $1$'s. The remaining spots must be $0$'s. Thus, we've shown the number of solutions is precisely the sum give in the statement.
%
%
\end{proof}

Table \ref{tab:sols} compares the number of solutions for the $2$ dice case versus those given in \cite[Theorem 10]{gc}.

\begin{table}[h]
\begin{tabular}{c | c | c}
$k$ & $\displaystyle \sum^{\left\lfloor \frac{k}{2} \right\rfloor}_{i=0}{\binom{k}{i}\binom{k-i}{i}}$ & $\displaystyle {2k-1\choose k-1}$\\ \hline

$1$ & $1$ & $1$ \\ \hline
$2$ & $3$ & $3$ \\ \hline
$3$ & $7$ & $10$ \\ \hline
$4$ & $19 $ & $35$ \\ \hline
$5$ & $51$ & $126$

\end{tabular}
\caption{Comparing the number of actual solutions for two dice with size $p^k$ with \cite[Theorem 10]{gc}. Observe the values disagree if and only if $k>2$.}\label{tab:sols}
\end{table}

We can generalize this beyond the case where $n=2$, though we do not necessarily get a concise formula as we did above.

\begin{proposition}
Suppose we have $n$ dice of size $p^k$. If $n<k$, then the number of solutions is precisely
\[[x^k](1+x+x^2+\cdots+x^n)^k.\]
\end{proposition}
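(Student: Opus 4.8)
The plan is to mimic the proof of Proposition \ref{prop:josh} verbatim, replacing the bound $c_i\leq 2$ coming from ``two dice'' with the bound $c_i\leq n$ coming from ``$n$ dice''. First I would write down the frequency polynomial for $n$ standard dice of size $p^k$, namely
\[
\left(\frac{x(x^{p^k}-1)}{x-1}\right)^n = x^n\,\phi_p^n\,\phi_{p^2}^n\cdots\phi_{p^k}^n,
\]
using identity \eqref{eq:cyc_prime}. As in the earlier propositions, any solution die has generating function of the form $x\,\phi_p^{c_1}\phi_{p^2}^{c_2}\cdots\phi_{p^k}^{c_k}$: the single factor of $x$ is forced because each die must have $1$ as its minimal label, and each $\phi_{p^i}$ has nonnegative coefficients, so every such product genuinely corresponds to a die.

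The key step is the counting. Since $\phi_{p^i}(1)=p$ for all $i\ge 1$ by \eqref{eq:eval_prime_power}, the condition $P(1)=p^k$ becomes $c_1+c_2+\cdots+c_k=k$. The constraint $0\le c_i\le n$ arises because each $\phi_{p^i}$ appears with exponent $n$ in the frequency polynomial, so no single solution die can use more than $n$ copies of it. Hence solutions are in bijection with tuples $(c_1,\dots,c_k)\in\{0,1,\dots,n\}^k$ summing to $k$, and the number of such tuples is exactly the coefficient extraction
\[
[x^k](1+x+x^2+\cdots+x^n)^k,
\]
since each factor $1+x+\cdots+x^n$ records the choices for one coordinate $c_i$ and the power of $x$ tracks the running sum. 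This gives the claimed count.

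The only point needing care — and the main (mild) obstacle — is justifying why the hypothesis $n<k$ appears in the statement at all, given that the bijection argument above works for any $n$. The reason is that when $n\ge k$, the upper bound $c_i\le n$ is vacuous: since $\sum c_i=k$ and each $c_i\ge 0$, automatically $c_i\le k\le n$, so the count collapses to $[x^k](1+x+\cdots)^k=\binom{2k-1}{k-1}$, recovering \cite[Theorem 10]{gc}. Thus the formula is correct for all $n$, but it is only informative (i.e.\ differs from the ``enough dice'' count) precisely when $n<k$, which is why the hypothesis is stated that way. I would include a sentence making this explicit so the reader sees the formula specializes correctly, and note that for $n=2$ it reduces, via the standard identity $[x^k](1+x+x^2)^k=\sum_{i=0}^{\lfloor k/2\rfloor}\binom{k}{i}\binom{k-i}{i}$, to Proposition \ref{prop:josh}.
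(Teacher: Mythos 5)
Your proof takes essentially the same approach as the paper's: write the candidate solution as $x\,\phi_p^{c_1}\cdots\phi_{p^k}^{c_k}$, use $\phi_{p^i}(1)=p$ to reduce the constraint $P(1)=p^k$ to $c_1+\cdots+c_k=k$ with $0\le c_i\le n$, and recognize the count as $[x^k](1+x+\cdots+x^n)^k$. The added commentary on why $c_i\le n$ is forced by the exponents in the frequency polynomial, and why the hypothesis $n<k$ is exactly the regime where that bound is non-vacuous, is correct and a useful supplement, but the core bijection argument is the same as the paper's.
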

\begin{proof}
As before, we consider possible choices of $c_1,\dots, c_k$ so that \[x\prod_{i=1}^k \left(\phi_{p^i}(x)\right)^{c_i}\] is a solution. Recall that $\phi_{p^i}(1)=p$ for all $i\geq 1$, so the solutions are in bijections with non-negative solutions to \[c_1+c_2+\cdots+c_k=k\]
where $c_i\leq n$. Such solutions are enumerated by the statement of this proposition.
\end{proof}

This provides an alternative proof of Proposition \ref{prop:josh}.
\begin{corollary}
The number of solutions for $2$ dice of size $p^k$ is 
\[\sum_{i=0}^{\left \lfloor {k\over 2}\right\rfloor}{k\choose i}{k-i\choose i}.\]
\end{corollary}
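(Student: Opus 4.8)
The plan is to derive this directly from the immediately preceding Proposition, specializing $n=2$ and then identifying the resulting coefficient extraction with the stated binomial sum. Since the preceding Proposition gives that the number of solutions for $n$ dice of size $p^k$ (with $n<k$) equals $[x^k](1+x+\cdots+x^n)^k$, setting $n=2$ yields that the number of solutions for two dice of size $p^k$ is $[x^k](1+x+x^2)^k$. (One should note that the $n<k$ hypothesis is immaterial here: when $n\geq k$ the constraint $c_i\leq n$ is vacuous and the count $[x^k](1+\cdots+x^n)^k$ still correctly enumerates compositions of $k$ into $k$ parts each at most $n$; in particular the small cases $k=1,2$ agree with the table, so the formula holds for all $k$.)

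The single computational step is to expand $(1+x+x^2)^k$ and extract the coefficient of $x^k$. I would do this by the multinomial theorem: a term contributing to $x^k$ is obtained by choosing, among the $k$ factors, some set of $i$ factors from which we take $x^2$, some set of $j$ factors from which we take $x$, and the remaining $k-i-j$ factors from which we take $1$, subject to $2i+j=k$. The number of such terms is the multinomial coefficient $\binom{k}{i,\,j,\,k-i-j}=\binom{k}{i}\binom{k-i}{j}$. Substituting $j=k-2i$ gives $\binom{k}{i}\binom{k-i}{k-2i}=\binom{k}{i}\binom{k-i}{i}$, and the constraint that $i,j\geq 0$ forces $0\leq i\leq \lfloor k/2\rfloor$. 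Summing over all valid $i$ gives exactly
\[
[x^k](1+x+x^2)^k=\sum_{i=0}^{\lfloor k/2\rfloor}\binom{k}{i}\binom{k-i}{i},
\]
which is the claimed count.

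There is essentially no obstacle here: the corollary is a routine specialization plus a standard multinomial identity, and indeed the paper has already proved the same formula independently in Proposition \ref{prop:josh} by a direct combinatorial argument (choosing positions for the $2$'s and then the $1$'s among the exponent tuple). The only thing worth being careful about is the bookkeeping of which combinatorial object the coefficient $[x^k](1+x+x^2)^k$ counts — namely length-$k$ tuples over $\{0,1,2\}$ summing to $k$ — so that the reader sees this corollary and Proposition \ref{prop:josh} are genuinely the same count viewed through two lenses (generating-function coefficient extraction versus direct enumeration). Accordingly I would keep the proof to two or three sentences: invoke the preceding Proposition with $n=2$, expand via the multinomial theorem, and collect terms by the number $i$ of factors contributing $x^2$.
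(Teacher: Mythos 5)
Your proof is correct and follows essentially the paper's own route: both invoke the preceding Proposition at $n=2$ to reduce the claim to computing $[x^k](1+x+x^2)^k$, and both then expand the trinomial power. The only cosmetic difference is that the paper expands by applying the binomial theorem twice (first grouping $(1+x)+x^2$, then expanding $(1+x)^{k-i}$) whereas you apply the multinomial theorem in a single step; these produce identical multinomial coefficients and are the same computation. One small thing you handle more carefully than the paper does: you note explicitly that the hypothesis $n<k$ in the preceding Proposition is immaterial at $n=2$, since for $n\geq k$ the bound $c_i\leq n$ is vacuous and the coefficient $[x^k](1+\cdots+x^n)^k$ still correctly counts compositions of $k$ into $k$ parts from $\{0,\dots,n\}$; the paper applies the Proposition to all $k$ without comment, so your remark quietly closes that gap.
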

\begin{proof}
Using the Binomial Theorem, we have
\begin{align*}
(1+x+x^2)^k&=\sum_{i=0}^k{k\choose i}(1+x)^{k-i}x^{2i}\\
&=\sum_{i=0}^k{k\choose i}x^{2i}\sum_{j=0}^{k-i}{k-i\choose j} x^{k-i-j}\\
&=\sum_{i=0}^k\sum_{j=0}^i{k\choose i}{k-i\choose j} x^{k+i-j}
\end{align*}

Thus, the coefficient of $[x^k]$ for this polynomial is found when $i=j$, yielding
\[\sum_{i=0}^k{k\choose i}{k-i\choose i}.\]
Note the terms with $i>\left\lfloor{k\over 2}\right\rfloor$
are all $0$ due to the second binomial.
\end{proof}

\begin{remark}
The sum given in the prior Corollary is referred to as the ``central trinomial coefficient" since is the middle term of $(1+x+x^2)^k$.
\end{remark}

\appendix

\section{Rational Function Simplifications}

In this section, we provide work to help justify the simplification of certain products of cyclotomic polynomials given in sections \ref{sec:p2q} and \ref{sec:pqr}. 

\subsection{Computations for Section \ref{sec:p2q}}\label{sec:comp_p2q}

\begin{align*}
A_{1102}&=\phi_q\phi_p\phi_{p^2}\phi_{p^2q}^2\\
&={1-x^q\over 1-x}{1-x^p\over 1-x}{1-x^{p^2}\over 1-x^p}{(1-x^p)^2(1-x^{p^2q})^2\over (1-x^{p^2})^2(1-x^{pq})^2}\\
&=(1-x^q)(1-x^p)^2(1-x^{p^2q})^2\left(\displaystyle \sum_{i=0}^\infty  x^i\right)^2\left(\displaystyle \sum_{i=0}^\infty  x^{ip^2}\right)\left(\displaystyle \sum_{i=0}^\infty  x^{ipq}\right)^2\\ \\ \\ \\
A_{2022}&=\phi_q\phi_p^2\phi_{pq}^2\phi_{p^2q}^2\\
&={1-x^q\over 1-x}{(1-x^p)^2\over (1-x)^2}{(1-x^{pq})^2(1-x)^2\over (1-x^p)^2(1-x^q)^2}{(1-x^p)^2(1-x^{p^2q})^2\over (1-x^{p^2})^2(1-x^{pq})^2}\\
&=(1-x^p)^2(1-x^{p^2q})^2\left(\displaystyle \sum_{i=0}^\infty  x^i\right)\left(\displaystyle \sum_{i=0}^\infty  x^{iq}\right)\left(\displaystyle \sum_{i=0}^\infty  x^{ip^2}\right)^2\\ \\ \\ \\
A_{2012}&=\phi_q\phi_p^2\phi_{pq}\phi_{p^2q}^2\\
&={1-x^q\over 1-x}{(1-x^p)^2\over (1-x)^2}{(1-x^{pq})(1-x)\over (1-x^p)(1-x^q)}{(1-x^p)^2(1-x^{p^2q})^2\over (1-x^{p^2})^2(1-x^{pq})^2}\\
&=(1-x^p)^3(1-x^{p^2q})^2\left(\displaystyle \sum_{i=0}^\infty  x^i\right)^2\left(\displaystyle \sum_{i=0}^\infty  x^{ip^2}\right)^2\left(\displaystyle \sum_{i=0}^\infty  x^{ipq}\right)\\ \\
\end{align*}

\begin{align*}
\end{align*}

\subsection{Computations for Section \ref{sec:pqr}}\label{sec:comp_pqr}
{\footnotesize
\begin{align*}
A_{0222}&=\phi_p\phi_q\phi_r\phi_{pr}^2\phi_{qr}^2\phi_{pqr}^2\\
&={1-x^p\over 1-x}{1-x^q\over 1-x}{1-x^r\over 1-x}{(1-x^{pr})^2(1-x)^2\over (1-x^p)^2(1-x^r)^2}{(1-x^{qr})^2(1-x)^2\over (1-x^q)^2(1-x^r)^2}{(1-x^{pqr})^2(1-x^p)^2(1-x^q)^2(1-x^r)^2 \over (1-x^{pq})^2(1-x^{pr})^2(1-x^{qr})^2(1-x)^2}\\
&=(1-x^p)(1-x^q)(1-x^{pqr})^2\left(\displaystyle \sum_{i=0}^\infty  x^{i}\right)\left(\displaystyle \sum_{i=0}^\infty  x^{ir}\right)\left(\displaystyle \sum_{i=0}^\infty  x^{ipq}\right)^2\\ \\  \\ \\
A_{0122}&=\phi_p\phi_q\phi_r\phi_{pr}\phi_{qr}^2\phi_{pqr}^2\\
&={1-x^p\over 1-x}{1-x^q\over 1-x}{1-x^r\over 1-x}{(1-x^{pr})(1-x)\over (1-x^p)(1-x^r)}{(1-x^{qr})^2(1-x)^2\over (1-x^q)^2(1-x^r)^2}{(1-x^{pqr})^2(1-x^p)^2(1-x^q)^2(1-x^r)^2 \over (1-x^{pq})^2(1-x^{pr})^2(1-x^{qr})^2(1-x)^2}\\
&=(1-x^p)^2(1-x^q)(1-x^{pqr})^2\left(\displaystyle \sum_{i=0}^\infty  x^{i}\right)^2\left(\displaystyle \sum_{i=0}^\infty  x^{ipr}\right)\left(\displaystyle \sum_{i=0}^\infty  x^{ipq}\right)^2\\ \\ \\ \\
A_{2001}&=\phi_p\phi_q\phi_r\phi_{pq}^2\phi_{pqr}\\
&={1-x^p\over 1-x}{1-x^q\over 1-x}{1-x^r\over 1-x}{(1-x^{pq})^2(1-x)^2\over (1-x^p)^2(1-x^q)^2}{(1-x^{pqr})(1-x^p)(1-x^q)(1-x^r) \over (1-x^{pq})(1-x^{pr})(1-x^{qr})(1-x)}\\
&=(1-x^r)^2(1-x^{pq})(1-x^{pqr})\left(\displaystyle \sum_{i=0}^\infty  x^{i}\right)^2\left(\displaystyle \sum_{i=0}^\infty  x^{ipr}\right)\left(\displaystyle \sum_{i=0}^\infty  x^{iqr}\right)\\ \\ \\ \\
A_{1112}&=\phi_p\phi_q\phi_r\phi_{pq}\phi_{pr}\phi_{qr}\phi_{pqr}^2\\
&={1-x^p\over 1-x}{1-x^q\over 1-x}{1-x^r\over 1-x}{(1-x^{pq})(1-x)\over (1-x^p)(1-x^q)}{(1-x^{pr})(1-x)\over (1-x^p)(1-x^r)}{(1-x^{qr})(1-x)\over (1-x^q)(1-x^r)}{(1-x^{pqr})^2(1-x^p)^2(1-x^q)^2(1-x^r)^2 \over (1-x^{pq})^2(1-x^{pr})^2(1-x^{qr})^2(1-x)^2}\\
&=(1-x^p)(1-x^q)(1-x^r)(1-x^{pqr})^2\left(\displaystyle \sum_{i=0}^\infty  x^{i}\right)^2\left(\displaystyle \sum_{i=0}^\infty  x^{ipq}\right)\left(\displaystyle \sum_{i=0}^\infty  x^{ipr}\right)\left(\displaystyle \sum_{i=0}^\infty  x^{iqr}\right)
\end{align*}
}

\bibliographystyle{plain} 
\bibliography{ref[1]}

\begin{thebibliography}{10}

\bibitem{bmrs}
Frank Bermudez, Anthony Medina, Amber Rosin, and Eren Scott.
\newblock Are stupid dice necessary?
\newblock {\em The College Mathematics Journal}, 44(4):315--322, 2013.

\bibitem{b}
Duane~M Broline.
\newblock Renumbering of the faces of dice.
\newblock {\em Mathematics Magazine}, 52(5):312--315, 1979.

\bibitem{bs}
Barry~W Brunson and Randall~J Swift.
\newblock Equally likely sums.
\newblock {\em Mathematical Spectrum}, 30:34--36, 1998.

\bibitem{fs}
Brian~C Fowler and Randall~J Swift.
\newblock Relabeling dice.
\newblock {\em The College Mathematics Journal}, 30(3):204--208, 1999.

\bibitem{gc}
Joseph~A Gallian and David~J Rusin.
\newblock Cyclotomic polynomials and nonstandard dice.
\newblock {\em Discrete Mathematics}, 27(3):245--259, 1979.

\bibitem{gardner}
Martin Gardner.
\newblock Mathematical games.
\newblock {\em Scientific American}, 222(6):132--140, 1970.

\bibitem{lr}
Alec Lewald and Amber Rosin.
\newblock Equally likely dice sums.
\newblock {\em Mathematical Scientist}, 43(2), 2018.

\bibitem{m}
Ian Morrison.
\newblock Sacks of dice with fair totals.
\newblock {\em arXiv preprint arXiv:1411.2272}, 2014.

\bibitem{rss}
Amber Rosin, Mary Sharobiem, and Randall Swift.
\newblock Dice sums.
\newblock {\em Mathematical Scientist}, 33(2):99 -- 109, 2008.

\bibitem{sm}
Zhizhang Shen and Christian~M Marston.
\newblock A study of a dice problem.
\newblock {\em Applied mathematics and computation}, 73(2-3):231--247, 1995.

\end{thebibliography}


\end{document}